\documentclass[11pt,epsf]{article}
\usepackage{graphicx}
\usepackage{amsthm}
\usepackage{amsfonts}
\usepackage{amssymb}
\title{Some analytic properties of the partial theta function}
\author{Vladimir Petrov Kostov\\ 
Universit\'e C\^ote d'Azur, CNRS, LJAD, France\\   
e-mail: vladimir.kostov@unice.fr} 
\date{}
\newtheorem{tm}{Theorem}
\newtheorem{defi}[tm]{Definition}
\newtheorem{rem}[tm]{Remark}
\newtheorem{rems}[tm]{Remarks}
\newtheorem{lm}[tm]{Lemma}

\newtheorem{prop}[tm]{Proposition}
\newtheorem{nota}[tm]{Notation}
\begin{document}
\maketitle
\begin{abstract}
  We prove new properties of the zero set of
  Ramanujan's partial theta function
  $\theta (q,x):=\sum _{j=0}^{\infty}q^{j(j+1)/2}x^j$, $q\in (-1,0)\cup (0,1)$,
  $x\in \mathbb{R}$. We show that for each $q\in (0,1)$, there exists a line
  Re$x=-a$, $a\geq 5$, such that all real zeros of $\theta(q,.)$
  lie to its left and all complex zeros to its right. A similar property is
  proved for $q\in (-1,0)$. For $q\in (0,1)$, there are no real zeros
  $\geq -6$. For $q\in (-1,0)$, there are no negative zeros $\geq -2.4$ and
  no positive zeros $\leq 2.4$, except the smallest one. \\ 

{\bf Key words:} partial theta function, Jacobi theta function, 
Jacobi triple product\\ 

{\bf AMS classification:} 26A06~~~\, \, \, UDK:~ 517.531.55
\end{abstract}

\section{Introduction}

The present paper considers the {\em partial theta function} which is the sum
of the series
$$\theta (q,x):=\sum _{j=0}^{\infty}q^{j(j+1)/2}x^j~.$$
We treat
$q$ as a parameter and $x$ as a variable. For each $q$ fixed, $|q|<1$, the
function $\theta$ is an entire function in~$x$. The partial theta function
satisfies the functional equation 

\begin{equation}\label{equqx}
\theta (q,x)=1+qx\theta (q,qx)~.
\end{equation}
The function $\theta$  has been studied
in the complex situation, i.e., when $(q,x)\in \mathbb{C}^2$, $|q|<1$,
see \cite{KoAM}, and in the real one which can be subdivided between the cases

$$A)~~~(q,x)\in (0,1)\times \mathbb{R}~~~\, \, {\rm and}~~~\, \, B)~~~
(q,x)\in (-1,0)\times \mathbb{R}$$
(one trivially has $\theta (0,x)\equiv 1$).
We formulate and prove new analytic properties of $\theta$ in these cases and we
complete and improve certain previous results.

The partial theta function finds applications in various domains. When
pure mathematics is concerned, one should mention Ramanujan type $q$-series
(see~\cite{Wa}), asymptotic analysis (see~\cite{BeKi}) and the theory of (mock)
modular forms (see \cite{BrFoRh}). There are situations when the
function is equally interesting for mathematicians and physicists. Thus
the paper \cite{So} speaks about its role in statistical physics
and combinatorics, the articles \cite{BFM} and \cite{CMW}
explain its applications to the research of problems about asymptotics
and modularity of partial and false theta functions and their
interaction with representation theory and conformal field theory.
In \cite{WPG} this function appears in the context of
quantum many-body systems.

The famous Indian mathematician Srinivasa Ramanujan has studied
the partial theta function in his lost hotebook,
see~\cite{AnBe} and~\cite{Wa}. 
Information about Appell-Lerch sums and mock theta functions
can be found in~\cite{EM}. The link between 
$\theta$ and Artin-Tits monoids is revealed in~\cite{FGM}; Pad\'e
approximants of $\theta$ are considered in~\cite{LuSa}.
Andrews-Warnaar identities for
the partial theta function are explored in~\cite{WM}, \cite{Wei}
and~\cite{Sun}.
In \cite{Pr} one can find an explicit combinatorial interpretation of the
coefficients of the leading root of $\theta$ as a series in~$q$.

Recently, the connection of $\theta$ to section-hyperbolic polynomials has
motivated a renewed interest in its analytic properties. The mentioned
polynomials have positive coefficients, all their roots are real negative and
all their finite sections (i.~e. truncations) have also all their roots
real negative. Classical works in this domain belong to
Hardy, Petrovitch and Hutchinson (see~\cite{Ha}, \cite{Pe} and~\cite{Hu})
and this activity has been continued in the more recent papers
\cite{Ost}, \cite{KaLoVi} and \cite{KoSh}. The author of the present lines
has consecrated the articles \cite{KoFAA}--\cite{KoAnn24} to the study
of the analytic properties of $\theta$ which are interesting in their own.
In particular, pictures of the zero set of $\theta$ can be found
in~\cite{KoSe}.

\section{The new results}

\subsection{Location of the zeros of $\theta$\protect\label{subsecloc}}

We begin this subsection with results concerning the bounds for the real zeros
of $\theta (q,.)$. We remind that in case A) this function has no
non-negative zeros.

\begin{rem}\label{remKatsnelson}
  {\rm In what follows we allow in certain situations the values $1$ and $-1$ 
    for the parameter $q$. This is because a result of V.~Katsnelson,
    see \cite{Ka}, implies that the series of $\theta$ converges to $1/(1-x)$
    as $q\rightarrow 1^-$ uniformly on compact sets contained inside
    the contour $K\subset \mathbb{C}$}

  $$K~:~x=e^{t\pm it}~~~\, {\rm or~equivalently}~~~\,
  \mathbb{R}^2\ni (\xi ,\eta )=(e^t\cos t,~\pm e^t\sin t)~,~~~\,
  t\in [0,\pi ]~.$$
  {\rm The domain bounded by this contour is relevant in case A).
    It contains the unit disk
    and the segment
    $\{ y=0,~x\in [-e^{\pi},1]\}$. To
    consider also case B) we need the following equality:}
  
\begin{equation}\label{equdecomp}
    \theta (q,x)=\theta_1+qx\theta_2~,~~~\,
    \theta_1(q,x):=\theta (q^4,x^2/q)~,~~~\,
  \theta_2(q,x):=\theta (q^4,qx^2)~.\end{equation}
    {\rm The analog of the contour $K$ for $q\in (-1,0)$ is obtained by setting
      $-x^2=e^{t\pm it}$, i.~e.}

    $$K^{\bullet}~:~x=\pm e^{t/2+i(\pi \pm t)/2}~~~\, {\rm or}~~~\,
    (\xi ,\eta )=\pm (e^{t/2}\cos ((\pi \pm t)/2),~e^{t/2}\sin ((\pi \pm t)/2))~,
    ~~~\, t\in [0,\pi ]~,$$
      {\rm because as $q\rightarrow -1^+$,
        one gets $x^2q^{\pm 1}\rightarrow -x^2$. Thus for $q\rightarrow -1^+$,
        the series of $\theta$ converges to $(1-x)/(1+x^2)$ uniformly
        on compact sets inside the contour~$K^{\bullet}$. The contours $K$
      and $K^{\bullet}$ consist of $2$ and $4$ arcs of logarithmic spirals  respectively.}
\end{rem}

For the upper bound of the real zeros in case A),
the following proposition
holds true:

\begin{prop}\label{prop6}
  (1) For $q\in (0,1)$, the partial theta function has no real zeros $\geq -6$.

  (2) The partial theta function has no real zeros for
  $(q,x)\in \tilde{Q}:=[0.4,1]\times [-10.5,0]$.
  Its values on $\partial \tilde{Q}$ are $\geq 0.0049$. 
\end{prop}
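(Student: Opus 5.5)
For $x\ge 0$ there is nothing to prove, since all coefficients $q^{j(j+1)/2}$ are positive. I will derive part (1) from part (2) together with a separate estimate for small $q$. Part (2) covers $q\in[0.4,1)$ and $x\in[-10.5,0]\supset[-6,0]$. It therefore remains to treat $q\in(0,0.4)$, $x\in[-6,0]$.

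\emph{Small $q$.} Set $y=-x\in[0,6]$, so that $\theta(q,x)=\sum_j(-1)^jq^{j(j+1)/2}y^j$. The ratio of the absolute values of the $(j+1)$-st and $j$-th terms is $q^{j+1}y\le 0.4^{j+1}\cdot 6$. This is $<1$ for $j\ge 1$, since $0.4^2\cdot 6=0.96$. Hence from $j=1$ on the series is alternating with decreasing terms, and the Leibniz estimate applied after the $j=3$ term gives
$$\theta(q,x)\ \ge\ f(q,y):=1-qy+q^3y^2-q^6y^3~.$$
It then suffices to show $f>0$ on $(0,0.4]\times[0,6]$. I would substitute $u=qy\in[0,2.4]$, which gives $f=1-u+qu^2-q^3u^3$. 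For fixed $u$ this is a polynomial in $q$, so the minimum over $q$ is attained at $q=0.4$ or at a critical point where $2u^2q=... $, namely $u^2=3q^2u^3$, i.e. $q=1/\sqrt{3u}$. In the regime $u\le 1$ one has $f\ge 1-u\ge 0$ together with the positive contribution $qu^2(1-q^2u)$, which is $>0$ for $u>0$. For $u\in[1,2.4]$ I would check the two candidate values explicitly. For instance, at $q=0.4$, $u=2.4$ one gets $f\approx 0.02$, so the margin there is small. This one-variable verification is routine but delicate near the corner $(0.4,-6)$.

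\emph{Part (2).} Here $\tilde Q$ is compact and includes $q=1$, where by Remark~\ref{remKatsnelson} $\theta$ is interpreted as $1/(1-x)$. This is legitimate because $[-10.5,0]\subset[-e^{\pi},1]$ lies inside $K$, and $1/(1-x)\ge 1/11.5$ there. I would split the range of $q$ into $[0.4,1-\delta]$ and $[1-\delta,1]$.

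On $[0.4,1-\delta]\times[-10.5,0]$ I would truncate the series at an index $N$ chosen so that the tail is bounded by a geometric series. I would bound $\partial\theta/\partial q$ and $\partial\theta/\partial x$ from above on this rectangle, and then verify positivity on a sufficiently fine grid. To keep the argument smaller one can use the functional equation $\theta(q,x)=1+qx\,\theta(q,qx)$: whenever $qx$ falls into a region where $\theta$ is already known to be positive and small, this reduces the range of $x$. The boundary value $0.0049$ will come out of this computation; I expect the minimum to be near the edge $x=-10.5$ for $q$ close to $0.4$.

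The main obstacle is the neighbourhood of $q=1$. There the terms decay slowly, and Katsnelson's convergence is only qualitative. I would first try to make it quantitative using the Jacobi triple product, or the decomposition $\theta=\theta_1+qx\theta_2$ from (\ref{equdecomp}), which converts $\theta$ into series in $q^4$ and gives faster convergence. The goal is an explicit bound $|\theta(q,x)-1/(1-x)|<1/11.5$ for $q\in[1-\delta,1]$, $x\in[-10.5,0]$, for a concrete $\delta$. If that bound is too weak, I would fall back on extending the grid computation closer to $1$, with a correspondingly larger truncation index $N$.
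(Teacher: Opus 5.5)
\textbf{The gap: the neighbourhood of $q=1$.} Part (2) near $q=1$ is left unresolved, and your part (1) inherits this, since you deduce (1) from (2) for $q\in[0.4,1)$. There you only state a goal, and neither suggested route works.

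\emph{The decomposition.} For $q\in(0,1)$ and $x<0$, (\ref{equdecomp}) gives $\theta=\theta(q^4,x^2/q)-|qx|\,\theta(q^4,qx^2)$. This is a difference of two huge positive numbers with $q^4$ still near $1$, so it brings no faster convergence; it is useful only for $q<0$.

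\emph{The grid.} Pushing a grid toward $q=1$ cannot close the interval. The terms $q^{j(j+1)/2}10.5^j$ reach size about $\exp(\ln^2 10.5/(2\ln(1/q)))$ before decaying, so truncation length, cancellation and Lipschitz constants all blow up as $q\to 1^-$.

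\emph{The target bound.} An estimate on the whole strip $[1-\delta,1]\times[-10.5,0]$ is more than is needed. Moreover, near $x=0$ the pieces $\Theta^*$ and $G$ of (\ref{equthetaG}) each blow up.

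\textbf{What you are missing.} The paper reduces the problem to the boundary. Real zeros of $\theta(q,\cdot)$ move continuously and, as $q$ increases, are never born (complex pairs never become real). Since $\theta(q,0)=1$, it suffices to show $\theta>0$ on $\{q=0.4\}$ and on the single line $x=-10.5$. Near $q=1$ on that line one writes $\theta=\Theta^*-G$:
\begin{itemize}
\item $G(q,-10.5)$ is a Leibniz series, so $-G\geq 1/10.5-1/10.5^2\approx 0.086$.
\item By (\ref{equT}), $|\Theta^*|\leq\prod_{m\geq 1}|U(q^m)|$ with $U(q)=(1-q)(1-q/10.5)(1-10.5q)$.
\item Counting how many powers $q^m$ land in the intervals where $|U|>1$, versus those where $|U|$ is small, yields $\prod_m|U(q^m)|<0.0104$ for $q\in[0.87,1)$.
\end{itemize}
Part (1) is handled the same way on the line $x=-6$, independently of part (2).

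\textbf{A secondary error in the small-$q$ step.} The Leibniz reduction itself is sound: $q^2y\leq 0.96$ makes the bound $\theta\geq f$ valid. The verification of $f>0$, however, looks at the wrong points.
\begin{itemize}
\item Since $\partial f/\partial q=u^2(1-3q^2u)$, the point $q=1/\sqrt{3u}$ is a \emph{maximum} of $f(\cdot,u)$, not a minimum.
\item For fixed $u$ the admissible range is $q\in[u/6,0.4]$, not $(0,0.4]$. For $u>1$ one has $f\to 1-u<0$ as $q\to 0$, so this constraint is essential.
\item The minimum therefore lies at an endpoint. The binding one is $q=u/6$, i.e.\ $y=6$, which you omit. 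There $f(q,6)=1-6q+36q^3-216q^6$ has minimum about $0.0049$ near $q\approx 0.27$, well below the value $\approx 0.019$ at the corner $(0.4,-6)$ that you single out.
\end{itemize}
This minimum is still positive, and $\partial f/\partial y=-q(1-2q^2y+3q^5y^2)<0$ on $[0,6]$ for $q\leq 0.4$. So this step is easily repaired; the neighbourhood of $q=1$ is not.
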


Numerical computation suggests that $\theta (0.265,.)$ has a zero in the
interval $(-6.1,-6)$. In this sense part (1) of the proposition is
sufficiently sharp. The analog of Proposition~\ref{prop6}
in case B) is formulated as follows: 

\begin{prop}\label{prop2.2}
  (1) For $q\in (-1,0)$, any negative zero of $\theta (q,.)$ is $<-2.4$.
  \vspace{1mm}
  
  (2) For $q\in (-1,0)$, any positive zero of $\theta (q,.)$, except
  the smallest one, is $>2.4$.

  (3) The partial theta function has no negative zero for
  $(q,x)\in \tilde{Q}_-:=[-1,-0.75]\times [-3.1,0]$. For
  $(q,x)\in \tilde{Q}_+:=[-1,-0.8]\times [0,3.2]$, its only positive zero
  is the smallest one. One has $\theta >0.0049$ for
  $(q,x)\in \partial \tilde{Q}_-$ and 
  $\theta <-0.015$ for $x=3.2$, $q\in [-1,-0.78]$.
  The three smallest positive zeros of
  $\theta (-0.78,.)$ are $<3.2$. 
  
\end{prop}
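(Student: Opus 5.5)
The plan is to prove part (3) first, by a compactness/continuity argument on the rectangles $\tilde{Q}_\pm$, and then to deduce parts (1) and (2) by combining (3) with an elementary treatment of small $|q|$.

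For $q$ near $-1$ we use decomposition (\ref{equdecomp}), $\theta=\theta_1+qx\theta_2$ with $\theta_1=\theta(q^4,x^2/q)$ and $\theta_2=\theta(q^4,qx^2)$. When $q\in[-1,-0.75]$ and $|x|\le 3.2$, the arguments $x^2/q$ and $qx^2$ lie in $[-10.24/0.75,0]$. One checks that $q^4\in[0.316,1]$, and these arguments are negative, so Proposition~\ref{prop6} (applied with parameter $q^4$) controls where $\theta_1,\theta_2$ vanish. What we really need, though, are explicit two-sided estimates. For these we truncate the alternating series of $\theta(q,x)$. For $x<0$, $q<0$ the terms are $q^{j(j+1)/2}x^j$. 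Grouping the exponents by $j$ mod $4$, the signs follow a period-4 pattern, so we bound tails via a geometric majorant once $|q|^{j}|x|<1$.

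Concretely, we subdivide $[-1,-0.75]$ (respectively $[-1,-0.8]$) into a finite grid of $q$-values and $[-3.1,0]$ into a grid of $x$-values. We then bound $|\partial\theta/\partial q|$ and $|\partial\theta/\partial x|$ on the rectangle by summing absolute values of termwise derivatives. This shows that $\theta$ stays positive on $\tilde{Q}_-$, which also gives the bound $>0.0049$ on the boundary, and that $\theta(q,3.2)<-0.015$ for $q\in[-1,-0.78]$. The endpoint $q=-1$ is handled by the limit $(1-x)/(1+x^2)$ from Remark~\ref{remKatsnelson}. This limit is legitimate because the segments lie inside $K^\bullet$, since $e^{\pi/2}\approx 4.8>3.2$.

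For $\tilde{Q}_+$, the claim is that $\theta(q,\cdot)$ has exactly one positive zero in $[0,3.2]$. Since $\theta(q,0)=1$ and $\theta(q,3.2)<0$, there is at least one zero. To get exactly one, we show $\partial\theta/\partial x<0$ wherever $\theta$ is near $0$ on the rectangle, or simply that $\partial_x\theta<0$ on a subrectangle containing all the sign changes. At $q=-1$ the limit function has derivative $(x^2-2x-1)/(1+x^2)^2$. This is negative only up to $1+\sqrt2$, so the monotonicity claim must instead be made on a region $x\in[x_0,3.2]$ together with positivity of $\theta$ on $[0,x_0]$. Finally, the three smallest positive zeros of $\theta(-0.78,\cdot)$ being $<3.2$ is checked by exhibiting sign changes of $\theta(-0.78,\cdot)$ at explicit points of $(0,3.2)$, using rigorous truncation error bounds.

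Parts (1) and (2) then follow by splitting the parameter range. For $q\in[-1,-0.75]$ (respectively $[-1,-0.8]$) they follow from (3). For $q\in[-0.8,0)$ we use $|q|^{j(j+1)/2}|x|^j$ with $|x|\le 2.4$ and $|q|\le 0.8$. For small $|q|$ the term $1$ dominates the sum, via $|\theta-1|\le\sum_{j\ge1}$. For intermediate $q$, say $[-0.8,-0.3]$, we again use a finite grid with Lipschitz bounds, together with the fact that $\theta(q,\cdot)$ is decreasing near its first positive zero, so that there is no second zero below $2.4$.

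The main obstacle is the intermediate range of $q$ together with part (2), where the first zero is close to $2.4$ and two zeros could nearly merge. The uniform-in-$q$ estimates there are tight, and I expect them to require a fine grid and careful derivative bounds.
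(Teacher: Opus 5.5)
\textbf{The gap: the regime $q\to-1^+$.} Part (3) depends on controlling this regime, and through (3) so do your parts (1) and (2) for $q\le -0.75$. A finite grid in $q$, with Lipschitz constants obtained by summing absolute values of termwise derivatives, cannot cover $[-1,-0.75]$. For $|x|>1$ the terms $|q|^{j(j+1)/2}|x|^j$ first grow to roughly $\exp\bigl((\ln|x|)^2/(2\ln(1/|q|))\bigr)$ before they decay. For $q=-0.99$, $x=-3.1$ this is about $10^{27}$, while $\theta$ itself is of order $1$. So your truncation errors and your majorant for $|\partial\theta/\partial q|$ blow up as $q\to-1^+$, and the index beyond which $|q|^j|x|<1$ tends to infinity. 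Knowing the limit $(1-x)/(1+x^2)$ at $q=-1$ does not rescue this. Katsnelson's result gives uniform convergence on compacts but no rate, hence no explicit $\delta$ with $\theta$ under control on $(-1,-1+\delta]$. Without such a $\delta$ the finite computation on the rest of the interval cannot even be set up.

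\textbf{The missing idea.} It is the decomposition (\ref{equdecomp}), which you mention and then drop because the arguments can reach $-13.65$. It is only needed on a thin strip near $-1$. For $q\in(-1,-0.97]$ one has $q^4>0.88$. For $|x|\le 2.4$ (resp.\ $|x|\le 3.1$) the arguments $x^2/q$ and $qx^2$ lie in $[-6,0]$ (resp.\ $[-10.5,0]$). So Proposition~\ref{prop6} gives $\theta_1,\theta_2>0$; that proposition is proved via the Jacobi triple product $\theta=\Theta^*-G$, which stays effective up to $q=1$. For $x<0$ also $qx>0$, so $\theta=\theta_1+qx\theta_2>0$ with no numerics at all. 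Truncated series are needed only for $q\in[-0.97,0)$, where they converge fast enough to be truncated rigorously.

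\textbf{Two further steps fail as written.} On $\tilde Q_+$, the derivative $(x^2-2x-1)/(1+x^2)^2$ of the limit function is \emph{positive} on $(1+\sqrt2,3.2]$, while the first zero is near $1$. So for $q$ near $-1$ there is no $x_0$ with $\theta>0$ on $[0,x_0]$ and $\partial\theta/\partial x<0$ on $[x_0,3.2]$. You would need a three-piece split: $\theta>0$, then $\partial\theta/\partial x<0$, then $\theta<0$. Similarly, in part (2), the fact that $\theta(q,\cdot)$ decreases near its first positive zero does not exclude two further zeros below $2.4$.

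\textbf{How the paper avoids two-dimensional checks.} It avoids all two-dimensional sign and monotonicity checks by a continuity argument in $q$. As $q$ decreases in $(-1,0)$, real zeros can only coalesce into complex conjugate pairs and are never created, and $\theta(q,0)=1$. So it suffices to know the zeros for $q$ near $0$ (or at $q=-0.75$, $-0.8$) and to verify that $\theta$ does not vanish on the lines $x=-2.4$, $x=2.4$, $x=-3.1$, $x=3.2$. This reduces the numerics to one-parameter checks in $q$ on $[-0.97,0)$. The strip near $q=-1$ is handled through (\ref{equdecomp}) and Proposition~\ref{prop6}.
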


Propositions~\ref{prop6} and \ref{prop2.2} are proved in
Section~\ref{secprprop2.2}.

\begin{rems}

  {\rm (1) The choice of the sets in the formulation of
    Proposition~\ref{prop2.2} is clarified in part~(2)
    of Remarks~\ref{remsLP}.
    \vspace{1mm}
    
    (2) Numerical computation shows that $\theta (-0.7,.)$
    (resp. $\theta (-0.78,.)$)
    has a zero in the interval $(-2.7,-2.6)$ (resp. in $(2.7,2.8)$).}
\end{rems}
 
We remind that the {\em spectrum} $\Gamma$
of $\theta$ is the set of values of the
parameter $q$ for which $\theta (q,.)$ has a multiple zero. This notion was
introduced by Boris Shapiro in~\cite{KoSh}. We list some facts about the
spectrum of~$\theta$. They correspond to \cite[Theorem~1]{KoBSM1}
(see 1.--3.), 
\cite[Theorem~1.4]{KoPRSE2} (see 4.--7.) and \cite[Theorem~8]{KoSe} (see~8.).
\vspace{1mm}

{\em Properties of the spectrum of~$\theta$:}

\begin{enumerate}
\item
For $q\in (0,1)$, the spectrum consists of countably-many values of $q$
denoted by $0<\tilde{q}_1<\tilde{q}_2<\cdots$, where
$\lim_{j\rightarrow \infty}\tilde{q}_j=1^-$. We set $\tilde{q}_0:=0$.
One has $\tilde{q}_1=0.3092493386\ldots$, see~\cite{KoSh}. For
$q\in (0,\tilde{q}_1)$, all zeros of $\theta$ are negative and distinct:
$\cdots <\xi_3<\xi_2<\xi_1<0$. 

\item
  For $\tilde{q}_N\in \Gamma$, the function $\theta (\tilde{q}_N,.)$
  has exactly one multiple real zero which is of multiplicity $2$ and
  is the rightmost of its real zeros. The real zeros of $\theta$ are:
  $\cdots <\xi_{2N+2}<\xi_{2N+1}<\xi_{2N}=\xi_{2N-1}<0$. 

\item
  For $q\in (\tilde{q}_N,\tilde{q}_{N+1})$, the function $\theta$ has exactly
  $N$ complex conjugate pairs of zeros (counted with multiplicity).
  Thus when the parameter $q$ increases in $(0,1)$ and passes
  through a spectral value,
  the function $\theta$ loses two real zeros and acquires
  a complex conjugate pair. For no value of $q$ do two complex
  conjugate zeros coalesce
  to become two real zeros. 

\item
  For $q\in (-1,0)$, there exists a sequence of values of $q$ (denoted by
  $\bar{q}_j$) tending to $-1^+$ such that $\theta (\bar{q}_k,.)$
  has a double real zero $\bar{y}_k$
  (the rest of its real zeros being simple). For the remaining values of
  $q\in (-1,0)$, the function $\theta (q,.)$ has no multiple
  real zero. One has $\bar{q}_1=0.72713332\ldots$, 
  $\bar{q}_2=0.78374209\ldots$ and $\bar{q}_3=0.84160192\ldots$,
  see~\cite{KoPRSE2}.  

\item
  For $k$ odd (respectively, for $k$ even), one has $\bar{y}_k<0$,
  $\theta (\bar{q}_k,.)$ has a local minimum
  at $\bar{y}_k$ and $\bar{y}_k$ is the rightmost of the real negative zeros
  of $\theta (\bar{q}_k,.)$ (respectively, $\bar{y}_k>0$,
  $\theta (\bar{q}_k,.)$ has a local maximum at $\bar{y}_k$ and for $k$
  sufficiently large, $\bar{y}_k$ is the
  leftmost but one (second from the left) of the real negative zeros of
  $\theta (\bar{q}_k,.)$).

\item
  For $k$ sufficiently large, one has $-1<\bar{q}_{k+1}<\bar{q}_k<0$.

\item
  For $k$ sufficiently large and for $q\in (\bar{q}_{k+1},\bar{q}_k)$,
  the function $\theta (q,.)$ has exactly
  $k$ complex conjugate pairs of zeros counted with multiplicity. When $q$
  decreases in $(-1,0)$ and passes through a spectral value, two real zeros
  of $\theta$ coalesce to form a complex conjugate pair. Complex zeros do
  not coalesce to give birth to real zeros.

\item
  For $q\in (-1,0)$, no zero of $\theta$ crosses
  the imaginary axis.
\end{enumerate}

\begin{rems}\label{remsLP}
  {\rm (1) For $q\in (0,\tilde{q}_1]$ and $q\in [\bar{q}_1,0)$, the partial
    theta function belongs to the Laguerre-P\'olya classes $\mathcal{LP}I$
    and $\mathcal{LP}$ respectively; it is of order~$0$. For
    $q\in (\tilde{q}_1,1)$ and $q\in (-1,\bar{q}_1)$, it is the product of
    such a function and a real polynomial in $x$ without real roots.
\vspace{1mm}

(2) In the formulation of part (3) of Proposition~\ref{prop2.2}
the segment $[-1,-0.78]$ is longer than the segment $[-1,-0.8]$
  in order to prove the last statement of part (3). The number $-0.78$ is
  chosen close to the spectral number $\bar{q}_2=-0.7837\ldots$.}
\end{rems}

Complex conjugate pairs do not go too close to the origin
or too far from it.
Concretely, the best
results known to-date about the location of the complex conjugate pairs
read:

\begin{tm}\label{tm4parts}(1) {\rm (\cite[Theorem~1]{KoArxiv})}
  For $q\in (0,1)$,
  the complex conjugate pairs with non-negative real part (if any) of
  $\theta (q,.)$ belong to the half-annulus
  $\mathcal{A}:=\{ {\rm Re}x\geq 0,~1<|x|<5\}$.
  \vspace{1mm}
  
  (2) {\rm (\cite[Theorem~3]{KoArxiv})} For $q\in (0,1)$, the complex
  conjugate pairs
  of zeros of $\theta (q,.)$ with negative real part
belong to the left open half-disk of radius $49.8$ centered at the origin.
\vspace{1mm}

(3) {\rm (\cite[Theorem~1]{KoMatStud})} For any fixed $q\in (0,1)$,
the partial theta function has no zeros in the domain
$\mathcal{D}:=\{ x\in \mathbb{C}:|x|\leq 3,~{\rm Re} x\leq 0,
|{\rm Im} x|\leq 3/\sqrt{2}\}$ (with $3/\sqrt{2}=2.121320344\ldots$).
\vspace{1mm}

(4) {\rm (\cite[Theorem~1]{KoAnn24})} For each $q\in (-1,0)\cup (0,1)$ fixed,
the function $\theta$ has no zeros in the closed unit disk
$\overline{\mathbb{D}_1}$.
\vspace{1mm}

(5) {\rm (\cite[part~(2) of Theorem~1]{KoFAA19})} For $q\in (-1,0)$, all complex conjugate pairs of zeros
of $\theta$ belong to the rectangle $\{ x:|{\rm Re}x|<364.2,|{\rm Im}x|<132\}$.
\end{tm}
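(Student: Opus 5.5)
Theorem~\ref{tm4parts} introduces no new claim. Each of its five parts is quoted verbatim from an earlier paper of the author, as indicated in the formulation. My plan is therefore not to reprove it here but to justify each part by the corresponding reference: part~(1) by \cite[Theorem~1]{KoArxiv}, part~(2) by \cite[Theorem~3]{KoArxiv}, part~(3) by \cite[Theorem~1]{KoMatStud}, part~(4) by \cite[Theorem~1]{KoAnn24}, and part~(5) by \cite[part~(2) of Theorem~1]{KoFAA19}. The only thing to check is that the hypotheses and normalisations there match the present ones: the same series $\theta(q,x)=\sum_{j\ge0}q^{j(j+1)/2}x^j$, with $q$ in the stated real interval. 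This is immediate.

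If an independent argument were wanted, I would organise it around the Jacobi triple product. Write $\theta(q,x)=\Theta^*(q,x)-G(q,x)$, where $\Theta^*$ is the full theta function $\sum_{j\in\mathbb{Z}}q^{j(j+1)/2}x^j$ and $G$ is the tail over negative indices. The zeros of $\Theta^*(q,\cdot)$ are explicit, at $x=-q^{-k}$, $k\in\mathbb{Z}$. The tail $G$ is small for $|x|$ large, so a Rouch\'e-type comparison on suitable contours confines the complex zeros of $\theta$ to bounded regions. This gives parts~(2) and~(5); the change of variables in (\ref{equdecomp}) reduces case~B) to case~A) for $q^4$.

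For the exclusion of zeros from small regions (parts~(1), (3) and (4)), I would instead show directly that ${\rm Re}\,\theta$, or $|\theta|$, is bounded away from $0$ there. For small $|q|$ this follows from the estimate $|\theta-1|\le\sum_{j\ge1}|q|^{j(j+1)/2}|x|^j$. For $q$ close to $\pm1$ I would use Katsnelson's limit $1/(1-x)$ (resp.~$(1-x)/(1+x^2)$) from Remark~\ref{remKatsnelson}, which has no zeros in the relevant compact sets. The intermediate values of $q$ would be covered by finitely many numerical checks, with $q$-derivative bounds to pass from grid points to whole intervals.

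The main obstacle is this intermediate range of $q$, where neither perturbation argument applies. Controlling it is why the cited papers rely on careful case splitting and computer-assisted estimates. Since the theorem is stated only as a summary of known results, I would simply invoke the references.
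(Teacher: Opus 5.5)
Your proposal is correct and coincides with what the paper does: Theorem~\ref{tm4parts} is not proved there, and each part is quoted from \cite{KoArxiv}, \cite{KoMatStud}, \cite{KoAnn24} and \cite{KoFAA19}, exactly as you propose. One caveat about your optional sketch: for part~(4) it would not work near $q=\pm 1$, because the contours $K$ and $K^{\bullet}$ pass through $x=1$ and $x=\pm i$ (so there is no uniform convergence on $\overline{\mathbb{D}_1}$), and in case~B) the limit $(1-x)/(1+x^2)$ vanishes at $x=1$ --- the smallest positive zero of $\theta(q,.)$ lies in $(1,-1/q)$ and tends to $1$ --- so Katsnelson's limit cannot decide on which side of the unit circle that zero lies, and a sign argument such as $\theta(q,1)>0$ is needed instead.
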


%
%
%

\subsection{Improvement of \protect\cite[Theorem~1.4]{KoPRSE2}}

In the present subsection we improve \cite[Theorem~1.4]{KoPRSE2} (see in 
Subsection~\ref{subsecloc} properties 5.--7. of the spectrum of $\theta$),
by getting rid of the condition ``for $k$
sufficiently large''.

\begin{tm}\label{tmimprove}
(1) For $k$ odd (respectively, for $k$ even), one has $\bar{y}_k<0$,
  $\theta (\bar{q}_k,.)$ has a local minimum
  at $\bar{y}_k$ and $\bar{y}_k$ is the rightmost of the real negative zeros
  of $\theta (\bar{q}_k,.)$ (respectively, $\bar{y}_k>0$,
  $\theta (\bar{q}_k,.)$ has a local maximum at $\bar{y}_k$
  and $\bar{y}_k$ is the
  leftmost but one (second from the left) of the real negative zeros of
  $\theta (\bar{q}_k,.)$).
\vspace{1mm}

(2) One has $-1<\bar{q}_{k+1}<\bar{q}_k<0$.
\vspace{1mm}

(3) For $q\in (\bar{q}_{k+1},\bar{q}_k)$,
  the function $\theta (q,.)$ has exactly
$k$ complex conjugate pairs of zeros counted with multiplicity. 
\end{tm}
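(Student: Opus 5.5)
The plan is to combine the asymptotic result \cite[Theorem~1.4]{KoPRSE2}, which gives properties 5.--7. of the spectrum for $k\geq k_0$, with an explicit treatment of the finitely many remaining indices $k<k_0$. The localisation results Proposition~\ref{prop2.2} and Theorem~\ref{tm4parts} will confine all relevant zeros to a compact region, so that this finite check becomes a statement about finitely many, well separated zeros. The main tool is a continuity and counting argument in $q$ on $(-1,0)$. By property~4., the only way the number of real zeros of $\theta(q,.)$ can change is through a double real zero at some $\bar{q}_j$. By property~8., no zero crosses the imaginary axis, so zeros cannot pass between the negative and positive half-lines through the origin. By property~7., complex zeros never coalesce into real ones. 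Hence, as $q$ decreases from $0$ to $-1$, the number of complex conjugate pairs is a nondecreasing step function which jumps by exactly one at each $\bar{q}_j$, since the double zero is unique and of multiplicity two.

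First I would prove part~(3) together with a sharpening of part~(2). Near $q=0^-$, $\theta$ is close to $1$ on compacts and its zeros behave like those of $1+qx$, $1+q^2x\ldots$, so all zeros are real for $q\in[\bar{q}_1,0)$ (Remarks~\ref{remsLP}~(1)). Each passage through some $\bar{q}_j$ adds exactly one pair, so on the $j$-th interval between consecutive spectral values there are exactly $j$ pairs. This yields part~(3), once we know that the labelling by $k$ coincides with the ordering of the $\bar{q}_j$ by decreasing value, i.e.\ part~(2). I would therefore define $\bar{q}_k$ as the $k$-th spectral value met when $q$ decreases from $0$. Part~(2) then says that these values are distinct, that there are infinitely many of them, and that they accumulate only at $-1$. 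Distinctness and accumulation only at $-1$ follow from analyticity of the discriminant-type condition in $q$ and from property~4.

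Then comes part~(1), the parity statement. At $\bar{q}_k$ the double zero is formed by two real zeros which are adjacent, so $\theta$ has a local extremum there. Its sign tells whether it is a minimum (the graph touches from above, which requires $\theta>0$ nearby) or a maximum. I would track the sign pattern of $\theta(q,.)$ between consecutive real zeros. On the negative axis one has $\theta(q,0)=1>0$, and by Proposition~\ref{prop2.2}~(1) there are no zeros in $[-2.4,0)$. On the positive axis, Proposition~\ref{prop2.2}~(2) isolates the smallest positive zero. Alternating losses on the negative and positive sides would give the parity claim. The key input is the structure established in \cite{KoPRSE2} for large $k$: pairs are lost alternately, the negative side losing its rightmost pair and the positive side losing its second-from-left pair. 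This alternation is driven by the decomposition~(\ref{equdecomp}) and the limit $(1-x)/(1+x^2)$ inside $K^{\bullet}$ from Remark~\ref{remKatsnelson}.

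The main obstacle is the low $k$, i.e.\ establishing without the ``$k$ large'' hypothesis that the double zeros occur alternately on the negative and positive axes and at the claimed positions. For these $k$ I would use the explicit numerical values $\bar{q}_1,\bar{q}_2,\bar{q}_3$ together with the rigorous boxes $\tilde{Q}_\pm$ of Proposition~\ref{prop2.2}~(3). For $q\in[-1,-0.75]$, these boxes show that the rightmost negative zero and the three smallest positive zeros are controlled: $\theta>0$ on $\partial\tilde{Q}_-$, $\theta(q,3.2)<0$, and three positive zeros lie below $3.2$ at $q=-0.78$. This fixes the sign pattern of $\theta$ on $[-3.1,3.2]$ throughout. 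I would combine it with a uniform version of the asymptotic argument of \cite{KoPRSE2}, made effective for $q\leq -0.8$ using the limit function $(1-x)/(1+x^2)$ from Remark~\ref{remKatsnelson}. This should show that the $k$-th coalescence happens on the side dictated by the parity of $k$. If the effective estimate cannot reach $-0.8$, I would verify the remaining finitely many cases by interval arithmetic.
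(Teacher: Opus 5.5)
There is a genuine gap, and it is exactly at what you call the ``main obstacle''. Relabelling $\bar q_k$ as the $k$-th spectral value met as $q$ decreases makes part~(2) a tautology. In the statement, $\bar q_k$ is attached to a specific pair of zeros: for $k=2\ell-1$ the pair $x_{4\ell-2},x_{4\ell}$ in $(-1/q^{4\ell},-1/q^{4\ell-2})$, and for $k=2\ell+2$ the pair $x_{4\ell+3},x_{4\ell+5}$ in $(-1/q^{4\ell+3},-1/q^{4\ell+5})$. The content of the theorem is that these coalescences happen alternately and in this order. After your relabelling, all of that content must come from part~(1), and there you give no argument valid for all $k$:
\begin{itemize}
\item The threshold $k_0$ from \cite{KoPRSE2} is not explicit, so ``the remaining finitely many cases'' is not a finite, specified check, and interval arithmetic cannot close it.
\item The boxes $\tilde Q_\pm$ only see $|x|\le 3.2$, so they concern at most the first one or two coalescences.
\item The limit $(1-x)/(1+x^2)$ holds only on compacts inside $K^{\bullet}$ and has the single zero $x=1$. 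It says nothing about which pair coalesces first.
\end{itemize}
The ``uniform, effective version'' of the asymptotic argument is precisely what would need proving, and it is not supplied.

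The missing idea is a direct comparison of consecutive coalescences via the functional equation, which works uniformly in $k$.
\begin{itemize}
\item First localize the pairs. One has $\theta(q,-q^{-2s})>0$ and $\theta(q,-q^{-2s-1})<0$ for all $q\in(-1,0)$ (Proposition~\ref{propdoublezeros}), so each pair stays in its interval until it coalesces.
\item At a negative double zero $x_\diamond$ (at $\bar q_{2k-1}$), (\ref{equqx}) gives $\theta(q,x_\diamond/q)=1+x_\diamond\theta(q,x_\diamond)=1>0$. The point $x_\diamond/q$ lies in the next positive interval $(-1/q^{4k-1},-1/q^{4k+1})$, so the local maximum there is still positive.
\item At a positive double zero $x_*$ (at $\bar q_{2k+2}$), iterating (\ref{equqx}) gives $\theta(q,x_*/q^3)=1+x_*/q^2+x_*^2/q^3$, which is negative by Lemma~\ref{lmV}. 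The lemma applies because $x_*>2.4$ by Proposition~\ref{prop2.2}(2) (this, not the boxes, is what Proposition~\ref{prop2.2} is needed for) and, inductively, $q<\bar q_3<-0.84$. Hence the local minimum on the next negative interval is still negative.
\end{itemize}
By (\ref{equpartder}), local minima rise and local maxima fall as $q$ decreases. So in each case the next pair has not yet coalesced, which gives $\bar q_{j+1}<\bar q_j$ for all $j$. Only $\bar q_1>\bar q_2>\bar q_3$ is taken from numerics. Part~(1) then follows from this alternation, and part~(3) from the counting you describe.
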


The theorem is proved in Section~\ref{secprtmimprove}. In its proof we use
Proposition~\ref{prop2.2}.
We use also Proposition~\ref{propdoublezeros} which is of independent interest.

\subsection{Separating lines}

\begin{defi}\label{defiseparl}
  {\rm (1) For $q\in (\tilde{q}_1,1)$ and $a>0$, the line
    $\mathcal{S}_a:=\{ {\rm Re}\, x=-a\}$ in the plane of the variable $x$ is
    a {\em separating line} if all real zeros of $\theta (q,.)$
    are to its left and all complex conjugate pairs are to its right.
    \vspace{1mm}
    
    (2) For $q\in (-1,\bar{q}_1)$ and $a>0$,
    \vspace{1mm}
    
    (i) the line
    $\mathcal{L}_{a}:=\{ {\rm Re}\, x=-a\}$ is
    a {\em left separating line} if all negative real zeros of $\theta (q,.)$
    are to its left while all complex conjugate pairs
    and all positive real zeros are to its right;
    \vspace{1mm}
    
  (ii) the line
    $\mathcal{R}_a:=\{ {\rm Re}\, x=a\}$ is
    a {\em right separating line} if all negative real zeros of
    $\theta (q,.)$, its smallest positive real zero 
    and all complex conjugate pairs are to its left while all positive
    real zeros except the smallest one are to its right.}
\end{defi}

\begin{tm}\label{tmseparl}
  (1) For every $q\in (\tilde{q}_1,1)$, there exists a separating line with
  $a\geq 5$.
  \vspace{1mm}

  (2) For every $q\in (-1,\bar{q}_1)$, there exists a left separating line
  with $a\geq 2.4$.

  (3) For every $q\in (-1,\bar{q}_2)$, there exists a right separating line
  with $a\geq 3.2$.
\end{tm}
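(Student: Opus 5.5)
The plan is to obtain each separating line from a zero-free vertical strip. Such a strip comes from two kinds of facts: where the real zeros can lie, namely Propositions~\ref{prop6} and~\ref{prop2.2}, and where the complex conjugate pairs can lie, namely Theorem~\ref{tm4parts}. Since the number of complex pairs is finite for each fixed $q$ (properties 3.\ and 7.\ of the spectrum), it suffices to find one line lying between the two families.

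\textbf{Part (1).} Fix $q\in(\tilde q_1,1)$. By Proposition~\ref{prop6}(1), all real zeros are $<-6$. By properties 1.--3., there are at least one but finitely many complex pairs, together with infinitely many real zeros $\xi_j\to-\infty$. The delicate point is to show that the complex pairs have real part $>-5$, or at least greater than the rightmost real zero. Theorem~\ref{tm4parts} only confines pairs with negative real part to a half-disk of radius $49.8$, which is not enough. I would therefore use a continuity argument in $q$.
\begin{itemize}
\item A complex pair is born at a spectral value $\tilde q_N$ from the double zero $\xi_{2N}=\xi_{2N-1}$. By property~2 this double zero is the rightmost real zero.
\item Hence, just after birth, the new pair lies to the right of all remaining real zeros.
\item Complex pairs never return to the real axis (property~3).
\item Track the quantity $m(q):=\min\{{\rm Re}\,z\}$ over the complex zeros $z$, and compare it with the rightmost real zero $\xi(q)$.
\end{itemize}
A pair can only cross to the left of $\xi(q)$ by passing through some vertical line ${\rm Re}\,x=-a$ while real zeros lie on both sides of it. To rule this out, I would show that on the line ${\rm Re}\,x=-a$ with $a$ slightly larger than $|\xi(q)|$ there is no complex zero, using the decomposition $\theta=1+qx\theta(q,qx)$ or direct modulus estimates.

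A cleaner route, which I would try first, is to find an interval of $a$ on which the vertical line carries no zeros at all. Take $a\in[5,6]$. Here Proposition~\ref{prop6}(1) excludes real zeros. For the complex zeros on such a line, note that Theorem~\ref{tm4parts}(3) excludes zeros with $|x|\le 3$, ${\rm Re}\,x\le0$, $|{\rm Im}\,x|\le 3/\sqrt2$, but that does not cover the line ${\rm Re}\,x=-5$. So I would prove a new lemma: for $q\in(0,1)$, $\theta$ has no non-real zeros with ${\rm Re}\,x\le -5$ until the next spectral event. The natural way is to argue by continuity from $q=\tilde q_N^+$, together with the fact that zeros cannot coalesce on the real axis. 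Combining these, one sees the line ${\rm Re}\,x=-a$ with $a=\max(5,\ldots)$ chosen just right of all real zeros.

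\textbf{Parts (2) and (3).} These go the same way, with Proposition~\ref{prop2.2}(1) for negative zeros $<-2.4$ and Proposition~\ref{prop2.2}(2) for positive zeros $>2.4$, except the smallest one. For part~(3), property~5 and Theorem~\ref{tmimprove}(1) identify where the pairs are born:
\begin{itemize}
\item for odd $k$, at the rightmost negative zero, which gives left separation;
\item for even $k$, at a positive double zero lying just right of the smallest positive zero, which gives right separation, with $3.2$ coming from Proposition~\ref{prop2.2}(3).
\end{itemize}
Property~8 (zeros never cross the imaginary axis) keeps the pairs born on either side of $0$ on that side. This is exactly what makes the left and right lines compatible with the complex zeros coming from the opposite family.

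\textbf{Main obstacle.} The hard step is controlling the complex pairs for all $q$: showing that they never drift to the left of the leftmost-allowed real zero, or to the right past the positive zeros, as $q$ moves toward $\pm1$. I expect to handle it by a monotonicity or continuity argument in $q$, combined with Katsnelson's limit $1/(1-x)$ (respectively $(1-x)/(1+x^2)$) inside $K$ (respectively $K^\bullet$). That limit shows that near $q=\pm1$ the complex zeros stay near the contour, whose real part exceeds $-e^{\pi}$ in modulus only on the real axis; this would give the separating line uniformly there.
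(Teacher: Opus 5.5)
There is a genuine gap. Your first route has the right skeleton, but its one substantive step is left unproved, and your ``cleaner route'' is false.

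\textbf{The missing analytic step.} The skeleton of your first route is also the paper's. Each new pair is born at a double zero which, by property~2, is the rightmost real zero. Pairs never return to the real axis. So everything reduces to showing that no non-real zero ever lies on the vertical line through a real zero; continuity in $q$ then does the rest. (The paper uses the line through $\xi_{2k+1}$ for $q\in(\tilde q_k,\tilde q_{k+1}]$ and then shifts it slightly to the right.)

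All the analytic content of the theorem sits in that reduction, and you leave it as ``using $\theta=1+qx\theta(q,qx)$ or direct modulus estimates''. The missing idea is to write $\theta=\Theta^*-G$ as in (\ref{equthetaG}). One then proves opposite monotonicities in $b={\rm Im}\,x$ along ${\rm Re}\,x=-a$, $a\ge5$:
\begin{itemize}
\item The triple-product factors of $\Theta^*$, grouped as $F_k$ and $U$ in Lemma~\ref{lmFU}, have moduli strictly increasing in $b>0$ (for $q\ge0.3$).
\item $G$ splits into two-term blocks $t_k$ that are all negative at $b=0$ and whose moduli decrease in $b$. Hence $\sum_k|t_k|$ is a decreasing majorant of $|G|$ that coincides with $|G|$ at $b=0$ (Lemma~\ref{lmGb}).
\end{itemize}
At a real zero $\xi$ one has $|\Theta^*(\xi)|=|G(\xi)|$, and $\xi<-6$ by Proposition~\ref{prop6}(1). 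Therefore $|\Theta^*|>|G|$, so $\theta\neq0$, at $\xi\pm bi$ for every $b>0$.

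Parts~(2) and~(3) are not routine repetitions of this:
\begin{itemize}
\item For $q<0$, getting the monotonicity down to $a=2.4$ requires grouping the factors of $\Theta^*$ and the terms of $G$ in blocks of four after an initial group (Lemmas~\ref{lm2.2leftQ} and~\ref{lm2.2leftG}).
\item On the right line ${\rm Re}\,x=a\ge3.2$, one must compare $|\Theta^*/x^2|$ (increasing) with $|G/x^2|$ (decreasingly majorized). The paper establishes this only for $-q\in[0.75,1)$, which suffices there. Proposition~\ref{prop2.2}(3) is what places the relevant positive zero in the range $a\ge3.2$.
\end{itemize}

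\textbf{The ``cleaner route''.} This route is not merely incomplete but false. At $q=\tilde q_N$ the new pair emanates from a real double zero, which is $<-6$ by Proposition~\ref{prop6}(1). So for $q$ slightly above $\tilde q_N$ the pair lies to the left of every line in the strip $-6\le{\rm Re}\,x\le-5$, and no such line can be separating. Your proposed lemma (no non-real zeros with ${\rm Re}\,x\le-5$ until the next spectral value) fails for the same reason. The separating line has to move with $q$, attached to the real zeros.

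Katsnelson's limit cannot rescue the argument either. For $q$ near $\pm1$ it only controls zeros on compact subsets of the interior of $K$ (resp.\ $K^\bullet$). It does not place the complex zeros near the contour. It says nothing about their real parts relative to those of the real zeros. It gives no information for $q$ away from $\pm1$.
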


The theorem is proved in Section~\ref{secprtmseparl}. For the inequalities $a\geq 2.4$ and $a\geq 3.2$ see parts (1) and (3) of Proposition~\ref{prop2.2}.

\begin{rems}
  {\rm (1) One can choose the constants $a$ defining the separating lines
    (resp. the left
    or right separating lines) as continuous functions on each of the
    intervals $(\tilde{q}_j,\tilde{q}_{j+1})$ (resp. $(\bar{q}_{2j+1},\bar{q}_{2j-1})$ or $(\bar{q}_{2j+2},\bar{q}_{2j})$).
    They can be continuous on $(0,1)$ or $(-1,0)$ only if  
    one admits double zeros of $\theta$ to belong to the separating (resp. left/right separating) lines.
    \vspace{1mm}

    (2) Numerical computation with the truncation $\theta_{140}$
    of $\theta$ up to the
    140th term and with $q=-0.96$ shows that $\theta_{140}(-0.96,.)$
    has a pair of conjugate zeros $0.8246197382\ldots \pm 1.226652727\ldots$
    and a real zero in $[1,1+10^{-9}]$. One can conjecture that as $q$
    decreases in $(-1,0)$ and tends to $-1^+$, for every conjugate pair of
    zeros $a\pm ib$ of $\theta$, $a>0$, $a=a(q)$, $b=b(q)$,
    there exists $q_{\bullet}\in (-1,0)$ such that one has $a>x_1$,
    $a=x_1$ and $a<x_1$ for $q>q_{\bullet}$, $q=q_{\bullet}$ and $q<q_{\bullet}$
    respectively, where $x_1(q)$ is the first positive zero of $\theta (q,.)$.
    This is the only situation in which three zeros of $\theta$
    have the same real part.}
\end{rems}

\subsection{The signs of $\partial^2\theta /\partial x^2$
  and $\partial \theta /\partial q$}

The propositions and lemma from this subsection are proved in
Section~\ref{secprpropsecond}. 

\begin{prop}\label{propsecond}
  For $q\in (0,1)$ and $x\geq -q^{-3/2}$,
  the function $\partial^2\theta /\partial x^2$
  is positive (so it has no zeros). Hence the function
  $\partial \theta /\partial q$ is negative for $x\in [-q^{-1/2},0)$ and
    positive for $x\in (0,\infty )$.
    \end{prop}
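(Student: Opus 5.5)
The plan has two parts. The first is to prove positivity of $\partial^2\theta/\partial x^2$ directly from the series. The second is to derive the statement about $\partial\theta/\partial q$ from it using the functional equation (\ref{equqx}); this second part is almost formal.

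\emph{Reduction for $\partial\theta/\partial q$.} I would observe that
$$x\,\frac{\partial^2}{\partial x^2}\bigl(x\theta\bigr)=\sum_{j\geq 1}j(j+1)q^{j(j+1)/2}x^j=2q\,\frac{\partial\theta}{\partial q}~.$$
Applying (\ref{equqx}) at the point $x/q$ gives $\theta(q,x/q)=1+x\theta(q,x)$, that is, $x\theta(q,x)=\theta(q,x/q)-1$. Differentiating twice in $x$ yields $(x\theta)''=q^{-2}\,\theta_{xx}(q,x/q)$. Therefore
$$\frac{\partial\theta}{\partial q}(q,x)=\frac{x}{2q^3}\,\frac{\partial^2\theta}{\partial x^2}(q,x/q)~.$$
The condition $x/q\geq -q^{-3/2}$ is exactly $x\geq -q^{-1/2}$. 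Once the first claim is known, the sign of $\partial\theta/\partial q$ equals the sign of $x$ on $[-q^{-1/2},0)\cup(0,\infty)$, which is the second claim.

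\emph{Positivity of $\theta_{xx}$.} For $x\geq 0$ all terms of $\sum_{j\geq 2}j(j-1)q^{j(j+1)/2}x^{j-2}$ are nonnegative and the constant term is $2q^3>0$, so only $x\in[-q^{-3/2},0)$ needs work. Since $j(j+1)/2=k(k+1)/2+2k+3$ for $j=k+2$, substituting $y=q^2x$ gives
$$\theta_{xx}(q,x)=q^3g(y)~,\qquad g(y):=\sum_{k\geq 0}(k+1)(k+2)q^{k(k+1)/2}y^k=\bigl(y^2\theta(q,y)\bigr)''~,$$
and the range $x\geq -q^{-3/2}$ becomes $y\in[-\sqrt q,0)$. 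For such $y$ the series $g$ is alternating, with moduli of terms at most $c_k=(k+1)(k+2)q^{k(k+2)/2}$. The ratio is
$$\frac{c_{k+1}}{c_k}=\frac{k+3}{k+1}\,q^{k+3/2}~,$$
which is $<1$ for all $k\geq 0$ as soon as $3q^{3/2}<1$, i.e. $q<0.48$. On that range the Leibniz criterion gives $g(y)\geq 2-6q^{3/2}|y|/\sqrt q>0$.

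\emph{Main obstacle: $q\geq 0.48$.} Here the moduli first increase and the alternating-series argument fails for the first few terms. I would first group terms: from the ratio formula, $c_{k+1}<c_k$ for all $k\geq k_0$ with $k_0$ small (e.g. $k_0=2$ suffices for $q\leq 0.9$). This bounds the tail from $k_0$ on by its first term. It then remains to show that the polynomial $P(y)=\sum_{k\leq k_0}(k+1)(k+2)q^{k(k+1)/2}y^k$, minus that tail bound, is positive on $[-\sqrt q,0]$, by elementary estimates on a grid in $q$.

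As $q\to 1^-$ this degenerates, because $k_0\to\infty$. There I would instead use the identity $g=(y^2\theta)''=2\theta+4y\theta_y+y^2\theta_{yy}$ together with Remark~\ref{remKatsnelson}: uniformly for $y\in[-1,0]$, $\theta\to 1/(1-y)$, hence $g\to 2/(1-y)^3\geq 1/4$. Uniformity of the convergence together with its derivatives on compact subsets inside $K$ would then give positivity for $q\in[q_*,1)$ with some explicit $q_*$. The delicate point is making $q_*$ explicit enough to meet the grid computation; this is where I expect most of the effort to go.
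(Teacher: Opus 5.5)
Your reduction of the $\partial\theta/\partial q$ claim to the $\theta_{xx}$ claim is correct; it is exactly the first identity in (\ref{equ2equ}). Your Leibniz argument is also correct: it proves $\theta_{xx}>0$ on $[-q^{-3/2},0)$ for $q<3^{-2/3}\approx 0.48$. There is a small slip: $k_0=2$ works only for $q<0.6^{2/7}\approx 0.864$, since $\frac53\cdot 0.9^{7/2}\approx 1.15$.

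The genuine gap is the range $q\in[0.48,1)$. There you have a plan, not a proof, and as it stands the plan does not close. A grid in $q$ needs explicit control of $g$ between grid points, and it has to reach an explicit $q_*$. Your treatment of $q$ near $1$ produces no such $q_*$. Katsnelson's theorem is purely qualitative. The index after which the majorants $c_k$ decrease grows like $(1-q)^{-1/2}$. An effective rate is not a routine addition either: the endpoint $y=-\sqrt q$ tends to $-1$, where $\sum(k+1)(k+2)y^k$ diverges and $2/(1-y)^3$ is only a summation value of it. So the two regimes are never glued together.

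The missing idea is that you need not estimate $\theta_{xx}$ on the whole interval.

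\emph{The left endpoint} can be handled exactly for every $q$. By your own identity, $\theta_{xx}(q,-q^{-3/2})>0$ is equivalent to $\theta_q(q,-q^{-1/2})<0$. This holds for the following reasons (Lemma \ref{lm1/2}):
\begin{itemize}
\item The function $\nu(q)=\theta(q,-q^{-1/2})=\sum_{k\ge 0}(-1)^kq^{k^2/2}$ is decreasing. This follows from Jacobi's product $1+2\sum_{k\ge1}(-1)^kp^{k^2}=\prod_{k\ge1}(1-p^k)/(1+p^k)$ with $p=\sqrt q$.
\item One has $\nu'=\theta_q+\frac12 q^{-3/2}\theta_x$, and $\theta_x>0$ for $x>-1/q$.
\end{itemize}

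\emph{The interior} is handled by a deformation argument. Differentiating (\ref{equpartder}) twice in $x$ gives $2q\,\partial^3\theta/\partial q\partial x^2=6\theta_{xx}+6x\theta_{xxx}+x^2\theta_{xxxx}$. This is positive at any local minimum of $\theta_{xx}$ with positive value, so such minima rise as $q$ increases. Zeros cannot enter through the ends of the interval: $\theta_{xx}(q,-q^{-3/2})>0$ by the endpoint argument, and $\theta_{xx}(q,0)=2q^3>0$. Hence no zero of $\theta_{xx}$ can appear in $[-q^{-3/2},0)$ as $q$ increases.

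This is the paper's proof. It starts from small $q$ via the Laguerre--P\'olya property. Your explicit Leibniz bound for $q<0.48$ would serve equally well as the starting point, and with it no numerical verification is needed at all.
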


From Proposition~\ref{propsecond} we deduce an interesting corollary 
about the functions $\varphi_k(q):=\theta (q,-q^{k-1})$ which have often been 
used in the study of the analytic properties of~$\theta$:

\begin{lm}\label{lmkgeq1}
  For $q\in (0,1)$ and $k=1/2$ or $k\geq 1$,
  the function $\varphi_k$ is
  decreasing.
\end{lm}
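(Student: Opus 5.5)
We compute the derivative of $\varphi_k(q)=\theta(q,-q^{k-1})$ by the chain rule:
$$\varphi_k'(q)=\frac{\partial\theta}{\partial q}(q,-q^{k-1})-(k-1)q^{k-2}\frac{\partial\theta}{\partial x}(q,-q^{k-1})~.$$
The plan is to show that both terms are negative at the point $x=-q^{k-1}$, or that their sum is negative.

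\emph{Case $k\geq 1$.} Here $x=-q^{k-1}\in[-1,0)$, and since $q^{-1/2}>1$ this point lies in $[-q^{-1/2},0)$. Proposition~\ref{propsecond} then gives $\partial\theta/\partial q<0$ there. For the second term we need $\partial\theta/\partial x>0$ on $[-1,0)$. By Proposition~\ref{propsecond}, $\partial^2\theta/\partial x^2>0$ for $x\geq -q^{-3/2}$, so $\partial\theta/\partial x$ is increasing on $[-q^{-3/2},\infty)\supset[-1,0]$. It therefore suffices to check its sign at $x=-1$.

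To do this, write
$$\partial\theta/\partial x=\sum_{j\geq1}jq^{j(j+1)/2}x^{j-1}~.$$
At $x=-1$ this is an alternating series $q-2q^3+3q^6-\cdots$, which is not obviously positive. A cleaner route may be to use convexity on $[-q^{-3/2},-1]$ instead: the ratio of consecutive terms of $\partial\theta/\partial x$ at $x=-1$ is $\frac{j+1}{j}q^{j+1}$, which is below $1$ once $q$ is not too close to $1$. This sign check is the main obstacle. If it fails for $q$ near $1$, I would instead use the functional equation (\ref{equqx}) to relate $\partial\theta/\partial x$ to values of $\theta$, or try to show directly that the combined derivative $\varphi_k'$ is negative.

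It is also worth noting that for $k=1$ the second term vanishes, so $\varphi_1'=\partial\theta/\partial q<0$ immediately.

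\emph{Case $k=1/2$.} Here $x=-q^{-1/2}$, the left endpoint of the interval in Proposition~\ref{propsecond}, so $\partial\theta/\partial q<0$ there. The second term is $+\tfrac12 q^{-3/2}\,\partial\theta/\partial x$, which has the wrong sign whenever $\partial\theta/\partial x>0$. So this case needs a separate argument. One possibility is to write
$$\varphi_{1/2}(q)=\sum_j(-1)^jq^{j(j+1)/2-j/2}=\sum_j(-1)^jq^{j^2/2}~,$$
a half Jacobi-type sum, and differentiate it termwise. An easier alternative is to apply (\ref{equqx}): $\varphi_{1/2}(q)=1-q^{1/2}\theta(q,-q^{1/2})=1-q^{1/2}\varphi_{3/2}(q)$. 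Then monotonicity of $\varphi_{1/2}$ reduces to showing that $q^{1/2}\varphi_{3/2}$ is increasing, which would follow from the $k\geq1$ information together with positivity of $\theta$ near these points. Making this last step rigorous is the second obstacle.
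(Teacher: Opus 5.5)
Your case $k=1$ is correct, and for $k>1$ your chain-rule split is exactly the paper's. However, the decisive sign fact is never established. You need $\partial\theta/\partial x>0$ at $x=-q^{k-1}$. After the (valid) convexity reduction to $x=-1$, you control the alternating series $q-2q^3+3q^6-\cdots$ by the Leibniz criterion only when its first ratio $2q^2$ is below $1$, i.e.\ for $q<1/\sqrt{2}$. For $q\in[1/\sqrt{2},1)$ the proposal offers only unexecuted alternatives. The paper closes this by citing \cite[Part~(3) of Proposition~4.6]{KoPRSE2}: $\partial\theta/\partial x>0$ for $x>-q^{-1}$. This is the same fact already used in the proof of Lemma~\ref{lm1/2}. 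You can also obtain it from your own ingredients. Your $k=1$ case shows $\varphi_1$ is strictly decreasing, and $\varphi_1(q)\to 1/2$ as $q\to 1^-$ by Remark~\ref{remKatsnelson}, so $\theta(q,-1)>1/2$. Then (\ref{equqx}) gives $\theta(q,-1/q)=1-\theta(q,-1)<\theta(q,-1)$. Since $\theta(q,\cdot)$ is strictly convex on $[-q^{-3/2},\infty)\supset[-1/q,\infty)$, $\partial\theta/\partial x(q,-1)$ exceeds the positive slope of the secant over $[-1/q,-1]$.

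For $k=1/2$ there is no proof. Your route via $\varphi_{1/2}=1-q^{1/2}\varphi_{3/2}$ runs the wrong way. Knowing that $\varphi_{3/2}$ decreases does not make $q^{1/2}\varphi_{3/2}$ increase; that would require the quantitative bound $-2q\varphi_{3/2}'<\varphi_{3/2}$, which nothing in hand provides. Termwise differentiation of $\sum_{j\geq0}(-1)^jq^{j^2/2}$ gives an alternating series whose terms are not monotone in modulus for $q$ near $1$, so it does not settle the sign either. The missing idea is the one you nearly named. Because $j^2/2$ is even in $j$, one has $2\varphi_{1/2}(q)-1=\sum_{j\in\mathbb{Z}}(-1)^jq^{j^2/2}$. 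Gauss's identity \cite[Problem~56 in Part~I, Chapter~1]{PoSz} turns this into $\prod_{m\geq1}(1-q^{m/2})/(1+q^{m/2})$, a product of positive, strictly decreasing factors. This is exactly how the paper shows $\nu'<0$ in the proof of Lemma~\ref{lm1/2}. The case $k=1/2$ of the lemma is that statement, since $\nu=\varphi_{1/2}$.
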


We set $K^{\dagger}:=\cup_{j=1}^{\infty}(2j-1,2j)\subset \mathbb{R}$.
The proposition that follows is a
more general statement than \cite[Proposition~3]{KoSe}. Lemma~\ref{lmkgeq1}
is involved in its proof.

\begin{prop}\label{propincr}
  For $q\in (0,1)$ and $a\in K^{\dagger}$, the function $\theta (q,-q^{-a})$
  is strictly increasing from $-\infty$ to $1/2$.
  For each $a\in K^{\dagger}$, there exists a unique
  point $(q_a,-q_a^{-a})$, $q_a\in (0,1)$, such that $\theta (q_a,-q_a^{-a})=0$. 
  For $a>0$, $a\not\in K^{\dagger}$, there
  exists no such point $(q_a,-q_a^{-a})$.
\end{prop}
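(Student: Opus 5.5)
The plan is to study $g_a(q):=\theta (q,-q^{-a})$ for fixed $a\in (2k-1,2k)$ by unfolding the functional equation~(\ref{equqx}) a finite number of times. This splits $g_a$ into an explicit finite alternating sum of monomials in $q$ plus a remainder of the form $\pm q^{c}\,\theta (q,-q^{b})$, where the point $-q^{b}$ lies in the range covered by Proposition~\ref{propsecond} and Lemma~\ref{lmkgeq1}.

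Iterating (\ref{equqx}) $m$ times at $x=-q^{-a}$ gives
$$g_a(q)=\sum_{j=0}^{m-1}(-1)^jq^{e_j}+(-1)^mq^{e_m}\theta (q,-q^{m-a}),\qquad e_j:=j(j+1)/2-aj .$$
Since $e_j-e_{j-1}=j-a$, the exponents decrease for $j\leq 2k-1$ and increase for $j\geq 2k$. Hence $e_{2k-1}$ is the unique minimum, and it is negative.

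\textbf{Limits at the endpoints.} As $q\to 0^+$, the term $(-1)^{2k-1}q^{e_{2k-1}}=-q^{e_{2k-1}}$ dominates all the others, including the remainder, which is $O(q^{e_m})$ with $e_m>e_{2k-1}$ once $m\geq 2k$. Therefore $g_a\to -\infty$. As $q\to 1^-$, one has $-q^{-a}\to -1$, a point lying inside the contour $K$ of Remark~\ref{remKatsnelson}. By the uniform convergence to $1/(1-x)$ on compact sets, $g_a(q)\to 1/2$.

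\textbf{Strict monotonicity.} I would write
$$g_a'(q)=\partial\theta /\partial q\,(q,-q^{-a})+a\,q^{-a-1}\,\partial\theta /\partial x\,(q,-q^{-a}).$$
Since the point $-q^{-a}$ is in general far left of $-q^{-3/2}$, Proposition~\ref{propsecond} cannot be applied to this formula directly. So I would work instead with the decomposition above, taking $m=2k$, so that $b:=2k-a\in (0,1)$ and the remainder $+q^{e_{2k}}\theta (q,-q^{b})$ is evaluated at a point of $(-1,0)$.

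The finite sum is then grouped in pairs $\bigl(q^{e_{2i}}-q^{e_{2i+1}}\bigr)$, together with the unpaired term $-q^{e_{2k-1}}$. Each pair and the unpaired term is handled by elementary calculus, using the sign pattern of $e_j-e_{j-1}$ to show that it is increasing.

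For the remainder I would use Proposition~\ref{propsecond}: at $x=-q^{b}\in [-q^{-1/2},0)$ one has $\partial\theta /\partial q<0$, while $\partial\theta /\partial x>0$ there because $\theta_{xx}>0$ and $\theta_x(q,0)=q>0$ only gives convexity. A more robust route is therefore to compare with Lemma~\ref{lmkgeq1}. One interpolates $\theta (q,-q^{b})$ between $\varphi_{1/2}$ and $\varphi_1$, both decreasing, and controls the product with the factor $q^{e_{2k}}$.

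\textbf{Uniqueness and nonexistence.} Once monotonicity is established, the limits $-\infty$ and $1/2$ give a unique zero $q_a$ for each $a\in K^{\dagger}$. For $a>0$, $a\not\in K^{\dagger}$, the same splitting has leading term $+q^{e_{2k}}$ when $a\in [2k,2k+1]$ (at integer $a$, two equal minimal exponents with opposite signs cancel, and one examines the next order). The plan is to show $g_a>0$ throughout. For $a\leq 1$ this follows from Theorem~\ref{tm4parts}(4) and positivity of $\theta$ on $(-1,0]$ together with part (1) of Proposition~\ref{prop6}. For larger $a$ one would use the paired-term positivity.

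\textbf{Main obstacle.} The hard step is proving strict increase for all $q\in (0,1)$ rather than only near the endpoints. The remainder term $q^{e_{2k}}\theta (q,-q^{b})$ has competing monotonicities, and the pair differences are not obviously monotone near $q=1$. I would first try to establish $g_a'>0$ by showing that the $q$-derivative of each paired block is dominated by that of the negative leading term $-q^{e_{2k-1}}$. Failing that, I would try induction on $k$ via $g_{a+2}(q)=1-q^{1-a}+q^{3-2a}g_a(q)$, reducing everything to the base case $a\in (1,2)$.
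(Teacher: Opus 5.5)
\paragraph{The monotonicity step.} Your limits as $q\to0^+$ and $q\to1^-$ are fine. The gap is strict monotonicity, which is the core of the proposition: your splitting cannot deliver it without a quantitative estimate that the plan does not contain.

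With $m=2k$ the finite part causes no trouble. All $2k$ terms pair up as $q^{e_{2i}}-q^{e_{2i+1}}$, $0\le i\le k-1$, so there is no extra unpaired term. Since $e_{2i+1}<e_{2i}\le 0$, each pair is increasing.

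The remainder is the problem. It equals $q^{e_{2k}}\varphi_{b+1}(q)$ with $e_{2k}=k(2k+1-2a)$, and this exponent is negative for every $k\ge 2$ (and for $k=1$ when $a>3/2$). The remainder is then a product of two positive decreasing functions (Lemma~\ref{lmkgeq1}), hence decreasing. Nothing in your plan bounds its derivative against that of the finite part.

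The fallback induction does not rescue this. The correct relation is $g_{a+2}(q)=1-q^{-1-a}+q^{-1-2a}g_a(q)$, not the one you wrote. On $(q_a,1)$, where $g_a>0$, the product of the increasing $g_a$ with the decreasing $q^{-1-2a}$ has no definite monotonicity. Separately, $\theta_{xx}>0$ together with $\theta_x(q,0)=q$ gives $\theta_x<q$ for $x<0$, not $\theta_x>0$.

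\paragraph{The nonexistence part.} This has a similar hole. For $a\in(2k,2k+1)$ with $k\ge 1$, the obvious pairings are not termwise positive. For instance, for $k=1$ one has $g_a=(1-q^{e_1})+(q^{e_2}-q^{e_3}\theta(q,-q^{3-a}))$, and the first bracket is negative. For $a\le 1$, Theorem~\ref{tm4parts}(4) and Proposition~\ref{prop6}(1) do not cover the range $q^{-a}>6$.

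\paragraph{The missing idea.} Split with the Jacobi triple product, i.e.\ (\ref{equthetaG}), instead of the functional equation:
$\theta(q,-q^{-a})=\Theta^*(q,-q^{-a})-G(q,-q^{-a})$.

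By (\ref{equT}), $\Theta^*(q,-q^{-a})=\prod_{j\ge1}(1-q^j)(1-q^{j-a})(1-q^{j+a-1})$. For non-integer $a$ its negative factors are exactly those with $j\le[a]$. Write each of them as $-q^{j-a}(1-q^{a-j})$. For $a\in K^{\dagger}$ (so $a\notin\mathbb{N}$ and $[a]$ is odd), this exhibits $\Theta^*(q,-q^{-a})$ as $-q^{-s}$, with $s>0$, times a product of positive strictly decreasing factors. Hence this piece increases strictly from $-\infty$ to $0$.

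Meanwhile $-G(q,-q^{-a})=1-\varphi_a(q)$ increases from $0$ to $1/2$, by Lemma~\ref{lmkgeq1} and Remark~\ref{remKatsnelson}. Both pieces are monotone, so no derivative comparison is ever needed.

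This also explains why your pieces compete. By quasi-periodicity, $\Theta^*(q,x)=qx\,\Theta^*(q,qx)$, so your remainder equals $\Theta^*(q,-q^{-a})-q^{e_{2k}}G(q,-q^{b})$. It therefore mixes the increasing product part with a piece of the $G$-series.

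Nonexistence then follows at once. For non-integer $a$ with $[a]$ even, there is an even number of negative factors, so $\Theta^*(q,-q^{-a})>0$, while $-G=q^a\varphi_{a+1}>0$. For integer $a$, a factor of $\Theta^*$ vanishes and $\theta(q,-q^{-a})=q^a\varphi_{a+1}>0$.
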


At the end of Section~\ref{secprpropsecond} we prove

\begin{prop}\label{prop2kincrease}
  In case A) each even zero $\xi_{2k}$ of $\theta$ is an increasing function in  
  $q\in (0,\tilde{q}_k]$.
  \end{prop}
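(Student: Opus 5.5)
The plan is to write $\xi_{2k}$ in the form $-q^{-a}$ and use Proposition~\ref{propincr} to obtain the sign of $\partial\theta/\partial q$ at the zero. Combined with the sign of $\partial\theta/\partial x$ there, the implicit function theorem then gives $d\xi_{2k}/dq>0$.

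\emph{Step 1: the sign of $\partial\theta/\partial x$ at $\xi_{2k}$.} Fix $k$ and take $q\in(0,\tilde{q}_k)$. By properties 1.--3. of the spectrum, $\xi_{2k}$ is then a simple real zero, and it depends analytically on $q$. We have $\theta(q,0)=1>0$, and all real zeros $\cdots<\xi_{2k+1}<\xi_{2k}<\xi_{2k-1}<\cdots$ are simple. The zeros that have already become a complex pair are the consecutive ones $\xi_1,\dots,\xi_{2m}$ for some $m<k$, and each such pair disappears as a double zero, so removing it does not change the parity of the signs. Hence $\theta(q,\cdot)$ alternates in sign on consecutive intervals, with $\theta>0$ on $(\xi_{2k+1},\xi_{2k})$ and $\theta<0$ on $(\xi_{2k},\xi_{2k-1})$. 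Therefore $\partial\theta/\partial x(q,\xi_{2k})<0$. Consequently
$$\frac{d\xi_{2k}}{dq}=-\frac{\partial\theta/\partial q}{\partial\theta/\partial x}\Big|_{x=\xi_{2k}},$$
and it suffices to prove $\partial\theta/\partial q>0$ at $(q,\xi_{2k})$.

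\emph{Step 2: the representation $\xi_{2k}=-q^{-a}$.} By part (4) of Theorem~\ref{tm4parts}, $|\xi_{2k}|>1$, so $\xi_{2k}=-q^{-a}$ with $a>0$. By the last claim of Proposition~\ref{propincr}, no zero lies on a curve $x=-q^{-a}$ with $a\not\in K^{\dagger}$. Hence $a\in K^{\dagger}$.

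\emph{Step 3: the sign of $\partial\theta/\partial q$.} For this fixed $a$, Proposition~\ref{propincr} says that $\psi(q):=\theta(q,-q^{-a})$ is strictly increasing. Since it is analytic, its derivative is $\geq 0$. I expect the main subtlety to be strict positivity of $\psi'$ at the zero $q_a$. To get it, I would go into the proof of Proposition~\ref{propincr}, which rests on Lemma~\ref{lmkgeq1} and Proposition~\ref{propsecond}, and extract a strict inequality there. Failing that, I would argue that $\psi'(q_a)=0$ would make the zero curve tangent to $x=-q^{-a}$. One then combines this with the uniqueness of $q_a$ for all nearby $a$ in the open interval of $K^{\dagger}$: the map $a\mapsto q_a$ stays single-valued, and a tangency would force two intersections for nearby $a$.

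Computing $\psi'$ with $dx/dq=aq^{-a-1}=-ax/q$ gives
$$0<\psi'(q)=\frac{\partial\theta}{\partial q}-\frac{a\,x}{q}\,\frac{\partial\theta}{\partial x},\qquad x=\xi_{2k}.$$
Here $x<0$, $a>0$ and $\partial\theta/\partial x<0$ by Step 1, so the term $-\frac{ax}{q}\frac{\partial\theta}{\partial x}$ is negative. Therefore
$$\frac{\partial\theta}{\partial q}>\frac{ax}{q}\frac{\partial\theta}{\partial x}>0.$$

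\emph{Step 4: conclusion.} Steps 1 and 3 give $d\xi_{2k}/dq>0$ on $(0,\tilde{q}_k)$. At $q=\tilde{q}_k$ the zero $\xi_{2k}$ becomes double, but it remains continuous in $q$ up to this endpoint. Hence the increase extends to the closed-at-right interval $(0,\tilde{q}_k]$.
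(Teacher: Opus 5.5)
Your proof is correct. It rests on the same two ingredients as the paper's proof: the monotonicity of $q\mapsto\theta(q,-q^{-a})$ from Proposition~\ref{propincr}, applied to the curve $x=-q^{-a}$ through $(q,\xi_{2k}(q))$, and the sign change of $\theta$ from positive to negative across $\xi_{2k}$.

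The paper uses these ingredients in a contradiction argument. It takes the largest $q_\natural$ with $\xi_{2k}'(q_\natural)=0$ and observes that the curve through $(q_\natural,\xi_{2k}(q_\natural))$ has positive slope. Hence $\theta$ along that curve would pass from positive to negative values, which contradicts Proposition~\ref{propincr}. It then relies on $\xi_{2k}'\to+\infty$ as $q\to\tilde{q}_k^-$ to conclude $\xi_{2k}'>0$.

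Your implicit differentiation is the infinitesimal version of this argument. It avoids the extremal choice of $q_\natural$ and the boundary behaviour at $\tilde{q}_k$, and it gives more. Dividing $\psi'\ge 0$ by $-\partial\theta/\partial x>0$ yields $\xi_{2k}'(q)\ge a q^{-a-1}=a|\xi_{2k}|/q$, where $a$ is defined by $\xi_{2k}(q)=-q^{-a}$. In words, the even zero rises at least as fast as the curve it currently sits on.

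The ``main subtlety'' you flag in Step~3 does not exist. You only need $\psi'(q)\ge 0$, which follows from monotonicity alone. The strictness comes from the second inequality: $\partial\theta/\partial q\ge (ax/q)\,\partial\theta/\partial x>0$, because $x<0$, $a>0$ and $\partial\theta/\partial x<0$ at the simple zero $\xi_{2k}$. So both of your fallback arguments, and the unjustified strict inequality $0<\psi'$ in your display, can simply be dropped.
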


\section{The method of proof}

The partial theta function owes its name to its resemblance with the
{\em Jacobi theta function}
$\Theta (q,x):=\sum_{j=-\infty}^{\infty}q^{j^2}x^j$, because
$\theta (q^2,x/q)=\sum_{j=0}^{\infty}q^{j^2}x^j$. ``Partial'' means that
summation in the case of $\theta$ is performed only from $0$ to $\infty$,
not from $-\infty$ to $\infty$. 

In the proofs we use, except the equality (\ref{equqx}),
the {\em Jacobi triple product}

$$\Theta (q,x^2)=\prod_{m=1}^{\infty}(1-q^{2m})(1+x^2q^{2m-1})(1+x^{-2}q^{2m-1})~.$$
In the text this formula is applied mainly to the function

$$\Theta^*(q,x)=\Theta (\sqrt{q},\sqrt{q}x)=\sum_{j=-\infty}^{\infty}q^{j(j+1)/2}x^j$$
in which case it yields

\begin{equation}\label{equT}
  \begin{array}{ccl}
    \Theta^*(q,x)&=&\prod_{m=1}^{\infty}(1-q^m)(1+xq^m)(1+q^{m-1}/x)\\ \\
    &=&(1+1/x)\prod_{m=1}^{\infty}(1-q^m)(1+xq^m)(1+q^m/x)~.\end{array}
\end{equation}
Setting $G:=\sum_{j=-\infty}^{-1}q^{j(j+1)/2}x^j=1/x+q/x^2+q^3/x^3+q^6/x^4+\cdots$,
one can represent
$\theta$ in the form

\begin{equation}\label{equthetaG}
  \theta =\Theta^*-G~.
  \end{equation}
For large values of $|x|$ one obtains majorations of $|G|$. Formula (\ref{equT})
allows to study the variation of $|\Theta^*|$ as a function in~$x$.
In the proofs we often make use of computer algebra.

\section{Proofs of Propositions~\protect\ref{prop6} and~\protect\ref{prop2.2}
  \protect\label{secprprop2.2}}

\begin{proof}[Proof of part (1) of Proposition~\ref{prop6}]
  We prove that $\theta (q,-6)>0$ for $q\in (0,1)$ from which follows
  $\theta (q,x)>0$ for $q\in (0,1)$, $x\in [-6,0]$. Indeed, $\theta >0$
  for $x\in [-1/q,0]$ (see \cite[Proposition~7]{KoBSM1}), hence
  $\theta (q,x)>0$ for $x\in [-6,0]$ and $q\in (0,1/6]$. Real zeros
  are not born, but can only disappear as $q$ increases in $(0,1)$, so
  $\theta (q,x)>0$ for $(q,x)\in (0,1)\times [-6,0]$.

  For $q\in (0,0.95]$,
  we use the rapid convergence of the series of $\theta$. Its truncation
  to the $100$th term is $>0.0073$ for $x=-6$. The modulus of the $101$st term is
  $7.03\ldots \times 10^{-37}$ and the next terms decrease faster than a
  geometric progression with ratio $0.00321$. Therefore $\theta (q,-6)>0.007$
  for $q\in (0,0.95]$. 

  Suppose that $q\in [0.95,1)$ and $x=-6$. We use the representation
    (\ref{equthetaG}).
    For the Leibniz series (in $q$) $G|_{x=-6}$, we obtain the estimation

    $$-0.1666\ldots =-1/6<G|_{x=-6}<-1/6+q/36<-1/6+1/36=-0.1388\ldots ~.$$
    Now we
    consider the term $\Theta^*$, see (\ref{equT}).
The product $\prod_{m=1}^{\infty}(1-q^{m-1}/6)$ is with positive factors.
    It is majorized by
$\prod_{m=1}^{\infty}(1-0.95^{m-1}/6)<0.0312$.  

    Denote by $k$ the smallest natural number for which $6q^k\leq 1$. As
    $6\cdot 0.95^{34}=1.04\ldots$ and $6\cdot 0.95^{35}=0.996\ldots$, one has
    $k\geq 35$ for $q\in [0.95,1)$. Hence
    $6q^{k-1}>1$, $6q^k>q$, 
    
    $$|1-6q^{k-\nu}|=6q^{k-\nu}-1<6q^{k-\nu}~,~\nu =1,~2,~\ldots ,~k-1~~~\,
    {\rm and}~~~\, 1-6q^{k+j}<1-q^{j+1}~,~j=0,~1,~\ldots ~.$$
    Thus

    $$\begin{array}{rcl}|\prod_{m=1}^{\infty}(1-6q^m)|&=&
      |\prod_{m=1}^{k-1}(1-6q^m)|\cdot \prod_{m=k}^{\infty}(1-6q^{m})\\ \\
      &\leq&|\prod_{m=1}^{k-1}(1-6q^m)|\cdot \prod_{m=k}^{\infty}(1-q^{m-k+1})\\ \\
      &\leq&6^{k-1}q^{k(k-1)/2}\cdot \prod_{m=1}^{\infty}(1-q^{m})~,~~~\,
      {\rm so}\\ \\ \prod_{m=1}^{\infty}(1-q^m)\cdot |\prod_{m=1}^{\infty}(1-6q^m)|
      &\leq&6^{k-1}q^{k(k-1)/2}\cdot (\prod_{m=1}^{\infty}(1-q^m))^2~.\end{array}$$
    One has $q^{k(k-1)/2}=q\times q^2\times \cdots \times q^{k-1}$. The function
    $q(1-q)^2$, $q\in (0,1)$, takes its maximal value for $q=1/3$ which equals
    $4/27$. Hence

    $$0<q^m(1-q^m)^2\leq 4/27~,~m=1,~\ldots ,~k-1~~~\, {\rm and}$$
    $$\begin{array}{ccl}
      6^{k-1}q^{k(k-1)/2}\cdot (\prod_{m=1}^{\infty}(1-q^m))^2&<&
      6^{k-1}q^{k(k-1)/2}\cdot (\prod_{m=1}^{k-1}(1-q^m))^2\\ \\ &=&
      \prod_{m=1}^{k-1}(6q^m(1-q^m)^2)\leq (6\cdot 4/27)^{k-1}
      \leq (8/9)^{34}=0.018\ldots ~.\end{array}$$
    Finally, $|\Theta^*|<0.0312\cdot 0.018=0.0005616$ and
    $\theta >0.1388-0.0005616=0.1382384>0$.
  \end{proof}

\begin{proof}[Proof of part (2) of Proposition~\ref{prop6}]

  The border $\partial \tilde{Q}$ of the rectangle $\tilde{Q}$ consists of the
  segments

  $$\begin{array}{lcl}
    J_1:=\{ q=0.4,~x\in [-10.5,~0]\}~,&&J_2:=\{ x=-10.5,~q\in [0.4,~1]\}~,\\ \\ 
  J_3:=\{ x=0,~q\in [0.4,~1]\}&{\rm and}&
  J_4:=\{ q=1,x\in [-10.5,~0]\}~.\end{array}$$
  For $(q,x)\in J_3$ (resp. $(q,x)\in J_4$), one has $\theta \equiv 1$ (resp.
  $\theta =1/(1-x)$, see Remark~\ref{remKatsnelson}), so $\theta >0.0049$.
  It suffices to prove that $\theta (q,x)>0.0049$ for
  $(q,x)\in J_1\cup J_2$. Indeed, the zeros of $\theta$
  depend continuously on $q$ and as $q$ increases in $(0,1)$, no real zeros are
  born, but can only be lost. If no zero crosses the set
  $\partial \tilde{Q}$, then there are no zeros of $\theta$ inside
  $\tilde{Q}$.

      For $(q,x)\in J_1$, one finds numerically that
      $\theta (q,x)\geq 1.4>0$. Indeed, this is the case of its
      truncation up to the 300th term. For $x=-10.5$, the latter equals
      $2.3\ldots \times 10^{-17661}$ and the moduli of the subsequent
      terms decrease
      at least as fast as a geometric progression with ratio
      $1.7\ldots \times 10^{-119}$.

      Similarly for $(q,x)\in [0.4,0.87]\times \{ -10.5\}$,
      one finds that the same
      truncation takes only values $\geq 0.005$. For $q=0.87$, the 300th term
      equals $4.4\ldots \times 10^{-2425}$ and the moduli of the terms
      decrease faster than 
      a progression with ratio $6.5\times 10^{-18}$.
So now we
      concentrate on the case $x=-10.5$, $q\in [0.87,1]$.
        Consider for $q\in [0,1]$ the function

$$U(q):=(1-q)\cdot (1-q/10.5)\cdot (1-10.5\cdot q)$$
        and the product $\tilde{U}:=\prod_{m=1}^{\infty}|U(q^m)|$.
        (One needs to study $U$ on the whole interval $[0,1]$, because of the
        presence of powers $q^m$ in $\tilde{U}$.) 
        The
minimal value of $U$ (attained for $q=0.5355870925\ldots$) is
$-2.037759887\ldots >-2.04$. We list the solutions in $(0,1)$
to certain equations:

$$\begin{array}{lll}
  U(q)=1/1.5&b=0.02962269556\ldots&\\ \\
  U(q)=1/2.04&t=0.04608175109\ldots&\\ \\
  U(q)=-1/2.04&s_1=0.1510312684\ldots&s_2=0.9392525427\ldots\\ \\
  U(q)=-1/1.5&w_1=0.1733320522\ldots&w_2=0.9151706313\ldots\\ \\ 
  U(q)=-1&m_1=0.2199411286\ldots&m_2=0.8652006788\ldots\\ \\ 
  U(q)=-1.5&r_1=0.3078774331\ldots&r_2=0.7722315515\ldots
  \end{array}$$
\begin{nota}
  {\rm For each $q\in (0,1]$ fixed, we denote by
    $\sharp ([\alpha ,\beta ])$ the quantity of numbers of the
    form $q^m$ contained in the interval $[\alpha ,\beta ]$.}
\end{nota}
Suppose that $q\in [0.87,1]$. One has

  $$r_2/r_1=2.508\ldots <2.51<2.851393456\ldots =0.87\cdot (s_1/t)~,
  ~~~\, {\rm where}~~~\, s_1/t=3.277463743\ldots ~.$$
This means that $\sharp ([r_1,r_2])\leq \sharp ([t,s_1])~(A)$. Next, 
  
    $$r_1/m_1=1.399817465\ldots <t/b=1.555623154\ldots ~,~~~\, {\rm so}~~~\, 
  \sharp ([m_1,r_1])\leq \sharp ([b,t])-1~(B)$$
  and in the same way

  $$m_2/r_2=1.120390221\ldots <1.147656734\ldots =w_1/s_1~,~~~\,
  {\rm so}~~~\, \sharp ([r_2,m_2])\leq \sharp ([s_1,w_1])-1~(C)~.$$
  We majorize now the quantities $|U(q^m)|$ (see the product $\tilde{U}$). 
  If $q^m\in I$, then $|U(q^m)|\leq \delta$,
  where the correspondence between $I$ and $\delta$
  is given by the rule:

  $$\begin{array}{ll}
    {\rm For}~q^m\in [b,t) \cup (s_1,w_1]~,~~~\, \delta =1/(1.5)~;&
    {\rm for}~q^m\in [t,s_1]~,~~~\, \delta =1/(2.04)~;\\ \\ 
    {\rm for}~q^m\in [m_1,r_1) \cup (r_2,m_2]~,~~~\, \delta =1.5~;&
    {\rm for}~q^m\in [r_1,r_2]~,~~~\, \delta =2.04~.
  \end{array}$$
  This rule and the inequalities $(A)$, $(B)$ and $(C)$ imply the inequalities

  \begin{equation}\label{equ3ineq}
    \begin{array}{lll}
      \prod_{q^m\in [r_1,r_2]}|U(q^m)|\prod_{q^m\in [t,s_1]}|U(q^m)|\leq 1~,&&
      \prod_{q^m\in [m_1,r_1)}|U(q^m)|\prod_{q^m\in [b,t)}|U(q^m)|\leq 1.5~,\\ \\
           {\rm and}&&
           \prod_{q^m\in (r_2,m_2]}|U(q^m)|\prod_{q^m\in (s_1,w_1]}|U(q^m)|\leq 1.5~.
    \end{array}
    \end{equation}
  The factors $|U(q^m)|$ in $\tilde{U}$ with $q^m$ outside these $6$
  intervals are of modulus $<1$. Hence $\tilde{U}\leq 1.5^2=2.25$.

  This estimation, however, is not sufficient and we give a better one
  as follows. Set $\rho :=0.87$ and $\eta :=1/10.5$.
  Then for $q\in [\rho ,1)$ and for
    $\varepsilon >0$ small enough,
    each of the following $7$ non-intersecting intervals
    contains at least one number $q^m$; 
    all intervals are subsets of the interval $(t,s_1)$ due to
    $\eta \rho^{9/2}=0.050\ldots >t$ and $\eta \rho^{-5/2}=0.134\ldots <s_1$:

    $$\begin{array}{ll}
      I_1:=[\eta \rho^{9/2}-4\varepsilon ,
      \eta \rho^{7/2}-3\varepsilon )~,&
  I_2:=[\eta \rho^{7/2}-3\varepsilon ,\eta \rho^{5/2}-2\varepsilon )~,\\ \\ 
        I_3:=[\eta \rho^{5/2}-2\varepsilon ,\eta \rho^{3/2}-\varepsilon )~,&
      I_4:=[\eta \rho^{3/2}-\varepsilon ,\eta \rho^{1/2})~,\\ \\      
I_5:=[\eta \rho^{1/2},\eta \rho^{-1/2}]~,& 
I_6:=(\eta \rho^{-1/2},\eta \rho^{-3/2}+\varepsilon ]~,\\ \\ {\rm and}&
      I_7:=(\eta \rho^{-3/2}+\varepsilon ,\eta \rho^{-5/2}+2\varepsilon ]~.
      \end{array}$$
    The quantity $|U(q^m)|$ with $q^m\in I_5$ is majorized by

    $$\max (|U(\eta \rho^{1/2})|,|U(\eta \rho^{-1/2})|)<0.1308043268=:u_5$$
    while $|U(q^m)|$ with $q^m\in I_j$, $1\leq j\leq 7$, $j\neq 5$,  
    is majorized by a number arbitrarily
    close to

    $$\begin{array}{ll}
      |U(\eta \rho^{9/2})|<0.4397976129=:u_1~,&
      |U(\eta \rho^{7/2})|<0.361198525=:u_2~,\\ \\
      |U(\eta \rho^{5/2})|<0.2724861417=:u_3~,&
      |U(\eta \rho^{3/2})|<0.1726682682=:u_4~,\\ \\
      |U(\eta \rho^{-3/2})|<0.202756323=:u_6~,&
      |U(\eta \rho^{-5/2})|<0.355643912=:u_7~.
    \end{array}$$
    We remind that for the moment we know only that $\tilde{U}\leq 2.25$.
    The $7$ quantities $|U(q^m)|$ majorized by $u_j$ were initially
    majorized by $1/2.04$. Hence

    $$\tilde{U}\leq 2.25\cdot (\prod_{j=1}^7u_j)\cdot 2.04^7=
    0.01036522792\ldots <0.0104~.$$
%

          At the same time for $x=-1/10.5$ and $q\in [0.87,1)$, one minorizes
            $|G|$ by $1/10.5-1/10.5^2=0.086\ldots$, because $G$ is a
            Leibniz series. Hence for $x=-1/10.5$ and $q\in [0.87,1)$, one has
              $\theta (q,x)>0.086-0.0104=0.0756>0.0049$.

\end{proof}

\begin{proof}[Proof of Proposition~\ref{prop2.2}]

  Part (1). For $q\in (-1,0)\cup (0,1)$, equality (\ref{equdecomp})
  holds true.
  For $x<0$ and $q\in (-1,0)$, the quantities $x^2/q$ and $qx^2$ are negative
  while $qx>0$ and $q^4\in (0,1)$.
  For $q\in (0,1)$, any real zero of $\theta (q,.)$ is $<-6$
  (see Proposition~\ref{prop6}), and one has 
  $\theta (q,0)=1>0$, so it is also true that 
  $\theta (q,x)>0$ for $x\in [-6,0]$. Hence for $q\in (-1,0)$ and $x<0$,
  one has $\theta_1>0$ and $qx\theta_2>0$
  when $x^2/q\in [-6,0]$ and $qx^2\in [-6,0]$. The latter two inequalities
  hold true for $x\in [-2.4,0]$ and $q\in (-1,-0.97]$. Thus it remains to prove
      part (1) of the proposition for $q\in (-0.97,0)$.

      The function $\theta_{\diamond}:=\sum_{j=0}^{100}(-2.4)^jq^{j(j+1)/2}$
      is a polynomial in $q$. One finds numerically that its minimal value for
      $q\in [-0.97,0]$ is $>0.2$. The sum
      $$S_*:=\sum_{j=101}^{\infty}|(-2.4)^j(-0.97)^{j(j+1)/2}|~,$$
      which majorizes $|\sum_{j=101}^{\infty}(-2.4)^jq^{j(j+1)/2}|$, is $<10^{-29}$.
      Indeed, its
      first term equals $1.8\ldots \times 10^{-30}$ and the terms decrease
      faster than a geometric progression with ratio
      $2.4\cdot 0.97^{102}=0.107\ldots$. Hence 
      $\theta (q,-2.4)>0$ for $q\in [-0.97,0)$. This proves part~(1).

        Part (2). We remind first that for $q\in (-1,0)$, one has
        $\theta (q,-1/q)<0$,
        see~\cite[Proposition~4.5]{KoPRSE2}, and
        $\theta (q,1)>0$; the latter inequality follows from
        $\theta (q,1)=\sum_{j=0}^{\infty}q^{j(j+1)/2}=
        \frac{1-q^2}{1-q}\cdot \frac{1-q^4}{1-q^3}\cdot \frac{1-q^6}{1-q^5}
        \ldots$, 
        see Problem 55 in Part I, Chapter 1 of~\cite{PoSz}. From these two
        inequalities one deduces that the leftmost positive zero of
        $\theta (q,.)$ belongs to the interval $(1,-1/q)$
        and the next one is $>-1/q$.
        For $q\in (-5/12,0)$ (with $5/12=0.416666\ldots$),
        one has $2.4<-1/q$, so the second positive zero
        is $>2.4$. For $q\in [-0.97,-5/12]$, one considers the polynomial
        (in $q$)
        $\theta_{\ddagger}:=\sum_{j=0}^{100}2.4^jq^{j(j+1)/2}$. One finds numerically that
        its maximal value is $<-0.1$. On the other hand the sum $S_*$ 
        is $<10^{-29}$, so for
        $q\in [-0.97,0)$, one has $\theta (q,2.4)<0$
          and the second positive zero is $>2.4$.

          For $q\in (-1,-0.97)$, we use the decomposition (\ref{equdecomp}).
          The functions $\theta_1$ and $\theta_2$ are even functions in~$x$
          for every fixed $q\in (-1,0)$. The
          smallest positive zeros $\zeta_1$ and $\zeta_2$ of
          $\theta_1$ and $\theta_2$ are $>2.4$ and $\zeta_2>\zeta_1$, so

          $$\theta_1|_{x=\zeta_1}=0>(qx\theta_2)|_{x=\zeta_1}~~~\,
          {\rm hence}~~~\, \theta (q,\zeta_1)<0~.$$
          This means that the second positive zero of $\theta (q,.)$
          is $>\zeta_1$ hence $>2.4$ for $q\in (-1,-0.97)$,
          so for $q\in (-1,0)$ as well.

          Part (3). We remind that $\theta (q,0)\equiv 1$ and
          $\theta (-1,x)=(1-x)/(1+x^2)$, see Remark~\ref{remKatsnelson}.
          Suppose first that
          $(q,x)\in \tilde{Q}_-$. One finds numerically
          that for $q=-0.75$ and
          $x\in [-4,0]$, and for $x=3.1$ and $q\in [-0.97,-0.75]$,
          the function $\theta$ takes only values $>0.2$. Suppose that
          $x=3.1$ and $q\in [-1,-0.97]$. Hence in the equality (\ref{equdecomp})
          both terms $\theta_1$ and $qx\theta_2$ are positive. Indeed, $qx>0$,
          $q^4>0.4$ and $-10.5<x^2/q<qx^2<0$, so one can apply
          part (2) of Proposition~\ref{prop6}. As $\theta_1>0.0049$,
          one concludes that $\theta >0.0049$.

          Consider now the case $(q,x)\in \tilde{Q}_+$. Numerical check shows
          that for $x=3.2$ and $q\in [-0.94,-0.8]$, $\theta <-0.08$; and for
          $q=-0.8$ and $x\in [0,4]$, $\theta$ has a single positive zero
          (which is close to $1$).

          Suppose that $x=3.2$ and $q\in [-1,-0.94]$. At the first positive
          zero of $\theta_2$ one has $\theta <0$, so the second positive zero of
          $\theta$ is larger than $x_*$, where $x_*^2q=-10.5$,
          see part~(2) of Proposition~\ref{prop6}. Hence
          $x_*>10.5^{1/2}=3.240370349\ldots >3.2$ and for $(q,x)\in \tilde{Q}_+$,
          the only real zero of $\theta$ is its smallest positive zero. 
          
          For $q=-0.78$, the first three positive zeros of $\theta$ equal
          $1.02\ldots$, $2.75\ldots$ and $3.16\ldots$. They are $<3.2$.

\end{proof}

\section{Proof of Theorem~\protect\ref{tmseparl}\protect\label{secprtmseparl}}

\subsection{Proof of part (1) of Theorem~\protect\ref{tmseparl}}

We use the representation (\ref{equthetaG}). 
We consider separately $|\Theta^*(q,x)|$ and $|G(q,x)|$.

\begin{lm}\label{lmGb}
  Suppose that $a\geq 5$, $b\geq 0$
  and $q\in (0,1)$. Set $x:=-a+bi$. Then for each $(a,q)$ fixed,
  the quantity $|G|$ is majorized by a decreasing
  function in $b$ which coincides with $|G|$ for $b=0$.
\end{lm}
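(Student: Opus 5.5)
The plan is to split the series of $G$ into consecutive pairs of terms and bound $|G|$ by the sum of the moduli of the pairs. Each pair will turn out to have modulus decreasing in $b$, and for $b=0$ all pairs will have the same sign.

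\textbf{Step 1: grouping.} The $j$th term of $G$ ($j\geq 1$) is $q^{j(j-1)/2}x^{-j}$. Grouping the terms with indices $j=2k+1$ and $j=2k+2$, $k\geq 0$, gives
$$G=\sum_{k=0}^{\infty}q^{k(2k+1)}x^{-(2k+1)}\bigl(1+q^{2k+1}/x\bigr)~.$$
Set $c_k:=q^{2k+1}\in (0,1)$ and $r:=|x|=(a^2+b^2)^{1/2}$. Since $|1+c_k/x|=|x+c_k|/|x|$, the triangle inequality yields
$$|G|\leq M(b):=\sum_{k=0}^{\infty}q^{k(2k+1)}\,\frac{((a-c_k)^2+b^2)^{1/2}}{(a^2+b^2)^{k+1}}~.$$
The series converges uniformly because $r\geq a\geq 5$, so $M$ is a well-defined function of $b$.

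\textbf{Step 2: each term of $M$ is decreasing in $b$.} Put $B:=b^2$ and $c:=c_k$. It suffices to show that
$$f(B):=\frac{(a-c)^2+B}{(a^2+B)^{2k+2}}$$
is decreasing for $B\geq 0$. The sign of $f'(B)$ is the sign of
$$(a^2+B)-(2k+2)\bigl((a-c)^2+B\bigr)\leq a^2-2(a-c)^2-B~,$$
using $k\geq 0$. This is negative as soon as $a-c>a/\sqrt{2}$, i.e. $c<a(1-1/\sqrt{2})$. Here $c<1$ and $a(1-1/\sqrt{2})\geq 5\cdot 0.29\ldots>1.46$, so the condition holds. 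Hence every term of $M$, and therefore $M$ itself, is strictly decreasing in $b\geq 0$.

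\textbf{Step 3: equality at $b=0$.} For $x=-a$, the $k$th grouped term equals
$$-q^{k(2k+1)}a^{-(2k+1)}\bigl(1-c_k/a\bigr)~.$$
Since $c_k/a<1$, every such term is negative. So all grouped terms have the same sign, and $|G|=\sum_k q^{k(2k+1)}a^{-(2k+1)}(1-c_k/a)=M(0)$.

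The only nontrivial point is Step 2. The factor $|1+c_k/x|$ alone increases with $b$, and this must be compensated by the decrease of $r^{-(2k+1)}$. The worst case is $k=0$, where the requirement reduces to $a^2<2(a-c)^2$; this is where the hypothesis $a\geq 5$ is used, with a wide margin.
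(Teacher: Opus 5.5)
Your proof is correct and is essentially the paper's argument. You pair consecutive terms of $G$, show that the modulus of each pair decreases in $b$, and use that all pairs are negative reals at $b=0$, so the triangle inequality becomes an equality there. The only difference is the sign check of the derivative: your bound $(2k+2)\geq 2$, which reduces everything to $a^2<2(a-q^{2k+1})^2$, is slightly cleaner than the paper's check that $\mathcal{H}(5)<0$ and $\mathcal{H}'(a)<0$.
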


The lemmas of this subsection are proved at its end.

\begin{lm}\label{lmFU}
  Set $x:=-a+bi$, $a\geq 5$, $b\geq 0$,

  $$F_k:=|(1+q^kx)(1+q^k/x)|^2~,~~~\, k\in \mathbb{N}^*~~~\,
  {\rm and}~~~\, U:=|(1+1/x)(1+qx)(1+q/x)|^2~.$$
  Hence $|\Theta^*|^2=(\prod_{k=1}^{\infty}(1-q^k))^2\cdot
  U\cdot \prod_{k=1}^{\infty}F_k$. 
  \vspace{1mm}
  
  (1) For $q\in (0,1)$ and $k\in \mathbb{N}^*$, one has
  $\partial F_k/\partial b\geq 0$
  with equality only for $b=0$.
  \vspace{1mm}
  
  (2) For $q\in [0.3,1)\supset (\tilde{q}_1,1)$, one has
    $\partial U/\partial b\geq 0$
    with equality only for $b=0$.
\end{lm}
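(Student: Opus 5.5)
The plan is to reduce everything to the elementary behaviour, as functions of $b$, of the squared moduli of the individual linear factors appearing in (\ref{equT}). The identity $|\Theta^*|^2=(\prod_{k=1}^{\infty}(1-q^k))^2\cdot U\cdot \prod_{k=1}^{\infty}F_k$ is immediate. Indeed, the second line of (\ref{equT}) writes $\Theta^*$ as $(1+1/x)\prod_{m\geq 1}(1-q^m)(1+xq^m)(1+q^m/x)$. We take squared moduli, use that the factors $1-q^m$ are real and positive, and regroup the factor with $m=1$ together with $(1+1/x)$ into $U$ and the factors with $m=k$ into $F_k$. Here we have to read $\prod_{k=1}^{\infty}F_k$ as $\prod_{k\geq 2}F_k$, since the $m=1$ factors are already in $U$. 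In any case, what matters for the monotonicity claims is only the monotonicity of each factor.

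The key computation is the following. For $x=-a+bi$ and a real constant $c>0$,
$$|1+cx|^2=(1-ca)^2+c^2b^2~,~~~\, |1+c/x|^2=\frac{|x+c|^2}{|x|^2}=\frac{(a-c)^2+b^2}{a^2+b^2}~.$$
The first expression has $b$-derivative $2c^2b$. For the second one, the derivative in $b$ equals
$$\frac{2b\,[(a^2+b^2)-((a-c)^2+b^2)]}{(a^2+b^2)^2}=\frac{2b\,c\,(2a-c)}{(a^2+b^2)^2}~.$$
This is $\geq 0$, and $>0$ for $b>0$, whenever $0<c<2a$. In all our factors $c\in \{q^k,1\}\subset (0,1]$ and $a\geq 5$, so $c<2a$ holds with a large margin. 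Hence every factor $|1+cx|^2$ and $|1+c/x|^2$ is a nondecreasing function of $b\geq 0$, with zero derivative exactly at $b=0$.

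Next we pass to products. If $f,g\geq 0$ satisfy $f',g'\geq 0$, then $(fg)'=f'g+fg'\geq 0$. For $b>0$ all factors are strictly positive: $|1+cx|^2\geq c^2b^2>0$ and $|1+c/x|^2\geq b^2/(a^2+b^2)>0$. Their derivatives are strictly positive there as well, so the derivative of the product is strictly positive for $b>0$. At $b=0$ every derivative vanishes, so the derivative of the product vanishes too. Applying this to $F_k=|1+q^kx|^2\,|1+q^k/x|^2$ gives part (1). Applying it to $U=|1+1/x|^2\,|1+qx|^2\,|1+q/x|^2$ with $c=1$ and $c=q$ gives part (2). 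This argument actually works for all $q\in(0,1)$, and a fortiori for $q\in[0.3,1)$.

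The only point requiring care is the sign of $2a-c$ in the derivative of $|1+c/x|^2$, and this is where the hypothesis $a\geq 5$ (indeed $a>1/2$ suffices) enters. I do not expect any real obstacle beyond keeping track of the strict inequality for $b>0$ versus equality at $b=0$.
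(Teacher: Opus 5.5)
Your proof is correct, and it takes a genuinely different and simpler route than the paper.

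\textbf{The paper's route.} The paper differentiates the whole products at once.
\begin{itemize}
\item For part (1) it computes $\partial F_k/\partial b=2bq^kF_{k,1}/(a^2+b^2)^2$ with $F_{k,1}=(a^2+b^2)^2q^k-4a^2q^k+2aq^{2k}+2a-q^k$, and bounds $F_{k,1}$ from below using $a^2\geq 4$.
\item For part (2) it uses computer algebra to write $\partial U/\partial b=2bH/(a^2+b^2)^3$ with $H=K+Lb^2+3a^2q^2b^4+q^2b^6$. It then proves $K,L>0$ coefficient by coefficient, and this is where $a\geq 5$ and $q\geq 0.3$ enter.
\end{itemize}

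\textbf{Your route.} You observe that for $q>0$ each single factor $|1+cx|^2$ and $|1+c/x|^2$, with $c\in\{1,q^k\}$, is already increasing in $b$ as soon as $c<2a$. The product rule then finishes the argument.

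Expanding your product-rule expression reproduces the paper's polynomials exactly. For part (1), $f_2'f_3+f_2f_3'$ equals $2bcF_{k,1}/(a^2+b^2)^2$ with $c=q^k$. For part (2), $H$ is precisely
\[
(2a-1)\bigl((1-qa)^2+q^2b^2\bigr)\bigl((a-q)^2+b^2\bigr)+q^2(a^2+b^2)\bigl((a-1)^2+b^2\bigr)\bigl((a-q)^2+b^2\bigr)+q(2a-q)\bigl((a-1)^2+b^2\bigr)\bigl((1-qa)^2+q^2b^2\bigr).
\]
The difference is that in your form every summand is visibly nonnegative.

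\textbf{What each approach buys.} Your argument needs no computer algebra and is valid for all $q\in(0,1)$ and all $a>1/2$. In particular it shows the restriction $q\geq 0.3$ in part (2) is superfluous. You are also right that the displayed identity should read $\prod_{k\geq 2}F_k$, since $F_1$ is already contained in $U$.

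The paper's expand-and-bound template is uniform with the later lemmas, where your factor-by-factor monotonicity genuinely fails:
\begin{itemize}
\item For $q<0$ (Lemma~\ref{lm2.2leftQ}), the factors $|1-T/x|^2=((a+T)^2+b^2)/(a^2+b^2)$ decrease in $b$.
\item For $\mathrm{Re}\,x=a>0$ (part (3)), $|1+1/x|^2=((a+1)^2+b^2)/(a^2+b^2)$ decreases in $b$.
\end{itemize}
In those cases grouping factors and expanding the derivative is really needed.
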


In part (2) of the lemma we limit the proof to the case $q\geq 0.3$,
because there are no complex zeros for $q\leq 0.3$.
The lemma implies $\partial |\Theta^*|/\partial b>0$. 
For $\xi_j$ a real zero of $\theta (q,.)$, one has $-\xi_j>5$, see part (1)
of Proposition~\ref{prop6}.
For $x=\xi_j$, it is true that $|\Theta^*|=|G|$,
because $(\Theta^*-G)(q,\xi_j)=0$.
Lemmas~\ref{lmGb} and \ref{lmFU} imply that for $x=\xi_j\pm bi$, $b>0$,
one has $|\Theta^*(q,x)|>|G(q,x)|$, so $\theta (q,x)\neq 0$.

Suppose that $q\in (\tilde{q}_1,\tilde{q}_2]$. Then there is just
one complex conjugate pair of $\theta (q,.)$ and the condition
$\theta (q,\xi_3\pm bi)\neq 0$ means that for
$\varepsilon >0$ sufficiently small, the line
$\mathcal{S}_{-\xi_3-\varepsilon}$ is a separating line.

For $q\in (\tilde{q}_2,\tilde{q}_3]$ and $\varepsilon >0$ sufficiently small,
  the line
  $\mathcal{S}_{-\xi_5-\varepsilon}$ is a separating line.
  Indeed, for $q=\tilde{q}_2$, the complex conjugate pair of $\theta$
  is to the right of the line $\mathcal{S}_{-\xi_3-\varepsilon}$ hence to the right
  of $\mathcal{S}_{-\xi_5-\varepsilon}$ as well. For $\eta >0$ sufficiently small,
  the complex conjugate pair born from the double zero for $q=\tilde{q}_2$
  remains to the right of the line $\mathcal{S}_{-\xi_5-\varepsilon}$ for
  $q=\tilde{q}_2+\eta$. Hence it remains to its right for
  $q\in (\tilde{q}_2,\tilde{q}_3]$. 

  Continuing like this one finds that for $q\in (\tilde{q}_k,\tilde{q}_{k+1}]$,
    $\mathcal{S}_{-\xi_{2k+1} -\varepsilon}$ is a separating line.

    \begin{rem}
      {\rm The above reasoning shows also that for $\varepsilon >0$
        sufficiently small and for $q\in (\tilde{q}_k,\tilde{q}_{k+1}]$,
      to the right of the line $\mathcal{S}_{-\xi_{2s+1} -\varepsilon}$, $s>k$,
      remain all complex conjugate pairs and the real zeros
      $\xi_{2k+1}$, $\xi_{2k+2}$, $\ldots$, $\xi_{2s}$, while all other real
      zeros remain to its left.}
        \end{rem}

\begin{proof}[Proof of Lemma~\ref{lmGb}] 
Consider the sum of
  two consecutive terms of~$G$:

  $$t_k:=q^{(2k-1)(k-1)}x^{-2k+1}+q^{(2k-1)k}x^{-2k}=
  q^{(2k-1)(k-1)}x^{-2k}(x+q^{2k-1})~,~~~\, k=1,~2,\ldots ~.$$
Thus $G=\sum_{k=1}^{\infty}t_k$.
  Set $q_*:=q^{2k-1}$ and $q_{\dagger}:=q^{(2k-1)(2k-2)}$.
  One checks directly that

  $$|x|^2=a^2+b^2~~~\, {\rm and}~~~\, |x+q_*|^2=
  (a-q_*)^2+b^2~,~~~\, {\rm so}~~~\,
  |t_k|^2=q_{\dagger}((a-q_*)^2+b^2)/(a^2+b^2)^{2k}$$
and hence  

$$\partial (|t_k|^2)/\partial b=
2q_{\dagger}b((a^2+b^2)(1-2k)+4kq_*a-2kq_*^2)/(a^2+b^2)^{k+1}~.$$
For $k\in \mathbb{N}^*$, $b\geq 0$ and $a\geq 5$, one has
$\partial (|t_k|^2)/\partial b\leq 0$ with equality only for $b=0$. Indeed,

$$-(2k-1)(a^2+b^2)+4kq_*a-2kq_*^2\leq -(2k-1)a^2+4kq_*a-2kq_*^2=:
\mathcal{H}(a)~,$$
where
$$\mathcal{H}(5)=-25(2k-1)+20kq_*-2kq_*^2<5(-10k+5+4kq_*)<5(-6k+5)<0$$
and for $a\geq 5$, 
$$\mathcal{H}'(a)=-2a(2k-1)+4kq_*\leq -10(2k-1)+4kq_*
\leq -20k+10+4k=-16k+10<0~.$$

For $b=0$, all quantities $t_k$ are negative real numbers,
i.~e. they have
  the same argument. Therefore for $b=0$, the modulus of their sum equals
  the sum of their moduli. For $b>0$,
  the modulus of each of them is smaller than its modulus for $b=0$.
  Thus $|G|\leq \sum_{k=1}^{\infty}|t_k|$, where the right-hand side
  is a decreasing
  function in $b$ for $b\geq 0$ and there is equality only for $b=0$.
\end{proof}

\begin{proof}[Proof of Lemma~\ref{lmFU}]
  Part (1). One checks directly that

  $$\begin{array}{ccccc}
    \partial F_k/\partial b&=&2bq^kF_{k,1}/(a^2+b^2)^2~,&&{\rm where}\\ \\
    F_{k,1}&:=&(a^2+b^2)^2q^k-4a^2q^k+2aq^{2k}+2a-q^k&\geq&
    a^4q^k-4a^2q^k+2aq^{2k}+2a-q^k\\ \\
    &=&(a^2-4)a^2q^k+2aq^{2k}+(2a-q^k)&>&0\end{array}$$
    from which part (1) follows.

 Part (2). It is true that $\partial U/\partial b=2bH/(a^2+b^2)^3$, where
 $H:=K+Lb^2+3a^2q^2b^4+q^2b^6$, with 
    $$\begin{array}{cclccl}
      K&:=&q^2a^6+K_4a^4+K_3a^3+K_2a^2+K_1a+K_0~,&K_0&:=&-2q^2~,\\ \\
      K_1&:=&4q^3+4q^2+4q~,&K_2&:=&-q^4-8q^3-9q^2-8q-1~,\\ \\
      K_3&:=&2q^4+4q^3+16q^2+4q+2~,&K_4&:=&-4q^3-4q^2-4q\end{array}$$
      and

      $$\begin{array}{cclcccl}
        L&:=&3q^2a^4+L_2a^2+L_1a+L_0~,&&L_0&:=&-q^4-q^2-1~,\\ \\
        L_1&:=&2q^4+4q^3+4q+2~,&&L_2&:=&-4q^3-4q^2-4q~.\end{array}$$
      We show that $K>0$ and $L>0$ which implies part~(2).
      It is clear
      that for $a\geq 5$ and $q\in [0.3,1)$, one has $L_1a>|L_0|$ and

        $$3q^2a^2\geq 0.9\cdot 25\cdot q=23.4q~,~~~\, {\rm so}~~~\,
        3q^2a^4>12qa^2>(4q^2+4q+4)qa^2=|L_2|a^2$$
        hence $L>0$. Next, $K_1a>|K_0|$ and $K_3a^3>|K_2|a^2$. The function
        $f:=25q-(4q^2+4q+4)$ is increasing for $q\in [0.3,1)$ (one has
          $f'=25-8q-4>13>0$) and $f(0.3)=1.94$. Therefore

          $$q^2a^6+K_4a^4=a^4q(qa^2-4q^2-4q-4)
          \geq a^4qf(q)\geq 5^4\cdot 0.3\cdot 1.94>0~,$$
          so $K>0$, $H>0$ and $\partial U/\partial b\geq 0$ with equality only
          for $b=0$.

\end{proof}

\subsection{Proof of part (2) of Theorem~\protect\ref{tmseparl}}

We set $x:=-a+bi$ (where $a,b\in \mathbb{R}_+$) and
$\tau :=-q$, so $\tau \in (0,1)$.

\begin{lm}\label{lm2.2leftQ}
  For fixed $\tau$ and $a\geq 2.4$,
  the quantity $|\Theta^*|$ is an increasing function
  in~$b$.
\end{lm}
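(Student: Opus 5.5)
The plan is to work with $s:=b^2$ and the logarithmic derivative $\partial_s\log|\Theta^*|^2$. By (\ref{equT}), up to the $b$-independent constant $\prod(1-q^m)^2$, the quantity $|\Theta^*|^2$ is a product of factors $|1+q^mx|^2$, $m\geq 1$, and $|1+q^{j}/x|^2$, $j\geq 0$. With $q=-\tau$ and $x=-a+bi$, each factor has an elementary closed form.

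\emph{Factors $|1+q^mx|^2$.} For $m$ even this equals $(1-\tau^ma)^2+\tau^{2m}s$. For $m$ odd it equals $(1+\tau^ma)^2+\tau^{2m}s$. Both are increasing in $s$, and their log-derivative is
$$\frac{\tau^{2m}}{(1\mp\tau^ma)^2+\tau^{2m}s}~.$$

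\emph{Factors $|1+q^j/x|^2$ with $j$ even.} Write $d=\tau^j\leq 1<a$. The factor is $((a-d)^2+s)/(a^2+s)$. It is increasing, since $a^2>(a-d)^2$, and its log-derivative is
$$\frac{(2a-d)d}{((a-d)^2+s)(a^2+s)}>0~.$$
The case $j=0$ is the factor $|1+1/x|^2$.

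\emph{Factors $|1+q^j/x|^2$ with $j$ odd.} Write $c=\tau^j$. The factor is $((a+c)^2+s)/(a^2+s)$. This is the only decreasing type, with negative log-derivative
$$-\frac{(2a+c)c}{((a+c)^2+s)(a^2+s)}~.$$

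So the whole proof reduces to showing that the positive contributions dominate the negative ones.

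To do this I will pair each odd $j$ (with $c=\tau^j$) with the even index $j-1$ (with $d=\tau^{j-1}=c/\tau\geq c$). Because $(a+c)^2\geq (a-d)^2$, the sum of the two contributions is nonnegative as soon as $(2a-d)d\geq (2a+c)c$. With $c=\tau d$ this becomes
$$2a(1-\tau)\geq d(1+\tau^2)~.$$
The worst case is $d=1$, which gives $2a(1-\tau)\geq 1+\tau^2$. Using $a\geq 2.4$, this holds for all $\tau\leq\tau_0$ with $\tau_0\approx 0.6$. For such $\tau$ the lemma then follows, and the increasing factors $|1+q^mx|^2$ only help. Strictness comes from the $m=1$ factor, whose contribution is positive for $s>0$.

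The main obstacle is $\tau$ close to $1$, where the pairing inequality fails for the first few $j$ (those with $d$ near $1$). For these I would bring in the factors $|1+q^mx|^2$. When $\tau^ma$ is near $1$ and $m$ is even, the denominator $(1-\tau^ma)^2+\tau^{2m}s$ is small, so the contribution is large. More simply, for every $m$ the contribution is of size roughly $\tau^{2m}/(1+\tau^ma)^2$, while the deficit of a bad pair is only of size $O((1-\tau)\cdot\tau^{j})$ divided by $a^4$-type denominators. The concrete plan has three steps.
\begin{enumerate}
\item Give the $m$-th factor $|1+q^mx|^2$ (with $m=j$) to the $j$-th bad pair.
\item Check the resulting three-term inequality in the variables $(c,\tau,s)$, for $a\geq 2.4$ and all $s\geq 0$, after clearing denominators. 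This is a polynomial inequality in $s$ whose coefficients I would verify to be nonnegative, in the same style as the proof of Lemma~\ref{lmFU}.
\item If a small range of $(\tau,s)$ resists, treat it by a direct numerical check, as in Section~\ref{secprprop2.2}.
\end{enumerate}
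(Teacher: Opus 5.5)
Your four log-derivative formulas are correct. The pairing of each odd $j$ with $j-1$ does settle the case $2a(1-\tau)\geq 1+\tau^2$; at $a=2.4$ the threshold is $\tau\approx 0.692$, not $0.6$. But the case $\tau>0.692$, which is where the lemma has content, is only announced, and the reason you give for why it should work is wrong.

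For a bad pair the deficit numerator is $(2a+c)c-(2a-d)d=d^2(1+\tau^2)-2ad(1-\tau)$, which tends to $2d^2$ as $\tau\rightarrow 1^-$. So it is of order $d^2$, not $O((1-\tau)\tau^j)$. That is the same order as the compensating term $c^2/((1+ca)^2+c^2s)$, so there is no separation of scales to exploit. Nor can a leftover range be handed to numerics: the number of bad pairs grows like $(1-\tau)^{-1}\log (1/(1-\tau))$, and $s$ ranges over $[0,\infty )$. As written, the main case is not proved.

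It can be closed in a few lines. Put $\alpha :=(a-d)^2$, $\beta :=(a+c)^2$, $\gamma :=a^2$. Then the pair contributes
\[ \frac{1}{\alpha +s}+\frac{1}{\beta +s}-\frac{2}{\gamma +s}=\frac{(2\gamma -\alpha -\beta )s+\gamma (\alpha +\beta )-2\alpha \beta}{(\alpha +s)(\beta +s)(\gamma +s)}~. \]
The constant term equals $(a+c)^2(2a-d)d-(a-d)^2(2a+c)c\geq d^2(2a^2-d^2)>0$, and $\alpha +\beta -2\gamma <2d^2$. Hence the pair is $\geq -2d^2s/(((a-1)^2+s)(a^2+s)^2)$, while the $m=j$ term is $\geq \tau^2d^2/((a+1)^2+s)$. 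A bad pair forces $1+\tau^2>4.8(1-\tau)$, so $\tau^2>0.478$. It therefore suffices that $0.478((a-1)^2+s)(a^2+s)^2\geq 2s((a+1)^2+s)$ for $s\geq 0$ and $a\geq 2.4$. At $a=2.4$ the difference is about $31.1+3.5s+4.4s^2+0.48s^3$, and its $a$-derivative is positive. The resulting inequality is strict, which also gives strictness once the $m=1$ factor has been used up.

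With this added, your argument is a lighter version of the paper's, and the grouping is in fact the same. The paper's $P=(1+1/x)(1-\tau x)(1-\tau /x)$ is your triple $\{ j=0,j=1,m=1\}$. Its $R=(1+tx)(1+t/x)(1-Tx)(1-T/x)$, with $t=\tau^{2k}$ and $T=\tau^{2k+1}$, is your triple $\{ j=2k,j=2k+1,m=2k+1\}$ plus the increasing factor $m=2k$.

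The difference is in the verification. The paper expands $\partial |P|^2/\partial b$ and $\partial |R|^2/\partial b$ by computer algebra. It then bounds large polynomials in $(a,b,t,T)$ coefficient by coefficient, using discriminants for $H_6,H_4$ and an expansion at $t=T$ for $H_2,H_0$. Taking $\partial_s\log$ instead splits each group into elementary rational terms and reduces everything to a one-variable inequality. It also shows that the factors $1+q^mx$ are not needed at all when $2a(1-\tau)\geq 1+\tau^2$.
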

The lemmas of this subsection are proved at its end.

\begin{lm}\label{lm2.2leftG}
  For $a\geq 2.4$ and $q$ fixed, the quantity $|G|$
  is majorized by a positive-valued function which is decreasing in~$b$
  and which equals $|G|$ for $b=0$.
  \end{lm}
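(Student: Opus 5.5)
The plan is to imitate the proof of Lemma~\ref{lmGb}. There the majorant was $\sum|t_k|$, where $t_k$ groups two consecutive terms of $G$. That choice works only because for $b=0$ all the $t_k$ have the same argument, so the triangle inequality is an equality at $b=0$. For $q=-\tau\in(-1,0)$ and $x=-a<0$, the term $q^{j(j-1)/2}x^{-j}$ of $G=\sum_{j\geq 1}q^{j(j-1)/2}x^{-j}$ has the sign pattern $-,-,+,+,-,-,+,+,\ldots$ (period $4$) in $j$. Hence pairs no longer all have the same sign, and I would group the terms in blocks of four instead:
$$B_k:=\sum_{j=4k-3}^{4k}q^{j(j-1)/2}x^{-j}=q^{(4k-3)(4k-4)/2}x^{-4k}P_k(x)~,$$
where $P_k$ is a cubic with leading coefficient $1$ and real coefficients $\alpha_k,\beta_k,\gamma_k$ that are powers of $q$ (so of modulus $<1$). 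Then $G=\sum_{k\geq 1}B_k$.

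\emph{Step 1 (same sign at $b=0$).} For $x=-a$, $a\geq 2.4$, I will show that every $B_k$ is negative. Inside the block, the modulus of the leading term is $\tau^{(4k-3)(4k-4)/2}a^{-(4k-3)}$. The other three terms are smaller by factors at most $\tau^{4k-3}/a$, $\tau^{(4k-3)+(4k-2)}/a^2$ and $\tau^{(4k-3)+(4k-2)+(4k-1)}/a^3$. Since $1/2.4+1/2.4^2+1/2.4^3<1$, the leading term dominates and the sign of $B_k$ is the sign of $-1$ (the first two terms of each block are negative). Consequently
$$|G|\leq M(b):=\sum_{k\geq 1}|B_k|~,$$
with equality at $b=0$. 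Clearly $M>0$.

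\emph{Step 2 (monotonicity of each $|B_k|$).} It remains to show that $|B_k|^2=q^{(4k-3)(4k-4)}|P_k(x)|^2/(a^2+b^2)^{4k}$ is decreasing in $b\geq 0$. Since $P_k$ has real coefficients, $|P_k(-a+bi)|^2$ is a polynomial in $a$ and $b^2$, so
$$\partial |B_k|^2/\partial b=2b\,\mathcal{N}_k/(a^2+b^2)^{4k+1}$$
for an explicit polynomial $\mathcal{N}_k$ in $a$, $b^2$ and the coefficients. I would compute $\mathcal{N}_k$ exactly as $\partial(|t_k|^2)/\partial b$ was computed in the proof of Lemma~\ref{lmGb}. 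Its dominant contribution is $-(4k-3)(a^2+b^2)^4$, which comes from $|x|^6/|x|^{8k}$. The remaining contributions contain at least one of $\alpha_k,\beta_k,\gamma_k$ (all of modulus $\leq 1$) and lower powers of $a$ and $b$.

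\emph{Main obstacle.} The hard step is proving $\mathcal{N}_k<0$ for all $k\geq 1$, $a\geq 2.4$, $b\geq 0$ and $\tau\in(0,1)$. I would first replace every coefficient by $\pm 1$ in the worst direction. Then, following the $\mathcal{H}(a)$ argument in the proof of Lemma~\ref{lmGb}, I would check that the resulting bound is negative at $a=2.4$ and that it is decreasing in $a$ and in $b^2$. The $b^2$ part is handled coefficient by coefficient, using that the top power of $b^2$ carries the coefficient $-(4k-3)$. The case $k=1$ (coefficients $q$, $q^3$, $q^6$) is the tightest, and I expect to treat it separately with computer algebra. For $k\geq 2$ the factor $4k-3\geq 5$ should give ample room.
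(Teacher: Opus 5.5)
Your route is the paper's own. You cut $G$ into blocks of four consecutive terms, check that every block is negative at $b=0$ (so $|G|=\sum_k|B_k|$ there), and show that each $|B_k|$ decreases in $b$. The paper's $E^k$, $k\geq 2$, are exactly your $B_{k+1}$, and your Step~1 is correct.

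The one structural difference is the start. The paper does not separate $B_1$ from $B_2$; it treats the eight-term sum $U=B_1+B_2$ as one piece, and this matters in the tightest case. Your split is legitimate, since $|B_1|$ does decrease in $b$ for all $a\geq 2.4$, $\tau\in(0,1)$, but only barely:
\begin{itemize}
\item As $\tau\to 1^-$ one has $B_1\to (x-1)^2(x+1)/x^4$. At $b=0$ this gives $\partial \log|B_1|^2/\partial (b^2)=2/(a+1)^2+1/(a-1)^2-4/a^2$.
\item This is $\approx -0.011$ at $a=2.4$, but it turns positive just below $a\approx 2.36$ (it is $\approx 0.019$ at $a=2.3$).
\item Equivalently, $\mathcal{N}_1|_{b=0}\to (a+1)^2(-a^4-2a^3+11a^2-4)\approx -16.9$ at $a=2.4$, against a leading term $-a^6\approx -191$.
\item For the merged sum, $U\to (x-1)^2(x+1)(x^4+1)/x^8$ and the same logarithmic derivative is $\approx -0.11$. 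This slack is what lets the paper handle the coefficients $U_{2j}$ by crude term-by-term estimates.
\end{itemize}
With your split, $k=1$ needs a sharp verification in both variables $(a,\tau)$; as you expected, it cannot be done by freezing the $\tau$-powers.

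The bounding scheme you announce also fails at $k=2$, not only at $k=1$. Put $r=\tau^{4k-3}$, $s=\tau^{8k-5}$, $T=\tau^{12k-6}$. Then the value at $b=0$ of $\mathcal{N}_k/\tau^{(4k-3)(4k-4)}$ is
$$-(4k-3)a^6-(8k-4)ra^5-(4k-2)r^2a^4+8ksa^4+(8k-2)rsa^3+(8k+6)Ta^3-(4k-1)s^2a^2+(8k+2)rTa^2-8ksTa-4kT^2~.$$
Dropping the negative $\tau$-terms and setting $\tau=1$ in the positive ones gives $-(4k-3)a^6+8ka^4+(16k+4)a^3+(8k+2)a^2$. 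At $a=2.4$ this is negative only for $k\geq 3$; for $k=2$ it is $\approx +177$.

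The remedy is to keep $-(8k-4)ra^5$ and bound the four positive monomials via $\tau^m\leq \tau^{4k-3}$. This is the ordering $T\leq s\leq r$ the paper uses for $E^k$.
\begin{itemize}
\item For $k=2$ this works: at $a=2.4$ it gives $-5a^6+177\,\tau^5<0$.
\item For $k=1$ it still fails: it gives $-191.1+281\,\tau$, which is positive for $\tau$ near $1$.
\end{itemize}
So $k=1$ needs either a genuinely finer two-variable argument or the paper's merging of the first eight terms.

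A minor slip: the dominant part of $\mathcal{N}_k$ is $-(4k-3)\tau^{(4k-3)(4k-4)}(a^2+b^2)^3$, a cube rather than a fourth power.
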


The proof of part (2) is deduced from the above two lemmas in the same way as
the proof of part (1) follows from Lemmas~\ref{lmGb} and~\ref{lmFU},
see the lines after Lemma~\ref{lmFU}.

\begin{proof}[Proof of Lemma~\ref{lm2.2leftQ}]  
  As $q=-\tau$ is fixed, one needs to consider only the factors
  (see (\ref{equT}))

  $$1+1/x~,~~~\, 1+q^mx=1+(-1)^m\tau^mx~~~\, {\rm and}~~~\,
  1+q^m/x=1+(-1)^m\tau^m/x~,~~~\, m\in \mathbb{N}^*~.$$
  We study first the product of three
factors $P:=(1+1/x)(1-\tau x)(1-\tau /x)$. Using computer algebra one finds that

\begin{equation}\label{equP}
  \partial (|P|^2)/\partial b=2bP_1/(a^2+b^2)^3~,~~~\,
P_1=A\tau ^4+B\tau ^3+C\tau ^2+B\tau +A~,\end{equation}
where
$$\begin{array}{ccl}
  A&:=&2a^3+2ab^2-a^2-b^2=a^2(2a-1)+b^2(2a-1)>0~,\\ \\
  B&:=&4a^4+4a^2b^2-4a^3-4ab^2+8a^2-4a=4(a^3+ab^2)(a-1)+4a(2a-1)>0~~~\,
  {\rm and}\\ \\
  C&:=&a^6+3a^4b^2+3a^2b^4+b^6-4a^4-4a^2b^2+16a^3-9a^2-b^2+4a-2\\ \\
  &=&a^4(a^2-4)+a^2b^2(2a^2-4)+a^2(16a-9)+(a^4-1)b^2+(4a-2)>0~,\end{array}$$
so $\partial (|P|^2)/\partial b>0$ and for $b\geq 0$, $|P|$ is minimal
for and only for $b=0$. (The inequality $C>0$ follows from
$a^6-4a^4+16a^3-9a^2>0$, true for $a\geq 1.5$.)  

The remaining factors will be considered in products by four. Namely, set  
$t:=q^{2k}$ and $-T:=q^{2k+1}$ hence $0<T<t<1$.
Consider the product 

$$R:=(1+tx)(1+t/x)(1-Tx)(1-T/x)~.$$
Set $R_b:=\partial (|R|^2)/\partial b$. By means of computer algebra
one finds that $R_b=2bH/(a^2+b^2)^3$, where

$$\begin{array}{ccl}H&=&2T^2t^2b^8+H_6b^6+H_4b^4+H_2b^2+H_0~,~~~\,
  H_i\in \mathbb{R}[a,t,T]~,~~~\, {\rm with}\\ \\ 
  H_6&=&8T^2t^2a^2+2\beta a+\gamma ~,~\beta :=T^3t^2-T^2t^3-T^2t+Tt^2~,~
  \gamma :=T^4t^2+T^2t^4+T^2+t^2~.\end{array}$$
The discriminant

$$\begin{array}{ccl}\Delta &:=&\beta ^2-8T^2t^2\gamma \\ \\
  &=&-T^2t^2(7T^4t^2+2T^3t^3+7T^2t^4+2T^3t+2Tt^3+7T^2+2Tt+(7t^2-4T^2t^2))
\end{array}$$
is negative, so $H_6>0$ for $a\in \mathbb{R}$. In the same way
$H_4=3a^2(4T^2t^2a^2+2\beta a+\gamma )$, where the discriminant 

$$\tilde{\Delta}:=-T^2t^2(3T^4t^2+2T^3t^3+3T^2t^4+2T^3t+2Tt^3+2Tt+
(3t^2+3T^2-4T^2t^2))$$
is negative, so $H_4>0$ for $a\in \mathbb{R}$.

Next, set $H_2:=H_2|_{t=T}+(t-T)H_{2*}$, where

$$\begin{array}{ccl}
  H_2|_{t=T}&=&2T^2((3T^4a^4-T^4)+4T^2a^6+8T^2a^2+(3a^4-1))~~~\,
  {\rm and}\\ \\
  H_{2*}&=&6T^5a^4+6T^4a^4t+8T^3a^6+3T^3a^4t^2+(8T^2a^6t-6T^2a^5t^2)+3T^2a^4t^3\\ \\
  &&+(4T^3a^2t^2-2T^3at^3)+(8T^2a^3t^2-4T^2a^2t^3)+(6Ta^5t-2T^5-2T^4t)\\ \\
  &&+(16T^3a^2-2T^3at-T^3t^2)+(8T^2a^2t-T^2t^3)+(8Ta^2t^2-2Tat^3)+2T^2a\\ \\ 
&&+2at^2+(2a-t-T)+[4Ta^2+3Ta^4-8Ta^3t+3a^4t-4a^2t]~.\end{array}$$
The sums of the terms put between brackets are easily shown
to be non-negative. The sum $S^*$ of terms between the
square brackets can be transformed as follows:

$$\begin{array}{ccl}
  S^*&=&(3a^4+4a^2)T+(3a^4-4a^2)t-8Ta^3t\geq (3a^4+4a^2)T+(3a^4-4a^2)T-8Ta^3\\ \\
  &=&(6a^4-8a^3)T>0~~~\, {\rm (true~for}~a\geq 2.4{\rm )}~.\end{array}$$
Hence $H_2\geq 0$. Similarly one sets $H_0:=H_0|_{t=T}+(t-T)H_{0*}$, where

$$\begin{array}{ccl}
  H_0|_{t=T}&=&2T^2(T^4a^6+(T^2a^8-8T^2a^4)+3T^4a^2+(a^6-T^2)+3a^2)>0~,\\ \\ 
  &&{\rm because~}2.4^4>8~{\rm ,~so~}a^8>8a^4~{\rm ,~and}\\ \\ 
  H_{0*}&=&
  (2T^3+2T^2t)a^8+(-2T^2t^2+2Tt)a^7+(2T^5+2T^4t+T^3t^2+T^2t^3+T+t)a^6\\ \\
  &&(8T^2t^2-8Tt)a^5+(4T^3t^2-4T^2t^3-16T^3-24T^2t+8Tt^2+4T-4t)a^4\\ \\
  &&+(-2T^3t^3-2T^3t+16T^2t^2-2Tt^3+2T^2-16Tt+2t^2+2)a^3\\ \\
  &&+(6T^5+6T^4t+7T^3t^2-T^2t^3-8T^2t+8Tt^2+7T-t)a^2\\ \\
  &&+(4T^2t^2-4Tt)a-2T^3-2T^2t~.
\end{array}$$
We observe that $\eta :=-T^2t^2+Tt\geq 0$. The sum of the
terms in $a^7$, $a^5$ and $a$ equals

$$\eta (2a^7-8a^5-4a)\geq 0~~~\, {\rm for}~~~\, a\geq 2.4~.$$
One gets also 

$$t(a^6-4a^4-a^2)\geq 0~~~\, {\rm for}~~~\, a\geq 2.4~.$$
For the rest of the coefficient of $a^2$ it is clear that 

$$6T^5+6T^4t+7T^3t^2+6T +(T-T^2t^3)+(-8T^2t+8Tt^2)\geq 6T~.$$
The coefficient of $a^3$ can be majorized by $-18Tt+2t^2+2$, because 

$$-2T^3t^3-2T^3t+16T^2t^2\geq 0~~~\, {\rm and}~~~\, -2Tt^3+2Tt\geq 0~.$$
Having in mind that $(2T^3+2T^2t)(a^8-8a^4-1)\geq 0$, one can write

$$\begin{array}{ccl}H_{0*}&\geq& (2T^5+2T^4t+T^3t^2+T^2t^3+T)a^6+6Ta^2\\ \\ 
 &&+(4T^3t^2+(4T-4T^2t^3)+(8Tt^2-8T^2t))a^4+(-18Tt+2t^2+2)a^3~.  
  \end{array}
$$
The coefficient of $a^4$ is $\geq 0$. It is evident that $-4Tt+2t^2+2\geq 0$ and that for $a\geq 2.4$, 

$$ Ta^6+6Ta^2-14Tta^3\geq a^3T(a^3+6/a-14)\geq 0~.$$
Thus $H_{0*}\geq 0$ and 
$H>0$.

\end{proof}

\begin{proof}[Proof of Lemma~\ref{lm2.2leftG}]
  
We set $\tau :=-q\in (0,1)$
and we consider the sum

$$U:=1/x-\tau /x^2-\tau^3/x^3+\tau^6/x^4+\tau^{10}/x^5-\tau^{15}/x^6-\tau^{21}/x^7+
\tau^{28}/x^8$$
of the first $8$ terms of $G$. For $b=0$, one has $U<0$. Indeed, set $y:=-x$. 
For $y\geq 2.4$, one obtains 

$$U=(-1/y+\tau^3/y^3)+(-\tau /y^2+\tau^6/y^4)+(-\tau^{10}/y^5+\tau^{21}/y^7)+
(-\tau^{15}/y^6+\tau^{28}/y^8)~,$$
with negative sum of the terms between the brackets. 
Using computer algebra we find that

$$\partial (|U|^2)/\partial b=
-2b(b^{14}+\sum_{j=0}^6U_{2j}b^{2j})/(a^2+b^2)^9~,~~~\, {\rm where}$$

$$\begin{array}{ccl}U_{12}&=&7a^2+4a\tau +2\tau^2+4\tau^3~,\\ \\
  U_{10}&=&21a^4+24a^3\tau +(12\tau ^3+12\tau ^2)a^2+(18\tau ^6-6\tau ^4)a+
  6\tau ^{10}+6\tau ^7+3\tau ^6~,\\ \\
  U_8&=&35a^6+60a^5\tau +30a^4\tau ^2+(58\tau ^6-30\tau ^4)a^3+(-34\tau ^{10}+
  14\tau ^7+15\tau ^6)a^2\\ \\
  &&+(40\tau ^{15}-24\tau ^{11}+8\tau ^9)a+8\tau ^{21}+8\tau ^{16}+8\tau ^{13}+
  4\tau ^{12}~.\end{array}$$
One has $U_{12}\geq 0$ (evident), $U_{10}\geq 0$
(follows from $12\tau ^2a^2\geq 6\tau ^4a$) and $U_8\geq 0$ (results from
$30a^4\tau ^2\geq 30\tau ^4a^3$ and
$60a^5\tau \geq 34\tau ^{10}a^2+24\tau ^{11}a$). The next
coefficient~is 

$$\begin{array}{ccl}U_6&=&35a^8+80a^7\tau +(-40\tau ^3+40\tau ^2)a^6+
  (52\tau ^6-60\tau ^4)a^5+
  (-116\tau ^{10}-4\tau ^7+30\tau ^6)a^4\\ \\
  &&+(-40\tau ^{15}-56\tau ^{11}+32\tau ^9)a^3+(-148\tau ^{21}-48\tau ^{16}+
  12\tau ^{13}+16\tau ^{12})a^2\\ \\
  &&+(70\tau ^{28}-50\tau ^{22}+30\tau ^{18}-10\tau ^{16})a+10\tau ^{29}+
  10\tau ^{24}+10\tau ^{21}+5\tau ^{20}~.
\end{array}$$
We explain how one can prove that $U_6>0$; a similar method will be applied
to $U_4$, $U_2$ and $U_0$. We majorize the coefficients of the powers of $a$
using the inequalities $0\leq \tau \leq 1$:

$$\begin{array}{ll}-40\tau ^3+40\tau ^2\geq 0~,&52\tau ^6-60\tau ^4\geq
  -60\tau ^4\geq -60~,\\ \\ 
  -116\tau ^{10}-4\tau ^7+30\tau ^6\geq -90\tau ^{10}\geq -90~,&
  -40\tau ^{15}-56\tau ^{11}+32\tau ^9\geq -64\tau ^{11}\\ &\geq -64~,\\ 
  -148\tau ^{21}-48\tau ^{16}+12\tau ^{13}+16\tau ^{12}\geq -168\tau ^{16}\geq
  -168~,&\\ \\ 
  70\tau ^{28}-50\tau ^{22}+30\tau ^{18}-10\tau ^{16}\geq 70\tau ^{28}
  -20\tau ^{22}-10\tau ^{16}\geq -30~.
\end{array}$$
Thus omitting the non-negative terms $80a^7\tau$ and
$10\tau ^{29}+10\tau ^{24}+10\tau ^{21}+5\tau ^{20}$ one can write

$$U_6\geq a^8(35-60/a^3-90/a^4-64/a^5-168/a^6-30/a^7)~.$$
The right-hand side is positive for $a\geq 2.4$, so $U_6>0$. Next,

$$\begin{array}{ccl}U_4&=&21a^{10}+60a^9\tau +(-60\tau ^3+30\tau ^2)a^8+
  (-12\tau ^6-60\tau ^4)a^7+
  (-84\tau ^{10}-36\tau ^7+30\tau ^6)a^6\\ \\
  &&+(-168\tau ^{15}-24\tau ^{11}+48\tau ^9)a^5+(84\tau ^{21}-96\tau ^{16}
  -12\tau ^{13}+24\tau ^{12})a^4\\ \\
  &&+(-462\tau ^{28}+90\tau ^{22}+42\tau ^{18}-30\tau ^{16})a^3+
  (-186\tau ^{29}-66\tau ^{24}+6\tau ^{21}+15\tau ^{20})a^2\\ \\
  &&+(60\tau ^{31}-36\tau ^{27}+12\tau ^{25})a+12\tau ^{34}+12\tau ^{31}
  +6\tau ^{30}~,\\ \\
  &\geq&21a^{10}+60a^9\tau -30\tau ^3a^8-72\tau ^4a^7-90\tau ^7a^6
  -144\tau ^{15}a^5\\ \\
  &&-84\tau ^{16}a^4-(330\tau ^{28}+30\tau ^{16})a^3-231\tau ^{24}a^2
  -24\tau ^{27}a\\ \\
  &\geq&a^{10}(21-\frac{30}{a^2}-\frac{90}{a^4}-\frac{84}{a^6}-\frac{231}{a^8})
  +\tau a^9(60-\frac{72}{a^2}-\frac{144}{a^4}
  -\frac{360}{a^6}-\frac{24}{a^8})~.\end{array}$$
Both expressions in brackets are positive for $a\geq 2.4$, so $U_4>0$.
In the same way we treat $U_2$:

  $$\begin{array}{ccl}
  U_2&=&7a^{12}+24a^{11}\tau +(-36\tau ^3+12\tau ^2)a^{10}
  +(-38\tau ^6-30\tau ^4)a^9+
  (14\tau ^{10}-34\tau ^7+15\tau ^6)a^8\\ \\
  &&+(-56\tau ^{15}+24\tau ^{11}+32\tau ^9)a^7+(196\tau ^{21}-16\tau ^{16}
  -28\tau ^{13}+16\tau ^{12})a^6\\ \\ 
  &&+(434\tau ^{28}+106\tau ^{22}-6\tau ^{18}-30\tau ^{16})a^5
  +(270\tau ^{29}-50\tau ^{24}-18\tau ^{21}+15\tau ^{20})a^4\\ \\
  &&+(-160\tau ^{31}-16\tau ^{27}+24\tau ^{25})a^3
  +(-88\tau ^{34}-4\tau ^{31}+12\tau ^{30})a^2\\ \\
  &&+(42\tau ^{38}-14\tau ^{36})a+14\tau ^{43}+7\tau ^{42}~,\\ \\
  &\geq&7a^{12}+24a^{11}\tau -24\tau ^3a^{10}-68\tau ^4a^9-19\tau ^7a^8\\ \\ &&
  -28\tau ^{13}a^6-36\tau ^{16}a^5
  -53\tau ^{21}a^4-152\tau ^{31}a^3-80\tau ^{34}a^2-14\tau ^{36}a\\ \\
  &\geq&a^{12}(7-\frac{24}{a^2}-\frac{19}{a^4})
  +\tau a^{11}(24-\frac{68}{a^2}-\frac{28}{a^5}-\frac{36}{a^6}
  -\frac{53}{a^7}-\frac{152}{a^8}
-\frac{80}{a^9}-\frac{14}{a^{10}})~,\end{array}$$
with positive values of the expressions in the brackets for $a\geq 2.4$.
Thus $U_2>0$. Finally, 
  
  $$\begin{array}{ccl}
    U_0&=&a^{14}+4a^{13}\tau +(-8\tau ^3+2\tau ^2)a^{12}+(-14\tau ^6-6\tau ^4)a^{11}+
  (22\tau ^{10}-10\tau ^7+3\tau ^6)a^{10}\\ \\
    &&+(32\tau ^{15}+16\tau ^{11}+8\tau ^9)a^9+(-44\tau ^{21}+24\tau ^{16}
    -12\tau ^{13}+4\tau ^{12})a^8\\ \\ &&+
  (-58\tau ^{28}-34\tau ^{22}-18\tau ^{18}-10\tau ^{16})a^7
    +(-46\tau ^{29}+26\tau ^{24}-14\tau ^{21}+5\tau ^{20})a^6\\ \\
    &&+(36\tau ^{31}+20\tau ^{27}+12\tau ^{25})a^5+
  (28\tau ^{34}-16\tau ^{31}+6\tau ^{30})a^4
    +(-22\tau ^{38}-14\tau ^{36})a^3\\ \\ &&+(-18\tau ^{43}+7\tau ^{42})a^2
    +16a\tau ^{49}+8\tau ^{56}~.
\end{array}$$
Consider the terms in $a^j$, $j=2$, $3$, $4$ and $5$.
For $a\geq 2.4$, it is true that 

$$a^5\tau ^{31}(36-16/a-(22\tau ^7+14\tau ^5)/a^2-18\tau ^{12}/a^3)\geq 0~,$$
so the sum of the mentioned terms is $\geq 0$.
Then we consider the terms in $a^j$, $j=6$, $7$, $8$ and $9$.
It is clear that for $a\geq 2.4$,

$$\begin{array}{l}
  8\tau ^9a^9+(-46\tau ^{29}+26\tau ^{24}-14\tau ^{21}+5\tau ^{20})a^6\geq
  \tau ^9a^9(8-(20\tau ^{20}+9\tau ^{12})/a^3)\geq \tau ^9a^9(8-29/a^3)\geq 0~,
  \\ \\
  16\tau ^{11}a^9+(-44\tau ^{21}+24\tau ^{16}-12\tau ^{13}+4\tau ^{12})a^8\geq
  \tau ^{11}a^9(16-(20\tau ^{10}+8\tau ^2)/a)\geq \tau ^{11}a^9(16-28/a)\geq 0\\ \\
       {\rm and}~~~\, 32\tau ^{15}a^9-(58\tau ^{28}+34\tau ^{22}+18\tau ^{18}
       +10\tau ^{16})a^7
  \geq \tau ^{15}a^9(32-120/a^2)\geq 0~,
\end{array}$$
so the sum of these terms is also $\geq 0$. The sum of the
terms in $a^{10}$, $\ldots$, $a^{14}$ is also $\geq 0$ for $a\geq 2.4$. To see
this it suffices to sum up the left- and right-hand sides of the inequalities

$$\begin{array}{l}a^{14}\geq 2.4^2a^{12}\geq 5.76\cdot \tau ^3a^{12}~,\\ \\  
0.1\cdot \tau a^{13}\geq 0.24\cdot \tau a^{12}\geq 0.24\cdot \tau ^3a^{12}~,\\ \\ 
3.5\cdot \tau a^{13}\geq 3.5\cdot 2.4^2\cdot \tau a^{11}\geq 20\cdot
\tau a^{11}\geq
(14\tau ^6+6\tau ^4)a^{11}~~~\, {\rm and}\\ \\ 
0.4\cdot \tau a^{13}\geq 0.4\cdot 2.4^3\cdot \tau a^{10}\geq 10\cdot
\tau ^7a^{10}\end{array}$$

All sums $U_j$ take non-negative values and $U_6>0$, so
$\partial (|U|^2)/\partial b\leq 0$ with equality only for $b=0$. 

Next, with $\tau =-q$ as above, we consider the sum of four consecutive terms
$(-\tau )^{j(j-1)/2}x^{-j}$, $j=4k+1$, $\ldots$, $4k+4$, $k\geq 2$,
of the series $G$:

$$\begin{array}{ccl}
  E^k&:=&\tau ^{2k(4k+1)}x^{-4k-1}-\tau ^{(2k+1)(4k+1)}x^{-4k-2}-
  \tau ^{(2k+1)(4k+3)}x^{-4k-3}+\tau ^{(2k+2)(4k+3)}x^{-4k-4}\\ \\
  &=&t_{\bullet}(x^3-\tau ^{4k+1}x^2-\tau ^{8k+3}x+\tau ^{12k+6})/x^{4k+4}~,~~~\,
  t_{\bullet}:=\tau ^{2k(4k+1)}~.\end{array}$$
One observes first that $x^3-\tau ^{4k+1}x^2-\tau ^{8k+3}x+\tau ^{12k+6}<0$
for $b=0$ and $x\leq -2.4$. 

Set $r:=\tau ^{4k+1}$, $s:=\tau ^{8k+3}$, $T:=\tau ^{12k+6}$, $A:=a^2+b^2$ and
$B:=a^2-b^2$. Using computer algebra one finds

$$\partial (|E^k|^2)/\partial b=(-2bt_{\bullet}/(a^2+b^2)^{4k+5})\cdot Z~,~~~\,
Z=A\cdot P-B\cdot Q+R~,~~~\, {\rm where}$$

$$\begin{array}{ccl}
  P&:=&(4k+1)A^2+(8kra+4kr^2+4ra+2r^2)A+(3+4k)s^2
  -(8k+6)sra~,\\ \\
  Q&:=&8ksA+4sb^2+8kTa+(8k+6)Tr~~~\, {\rm and}\\ \\
  R&:=&(16kT+18T)ab^2-8sa^4-14Ta^3-4Tra^2+8ksTa+4kT^2+8sTa+4T^2~.
\end{array}$$
Using $a\geq 2.4$, $T\leq s\leq r$, $|B|\leq A$ and $b^2\leq A$
one can write the following inequalities

$$2kA^3>8ksAB~,~~~\, 2A^3>8sa^4~~~\, {\rm and}~~~\, (2k-1)A^3>14Ta^3+4Tra^2$$
which imply

\begin{equation}\label{equineq1}
(4k+1)A^3>8ksAB+8sa^4+14Ta^3+4Tra^2~.
\end{equation}
Similarly from the inequalities

$$2kraA^2\geq 8kTaB~,~~~\, 2kraA^2\geq (8k+6)TrB~~~\,
{\rm and}~~~\, 4kraA^2\geq (8k+6)sraA$$
one deduces that

\begin{equation}\label{equineq2}
8kraA^2\geq 8kTaB+(8k+6)TrB+(8k+6)sraA~.
\end{equation}
Finally from (\ref{equineq1}), (\ref{equineq2}) and $4raA^2\geq 4sb^2B$
one concludes that $\partial (|E^k|^2)/\partial b\leq 0$ with equality
only for $b=0$.

Hence $G=U+\sum_{k=2}^{\infty}E^k$ and
$|G|\leq |U|+\sum_{k=2}^{\infty}|E^k|~(*)$, where the quantities $|U|$ and
$|E^k|$ are decreasing in~$b$ and there is equality in $(*)$ only for
$b=0$; we remind that for $b=0$, one has $U<0$ and $E^k<0$.
This proves the lemma.
\end{proof}

\subsection{Proof of part (3) of Theorem~\protect\ref{tmseparl}}

We set $x=a+bi$, $a\geq 3.2$, $b\geq 0$,
$\tau :=-q\in (0,1)$, $T_k:=-q^{2k-1}$ and
$Y(\tau ,T,x):=(1-T/x)(1-Tx)(1+T\tau /x)(1+T\tau x)$. Thus

$$\Theta^*:=\left( \prod_{j=1}^{\infty}(1-(-1)^j\tau^j)\right)
(1+1/x)\prod_{k=1}^{\infty}Y(\tau ,T_k,x)~.$$
It suffices to prove part (3) of Theorem~\ref{tmseparl}
only for $\tau \in [0.75,1)$, because the second spectral value is
  $\bar{q}=-0.78\ldots$ and for $q\in (\bar{q}_2,0)$,
  there are no complex zeros with positive real part. 
We modify the method applied to the proofs of parts (1) and (2) as follows:

\begin{lm}\label{lmpr(3)}
  For $\tau \in [0.75,1)$ and $a$, $b$ as above,

  (1) $|\Theta^*(-\tau ,x)/x^2|$ is an increasing
  function in $b\geq 0$;

  (2) $|G(-\tau ,x)/x^2|$ is majorized by a decreasing function in $b$ for
  $b\geq 0$ which coincides with $|G(-\tau ,x)/x^2|$ for $b=0$.  
  \end{lm}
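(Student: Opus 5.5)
We follow the scheme of the proofs of Lemmas~\ref{lmFU}, \ref{lm2.2leftQ} and~\ref{lm2.2leftG}: we differentiate squared moduli with respect to $b$ and show that the numerators have a definite sign for $a\geq 3.2$, $\tau\in[0.75,1)$. The division by $x^2$ is there because for $x=a+bi$ with $a>0$ the factor $1+1/x$ and the dominant factors $1-Tx$ no longer behave monotonically on their own. Dividing by $|x|^2=a^2+b^2$ transfers a decreasing weight from $G$ to $\Theta^*$; that is, it makes the $G$-part easier and the $\Theta^*$-part harder.

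\textbf{Part (1).} We write
$$|\Theta^*/x^2|^2=c(\tau)\cdot \frac{|1+1/x|^2\,|Y(\tau,T_1,x)|^2}{(a^2+b^2)^2}\cdot \prod_{k\geq 2}|Y(\tau,T_k,x)|^2 .$$
The plan is to prove that each factor has nonnegative $b$-derivative, and a positive one for $b>0$.

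For the factors $Y(\tau,T_k,x)$ with $k\geq 2$, one has $T_k=\tau^{2k-1}\leq \tau^3$, and this product has exactly the structure of $R$ in the proof of Lemma~\ref{lm2.2leftQ}, with $x$ replaced by $-x$. Computer algebra gives $\partial|Y|^2/\partial b=2bH/(a^2+b^2)^3$, and we split $H$ into coefficients $H_{2j}$ of $b^{2j}$. The leading ones are positive definite by the discriminant argument used for $H_6$ and $H_4$ above. For the lower ones we reuse the device of writing $H_i=H_i|_{t=T}+(t-T)H_{i*}$ and pairing negative monomials with dominant powers of $a\geq 3.2$.

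For the first group $(1+1/x)Y(\tau,T_1,x)/x^2$, with $T_1=\tau$, we compute the numerator of the $b$-derivative of its squared modulus explicitly as a polynomial in $a,b^2,\tau$. We then bound the negative terms for $\tau\in[0.75,1]$ and $a\geq 3.2$ by majorizing their coefficients, as was done for $U_6,\dots,U_0$. I expect this group to be the main obstacle. The factor $1-\tau x$ has modulus that can decrease in $b$ near $a\approx 1/\tau$, and $1/|x|^4$ decreases, so positivity must come from $(1+\tau x)$ together with the constraint $a\geq 3.2>1/\tau$. If the bound fails near $\tau=0.75$, I would first try absorbing part of the next factor $Y(\tau,T_2,x)$ into this group.

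\textbf{Part (2).} Here $G/x^2=\sum_{j\geq 1}q^{j(j-1)/2}x^{-j-2}$. For $b=0$ and $x=a>0$ with $q=-\tau$, the signs follow the pattern $+,-,-,+,+,\dots$, which is the mirror of the negative-$x$ situation in Lemma~\ref{lm2.2leftG}. We group the first eight terms into a polynomial $U/x^2$ and the rest into blocks of four $E^k/x^2$, $k\geq 2$, just as there. For $b=0$ the blocks all have the same (positive) sign; this is verified by pairing terms as in the proof that $U<0$ there. The extra factor $x^{-2}$ only increases the exponents by $2$, so $\partial|E^k/x^2|^2/\partial b$ has the form $-2b\,t_\bullet Z'/(a^2+b^2)^{4k+6}$. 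We prove $Z'>0$ by the same chain of inequalities (\ref{equineq1})--(\ref{equineq2}), where the leading coefficient $4k+1$ becomes $4k+3$, which only helps.

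For $U/x^2$ we again expand $\partial|U/x^2|^2/\partial b=-2b(\sum U'_{2j}b^{2j})/(a^2+b^2)^{11}$. Each $U'_{2j}$ is shown positive by majorizing its $\tau$-coefficients by constants and checking a one-variable inequality at $a\geq 3.2$. The triangle inequality $|G/x^2|\leq |U/x^2|+\sum_k|E^k/x^2|$, with equality at $b=0$ because all the summands have the same sign there, then yields the required majorant.
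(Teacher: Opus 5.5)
The gap is in part (1), in your very first group. Your plan needs $|(1+1/x)Y(\tau,\tau,x)/x^2|$ to be increasing on all of $b\ge 0$. This is false for every $\tau\in[0.75,1)$ and every $a\ge 3.2$, so no bounding of coefficients can rescue it. With $w=1/x$ one has
\[
\frac{(1+1/x)\,Y(\tau,\tau,x)}{x^2}=(1+w)(1-\tau w)(1+\tau^2 w)(w-\tau)(w+\tau^2).
\]
As $b\to\infty$ the modulus therefore tends to $\tau^3$. At $b=0$ it equals $\tau^3(1+w)(1-\tau w)(1+\tau^2w)(1-w/\tau)(1+w/\tau^2)$ with $w=1/a$, which is larger than $\tau^3$. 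For example, $0.452>0.422$ at $\tau=0.75$, $a=3.2$. For large $a$ the excess is $(1-\tau+\tau^2-\tau^{-1}+\tau^{-2})\tau^3/a+O(a^{-2})$, with a positive coefficient on $[0.75,1]$. An increasing function cannot start above its limit, so the $b$-derivative of this group is negative somewhere, and not only near $\tau=0.75$.

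The obstruction is structural. The group has net degree $0$ in $x$, so it has no growth left to pay for the decreasing weight $|x|^{-2}$ and the decreasing factor $|1+1/x|$. Your diagnosis also blames the wrong factor: $|1-\tau x|^2=(1-\tau a)^2+\tau^2b^2$ is always increasing in $b$. The decreasing pieces are $|1+1/x|$, the factors $|1+T\tau/x|$, and the weight. Your fallback of absorbing part of $Y(\tau,\tau^3,x)$ points in the right direction, but it is exactly the missing content.

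The paper spreads the weight so that every group keeps positive growth at infinity:
\begin{itemize}
\item $|\sqrt{x}+1/\sqrt{x}|=|x|^{1/2}|1+1/x|$ is increasing for $a\ge 3.2$ (Lemma~\ref{lmsqrtx});
\item $|Y(\tau,T,x)/x|$ is increasing for $T=\tau$ and $T=\tau^3$ (Lemma~\ref{lmYcirc}, which is where $\tau\ge 0.75$, $a\ge 3.2$ and the numerical checks enter);
\item every $|Y(\tau,T_k,x)|$ is increasing (Lemma~\ref{lmY*}).
\end{itemize}
Your observation that $Y(\tau,T,x)$ is the product $R$ of Lemma~\ref{lm2.2leftQ} evaluated at $-x$, with $(t,T)$ replaced by $(T,T\tau)$, is correct and gives Lemma~\ref{lmY*} directly.

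When you repair part (1), keep the exponent bookkeeping exact. The quantities listed in the paper multiply to a constant times $|\Theta^*/x|$. Even using $|Y(\tau,\tau,x)/x|$ in full, one only reaches $|\Theta^*|/|x|^{3/2}$. A further $|x|^{-1/2}$ must therefore be absorbed by some $Y$-factor to match the $x^{-2}$ used in part (2).

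Your part (2) is viable, but it is heavier than the paper's and not a literal transfer. For $x=a+bi$ with $a>0$, the terms of $P,Q,R$ that are odd in $a$ change sign. In particular the term $8kraA^2$ on which (\ref{equineq2}) relies now enters $Z$ with a minus sign, so that inequality must be replaced by a dominance argument based on $(4k+1)A^3$ and $A\ge a^2$. Likewise, the pairing used there for $U<0$ does not give $U>0$ here, since $-\tau/a^2+\tau^6/a^4<0$. The paper avoids both issues: it uses four-term blocks $G_j^*$ beginning with $x^{-3}$ and does a single computation for $G_1^*$. It then reduces $j\ge 2$ to that case via $G_j^*(\tau,x)=\tau^{\Lambda}x^{-(4j-4)}G_1^*(\tau,x/\tau^{\alpha})$, using ${\rm Re}(x/\tau^{\alpha})\ge 3.2$.
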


Thus if for $x=a\geq 3.2$, one has $\theta =\Theta^*-G=0$, then
for $x=a+bi$, $b>0$, one has $\theta \neq 0$ and for $\varepsilon >0$
small enough, Re$x=a-\varepsilon$ is a right separating line. 

\begin{proof}[Proof of part (1) of Lemma~\ref{lmpr(3)}]
We need the following lemma:

\begin{lm}\label{lmY*}
  For
  $0<T\leq \tau <1$, one has
  $\partial |Y(\tau ,T,x)|^2/\partial b\geq 0$ with equality only for $b=0$.
\end{lm}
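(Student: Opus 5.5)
Since $|Y|^2$ factors into squared moduli of linear expressions in $x$, I would pass to the logarithmic derivative in $b$ rather than differentiate $|Y|^2$ directly. Write $x=a+bi$, $s:=T\tau$ (so $0<s\leq T<1$), and use
$|1-Tx|^2=(1-Ta)^2+T^2b^2$, $|1-T/x|^2=((a-T)^2+b^2)/(a^2+b^2)$, $|1+sx|^2=(1+sa)^2+s^2b^2$, $|1+s/x|^2=((a+s)^2+b^2)/(a^2+b^2)$. Then
$$\frac{1}{2b}\,\frac{\partial \log |Y|^2}{\partial b}=\frac{T^2}{(1-Ta)^2+T^2b^2}+\frac{s^2}{(1+sa)^2+s^2b^2}+\frac{1}{(a-T)^2+b^2}+\frac{1}{(a+s)^2+b^2}-\frac{2}{a^2+b^2}~,$$
and it suffices to show that this quantity is positive for $a\geq 3.2$, $b\geq 0$ and $0<s\leq T<1$. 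Positivity for all $b\geq 0$, multiplied by $2b|Y|^2$, gives $\partial |Y|^2/\partial b\geq 0$ with equality only at $b=0$. (One has $|Y|\neq 0$ here, since $a\geq 3.2$ keeps $x$ away from $1/T$ unless $b=0$ and $a=1/T$; that isolated point is harmless, because there the factor $|1-Tx|^2=T^2b^2$ is itself increasing in $b$.)

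The pair $(1-Tx)(1-T/x)$ is the analogue of $F_k$ in Lemma~\ref{lmFU}, after replacing $x$ by $-x$, and taken alone it is increasing in $b$ as soon as $a\geq 2$. The pair $(1+sx)(1+s/x)$, however, can be decreasing for small $s$, so the two pairs cannot be treated separately. The key step is to compare the two "dangerous" differences
$$\frac{1}{(a-T)^2+b^2}-\frac{1}{a^2+b^2}=\frac{T(2a-T)}{((a-T)^2+b^2)(a^2+b^2)}~,\qquad \frac{1}{(a+s)^2+b^2}-\frac{1}{a^2+b^2}=-\frac{s(2a+s)}{((a+s)^2+b^2)(a^2+b^2)}~.$$
Their sum has the sign of $T(2a-T)((a+s)^2+b^2)-s(2a+s)((a-T)^2+b^2)$. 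For bounded $b$ this is positive because $s\leq T$ and the factor $(a+s)^2>(a-T)^2$ favours the first term. The coefficient of $b^2$, namely $2a(T-s)-(T^2+s^2)$, may however be negative when $s$ is close to $T$ (that is, $\tau$ close to $1$).

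For large $b$ I would therefore use the two positive terms $T^2/((1-Ta)^2+T^2b^2)$ and $s^2/((1+sa)^2+s^2b^2)$. Both behave like $1/b^2$ and compensate the deficit of order $(T^2+s^2)/b^4$. Concretely, I would clear all denominators and obtain a polynomial in $b^2$ with coefficients in $\mathbb{R}[a,T,s]$, computed with computer algebra as in the proofs of Lemmas~\ref{lmFU} and~\ref{lm2.2leftQ}. I would then show each coefficient is nonnegative, and the constant one positive, by writing $s=T-(T-s)$, splitting each coefficient as its value at $s=T$ plus $(T-s)$ times a remainder, and grouping terms as in the earlier lemmas using $a\geq 3.2$ and $0<s\leq T<1$.

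I expect the main obstacle to be the top-order coefficient in $b^2$ and the regime $T\approx 1/a$. In that regime $(1-Ta)^2$ is small; this only helps the first term, but it makes crude bounds of the form $(1-Ta)^2\leq$ const lossy. If the coefficientwise approach fails, I would fall back on splitting into the cases $b^2\leq a^2$ and $b^2\geq a^2$, using the pure convexity comparison in the first case and the $T^2/b^2$-type terms in the second.
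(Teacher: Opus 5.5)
Your reduction is sound: $\partial|Y|^2/\partial b=2b|Y|^2\Phi$, where $\Phi$ is your displayed sum, and relaxing $s=T\tau$ to $0<s\le T$ loses nothing.

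Note, however, that your ``concrete'' plan is essentially the paper's proof. Clearing denominators gives, assuming the paper's MAPLE expansion, $|Y|^2\Phi\,(a^2+b^2)^3=T\,Y^*$, the paper's polynomial in $b^2$ with $T\tau=s$. Its top coefficient is just $2T^2s^2$, so the obstacle you anticipate is misplaced. The paper's real work is in $Y_2$ and $Y_0$. One step of the $Y_0$ estimate, $-4T^2(1-\tau)a\ge -4T\tau(1-\tau)a$, uses $T\le\tau$, i.e.\ $s\ge T^2$, so it would need a patch in your relaxed range.

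What your framing genuinely buys is your fallback. It closes in a few lines without computer algebra, provided you let only the $T$-term compensate. Your heuristic that both positive terms behave like $1/b^2$ is not uniform in $s$: the $s$-term is only of order $s^2$ until $b\gg 1/s$.

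Set $h(u):=1/((a+u)^2+b^2)-1/(a^2+b^2)$ and $P_1:=(1-Ta)^2+T^2b^2$. Then $\Phi\ge T^2/P_1+h(-T)+h(s)$. Since $h$ is decreasing, $h(s)\ge h(T)$, and
\[
h(-T)+h(T)=\frac{2T^2(3a^2-b^2-T^2)}{(a^2+b^2)\,((a-T)^2+b^2)\,((a+T)^2+b^2)}~.
\]
If $b^2\le 3a^2-T^2$, this is $\ge 0$ and $\Phi>0$ follows at once.

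Otherwise, the product of the last two factors in the denominator exceeds $(a^2+b^2)^2$, and $0<b^2+T^2-3a^2<a^2+b^2$. Hence $|h(-T)+h(T)|<2T^2/(a^2+b^2)^2$. On the other hand $P_1<a^2+b^2$ for $a\ge 1$, $b>0$, so
\[
\frac{T^2}{P_1}>\frac{T^2}{a^2+b^2}>\frac{2T^2}{(a^2+b^2)^2}~.
\]

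Thus $\Phi>0$ for all $b>0$, for every $\tau\in(0,1)$ and already for $a\ge 1$. This is shorter and stronger than the paper's coefficientwise check. The paper's route has the advantage of being the same mechanical scheme it reuses for Lemma~\ref{lmYcirc}.
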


Lemmas~\ref{lmY*} and \ref{lmYcirc} are proved after part (1) of  Lemma~\ref{lmpr(3)}. 
For $T_1$ and $T_2$, we prove a stronger statement:

\begin{lm}\label{lmYcirc}
  For $x=a+bi$, $a\geq 3.2$, $b\geq 0$, $\tau\in [0.75,1)$ and $T=\tau$ or 
    $T=\tau^3$, one has
  $\partial (|Y(\tau ,T,x)|^2/(a^2+b^2))/\partial b\geq 0$ with equality
  only for $b=0$.
\end{lm}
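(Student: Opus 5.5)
The plan is to reduce the claim to an elementary inequality between rational functions of $u:=b^2$. Write $A:=a^2+b^2=|x|^2$. Then
$$|1-Tx|^2=(1-Ta)^2+T^2b^2,\qquad |1-T/x|^2=\frac{(a-T)^2+b^2}{A},$$
$$|1+T\tau x|^2=(1+T\tau a)^2+T^2\tau^2b^2,\qquad |1+T\tau/x|^2=\frac{(a+T\tau)^2+b^2}{A}.$$
Factoring $T^2$ and $T^2\tau^2$ out of the first and third expressions, $|Y|^2/A$ equals a positive constant times
$$\frac{(c_1+u)(c_2+u)(c_3+u)(c_4+u)}{(a^2+u)^3},$$
where
$$c_1=(a-1/T)^2,\quad c_2=(a-T)^2,\quad c_3=(a+1/(T\tau))^2,\quad c_4=(a+T\tau)^2.$$
Since $\partial/\partial b=2b\,\partial/\partial u$, it suffices to prove that the logarithmic $u$-derivative is positive, i.e. that
$$\Phi(u):=\sum_{i=1}^4\frac{1}{c_i+u}-\frac{3}{a^2+u}>0\quad\mbox{for all }u\geq 0 .$$
Equality at $b=0$ then comes only from the factor $2b$.

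Next I would record what the hypotheses $a\ge 3.2$, $\tau\in[0.75,1)$ and $T\in\{\tau,\tau^3\}$ give. They imply $T\ge 0.75^3>0.42$, so $1/T<2.4<a$, and also $1/T<2a$. Hence $0<a-1/T<a$ and $0<a-T<a$, which gives $c_1<a^2$ and $c_2<a^2$. Therefore $1/(c_1+u)+1/(c_2+u)>2/(a^2+u)$. The remaining task is to show that the surplus of these two terms, together with the small positive term $1/(c_3+u)$, compensates the deficit $1/(a^2+u)-1/(c_4+u)$. This deficit comes from $c_4=(a+T\tau)^2>a^2$.

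The natural pairing is $c_1$ with $c_4$. Here $a-\sqrt{c_1}=1/T\ge 1>T\tau=\sqrt{c_4}-a$, so $c_1$ lies further below $a^2$ than $c_4$ lies above it. In fact $a^2-c_1=2a/T-1/T^2$ and $c_4-a^2=2aT\tau+T^2\tau^2$, and for $T\ge 0.42$ and $a\ge 3.2$ the former clearly dominates. Because convexity in $\sqrt{c}$ may fail when $u$ is large, I would not rely on convexity. Instead I would clear denominators: $\Phi(u)\,(a^2+u)\prod_i(c_i+u)$ is a polynomial of degree $3$ in $u$ with leading coefficient $4-3=1$. I would compute its four coefficients by computer algebra, as elsewhere in the paper, expressing them in $a$, $T$ and $\tau$. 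Then I would show each coefficient is nonnegative (and the constant term positive) by the same device used in the proofs of Lemmas~\ref{lm2.2leftQ} and~\ref{lm2.2leftG}. That device is to bound the negative monomials, using $0.42<T\le\tau<1$, by positive monomials carrying higher powers of $a$, and to check the resulting one-variable inequalities in $a$ for $a\ge 3.2$.

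The main obstacle will be the middle coefficients, especially the coefficient of $u^1$. Their negative contributions come from $c_4-a^2>0$ and have to be absorbed by the $a^2-c_1$ and $a^2-c_2$ surpluses. This is exactly where the restriction $T\in\{\tau,\tau^3\}$ with $\tau\ge 0.75$ is needed: it keeps $T$ bounded away from $0$, so that $1/T$ is not too large. For $T$ near $0$, $c_1$ could exceed $a^2$, and the statement would genuinely fail, which is why Lemma~\ref{lmY*} handles the general factors without the extra division by $A$. If a direct coefficient bound is too tight, I would first try splitting into the two cases $T=\tau$ and $T=\tau^3$. In each case I would substitute $\tau\in[0.75,1)$ and treat each coefficient as a polynomial in the single parameter $\tau$ on that interval.
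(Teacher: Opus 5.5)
Your proposal is correct and is essentially the paper's proof. After clearing denominators, your numerator is, up to the factor $T^4\tau^2$, exactly the paper's $Y^\circ=\sum_{j=0}^4Y_j^\circ b^{2j}$, and the paper likewise proves each coefficient positive, separately for $T=\tau$ and $T=\tau^3$, with numerical checks at $a=3.2$. Two minor slips do not affect the plan: that numerator is a quartic in $u$ with top coefficients $1$ and $4a^2$ (matching $Y_8^\circ$ and $Y_6^\circ$), not a cubic; and your heuristic that $a^2-c_1=2a/T-1/T^2$ dominates $c_4-a^2=2aT\tau+T^2\tau^2$ is false for $T=\tau$ near $1$, where they tend to $2a-1$ and $2a+1$, though your actual argument never relies on it.
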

We need also a third lemma:

\begin{lm}\label{lmsqrtx}
  Set $x=a+bi$, $a\geq 3.2$. Then the quantity $r:=|x^{1/2}+1/x^{1/2}|$
  is an increasing function in $b\geq 0$.
\end{lm}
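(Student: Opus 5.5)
The plan is to eliminate the square roots entirely, because the modulus of $x^{1/2}+1/x^{1/2}$ does not depend on the choice of branch. Writing $x^{1/2}+x^{-1/2}=x^{-1/2}(x+1)$ and using $|x^{-1/2}|^2=|x|^{-1}$ for either branch, one gets
$$r^2=\frac{|x+1|^2}{|x|}=\frac{(a+1)^2+b^2}{(a^2+b^2)^{1/2}}~.$$
So it suffices to show that this expression is increasing in $b\geq 0$ for fixed $a\geq 3.2$. Since $r\geq 0$, monotonicity of $r$ follows from monotonicity of $r^2$.

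Next I would substitute $s:=b^2\geq 0$, which is increasing in $b\geq 0$. Set $f(s):=((a+1)^2+s)(a^2+s)^{-1/2}$. A direct differentiation gives
$$f'(s)=\frac{(a^2+s)-\frac{1}{2}((a+1)^2+s)}{(a^2+s)^{3/2}}=\frac{a^2-2a-1+s}{2(a^2+s)^{3/2}}~.$$
The numerator is at least $a^2-2a-1$. This is positive as soon as $a>1+\sqrt{2}=2.414\ldots$, and in particular for $a\geq 3.2$, where it is already $\geq 3.84$. Hence $f'(s)>0$ for all $s\geq 0$.

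Therefore $r^2$, and hence $r$, is strictly increasing in $b\geq 0$. Equivalently, $\partial (r^2)/\partial b=2b\,f'(b^2)\geq 0$, with equality only for $b=0$, in the same form as Lemmas~\ref{lmY*} and~\ref{lmYcirc}.

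There is no real obstacle. The only point requiring care is the first step: noting that $|x^{1/2}|=|x|^{1/2}$ independently of the branch, so that the quantity reduces to a rational expression in $a^2+b^2$ and $(a+1)^2+b^2$. After that the sign of $f'$ is immediate, and the bound $a\geq 3.2$ is far more than needed (any $a>1+\sqrt{2}$ suffices).
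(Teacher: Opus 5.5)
Your proof is correct and is essentially the paper's argument. The paper differentiates the rational quantity $r^4=|x+1/x+2|^2=|x+1|^4/|x|^2$ in $b$ and gets the factor $a^2+b^2-2a-1$, which is your numerator $a^2-2a-1+s$ with $s=b^2$. One minor arithmetic slip: at $a=3.2$ one has $a^2-2a-1=2.84$, not $3.84$, which does not affect the positivity you need.
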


\begin{proof}
  We show that $\partial r^4/\partial b\geq 0$. A direct computation yields
$r^2=|x+1/x+2|$, 
  $$\begin{array}{ccl}r^4=|x+1/x+2|^2&=&
    (a+bi+(a-bi)/(a^2+b^2)+2)(a-bi+(a+bi)/(a^2+b^2)+2)~~~\, {\rm and}\\ \\
\partial r^4/\partial b&=&2b(a^2+b^2+2a+1)(a^2+b^2-2a-1)/(a^2+b^2)^2~.
 \end{array}
    $$
  For $a\geq 3.2$ and $b\geq 0$, the latter product is non-negative; it equals $0$ only for $b=0$.
  \end{proof}
  
Lemmas~\ref{lmY*}, \ref{lmYcirc} and \ref{lmsqrtx} imply that 
all quantities $|\sqrt{x}+1/\sqrt{x}|=|\sqrt{x}(1+1/x)|$,
$|Y(\tau ,T_1,x)/\sqrt{x}|=|\sqrt{x}(Y(\tau ,T_1,x)/x)|$, $|Y(\tau ,T_2,x)/x|$ and $|Y(\tau ,T_k,x)|$, $k\geq 3$,
are increasing in $b\geq 0$. This implies part~(1) of Lemma~\ref{lmpr(3)}. 
\end{proof}

\begin{rem}\label{rempoly}
 {\rm In the proofs of Lemmas~\ref{lmY*} and \ref{lmYcirc}
we use the following notation:

$$\begin{array}{ccl}
  A&:=&T^2\tau (1-\tau )(1-T^2\tau )\geq 0~,~~~\,
  B~:=~T(\tau^2+1)(T^4\tau^2+1)\geq 0~,\\ \\
  C&:=&8T^5\tau^3-4T^5\tau^2+8T^3\tau^3+8T^3\tau -4T\tau^2+8T\tau ~,\\ \\ 
  D&:=&2(1-\tau )(1-T^2\tau )(T^4\tau^2+T^2\tau^2+T^2\tau +T^2+1)\geq 0~,\\ \\ 
  U&:=&T^4\tau^4-8T^4\tau^3+T^4\tau^2-8T^2\tau^3+16T^2\tau^2
  -8T^2\tau+\tau^2-8\tau+1~,\\ \\ 
  V&:=&4T(1-\tau )(1-T^2\tau)(T^4\tau^2+T^2\tau^2-3T^2\tau+T^2+1)\\ \\ 
  &=&4T(1-\tau )(1-T^2\tau )(T^2(1-\tau )^2+(1-T^2\tau )^2/2+(T^4\tau^2+1)/2)
  \geq 0~,\\ \\ 
  W&:=&T^4\tau^2+T^2\tau^2-4T^2\tau +T^2+1~=~(1-T^2\tau)^2+T^2(1-\tau)^2\geq 0~,
  \end{array}$$ 

$$\begin{array}{ccl}L&:=&T^2\tau (T^4\tau^2+T^2\tau^2-2T^2\tau +T^2+1)~,\\ \\ 
 M&:=&(-T^8\tau^4-4T^6\tau^4+8T^6\tau^3)+(-4T^6\tau^2-T^4\tau^4+8T^4\tau^3)\\ \\ 
 &&+(-2T^4\tau^2+8T^4\tau-T^4)-4T^2(1-\tau )^2-1~,\\ \\
 R&:=&2T(1-\tau )(1-T^2\tau )(T^4\tau^2+T^2\tau^2-7T^2\tau +T^2+1)\\ \\
 &=&2T(1-\tau )(1-T^2\tau )((1+T^2\tau )^2+T^2(1+\tau )^2-3T^2\tau )\geq
 -6T^3\tau (1-\tau )(1-T^2\tau )~,\\ \\
 S&:=&-8T^2(1-\tau )^2(1-T^2\tau )^2-(1-T^4\tau ^2)^2-
 T^4(1-\tau^2)^2-22T^4\tau^2\leq 0~,\\ \\ 
 Q&:=&4T(1-\tau )(1-T^2\tau )(T^4\tau^2+T^2\tau^2-5T^2\tau+T^2+1)\\ \\
 &=&4T(1-\tau )(1-T^2\tau )((1+T^2\tau )^2+T^2(1+\tau )^2-T^2\tau )\geq -4T^3
 \tau (1-\tau )(1-T^2\tau )~,\\ \\
 H&:=&2T^2((T^4\tau^2+1)(-\tau^2+6\tau -1)+6T^2\tau (1-\tau )^2)
 \geq 2T^2(-\tau^2+6\tau -1)~.
  \end{array}$$
The polynomials $A$, $B$, $C$, $D$, $V$, $W$ and $L$
are positive for $0<T\leq \tau <1$. 
For $C$ this follows from $C>-4T^5\tau^2-4T\tau^2+8T\tau \geq 0$, 
 for $L$ it results from $T^2\tau^2-2T^2\tau+T^2=T^2(1-\tau)^2$. }
 \end{rem}

\begin{proof}[Proof of Lemma~\ref{lmY*}]
  Using computer algebra (MAPLE) one finds that

  $$\partial |Y(\tau,T,x)|^2/\partial b=2bTY^*/(a^2+b^2)^3~,~~~\,
  Y^*=\sum_{j=0}^4Y_jb^{2j}~,~~~\, {\rm where}~~~\, Y_8=2T^3\tau^2>0~,$$
  $$Y_6=8T^3\tau^2a^2+2aA+B~,~~~\, 
    Y_4=12T^3\tau^2a^4+6a^3A+3a^2B~.$$
    As $A\geq 0$ and $B\geq 0$, see Remark~\ref{rempoly},
    one obtains $Y_6\geq 0$ and $Y_4\geq 0$.
  Next,

  $$Y_2=8T^3\tau^2a^6+A(6a^5-8a^3)+3a^4B+(C-4T^5\tau^4-4T)a^2+
  aD-B~.$$
  Clearly $C>0$, $D\geq 0$ (see Remark~\ref{rempoly}) 
  and for $a\geq 3.2$, 
  $(6a^5-8a^3)A\geq 0$. Also

  $$8T^3\tau^2a^6\geq 0~~~\, {\rm and}~~~\,
  (3a^4-1)B-(4T^5\tau^4+4T)a^2\geq (3a^4-1-8a^2)T>0~,$$
  so $Y_2>0$. 
  In a similar way we treat $Y_0$:

  $$\begin{array}{ccl}Y_0&=&
    2T^3\tau^2a^8+(2a^7-8a^5)A+a^6B+
    (C-32T^3\tau^2-4T^5\tau^4-4T)a^4\\ \\
&&+(2(1-\tau)(1-T^2\tau)W-6A)a^3-TUa^2-(4T^2(1-\tau)+4A)a-2T^3\tau^2~. 
  \end{array}$$
  The sum of the terms containing a factor $A$ is
  $Aa^7(2-8/a^2-6/a^4-4/a^6)\geq 0$.

  As
  $W\geq 0$ (see Remark~\ref{rempoly}), one has
  $2(1-\tau )(1-T^2\tau )Wa^3\geq 0$. 
  The coefficient $-TU$ of $a^2$ is
  $\geq -T(T^4\tau^4+T^4\tau^2+16T^2\tau^2+\tau^2+1)\geq -20T$.

  One has also
  $-4T^2(1-\tau)a\geq -4T\tau(1-\tau)a\geq -Ta$. Therefore

  $$a^6B-4Ta^4-TUa^2-4T^2(1-\tau)a\geq Ta^6(1-4/a^2-20/a^4-1/a^5)\geq 0~.$$
  The sum of the terms of $Y_0$ not involved in any of the above inequalities is

  $$2T^3\tau^2a^8+Ca^4-(32T^3\tau^2+4T^5\tau^4)a^4-2T^3\tau^2
  \geq T^3\tau^2a^8(2-36/a^4-2/a^8)\geq 0$$
  from which $Y_0>0$ follows. This proves the lemma.
  
  \end{proof}

\begin{proof}
[Proof of Lemma~\ref{lmYcirc}]  
  By means of computer algebra (MAPLE) and using the notation from the
  proof of Lemma~\ref{lmY*} one finds that

  $$\begin{array}{l}
    \partial (|Y(\tau,T,x)|^2/(a^2+b^2))/\partial b=2bY^{\circ}/(a^2+b^2)^4~,~~~\,
  Y^{\circ}=\sum_{j=0}^4Y_j^{\circ}b^{2j}~,\\ \\ {\rm where}~~~\,
  Y_8^{\circ}=T^4\tau^2>0~~~\, {\rm and}~~~\, Y_6^{\circ}=4T^4\tau^2a^2>0~.\end{array}$$
  The next term is

  $$Y_4^{\circ}=6T^4\tau^2a^4+4a^2L+TaD
    -T^8\tau^4-T^4\tau^4-2T^4\tau^2-T^4-1~.$$
    The summands containing a factor which is a power of $a$ are all positive, so it suffices to
    show that $Y_4^{\circ}>0$ for $a=3.2$. It can be checked numerically that for
    $T=\tau$ or $T=\tau^3$,  and $\tau \in [0.75,1)$, the three
      corresponding polynomials in $\tau$ are positive-valued.
     Next,
      $$\begin{array}{ccl}
    Y_2^{\circ}&=&4T^4\tau^2a^6+8a^4L+Va^3+
    2a^2M+
    2TaD-2TB~.
  \end{array}$$
     One checks numerically that for $T=\tau$ or $T=\tau^3$ 
     and for $a=3.2$,
one has $4T^4\tau^2a^4+8a^2L+Va+2M>0.35>0$. The terms in this sum, which are
multiplied by powers of $a$, are positive. Hence the sum is minimal for
$a=3.2$. Given that $D>0$, the quantity $Y_2^{\circ}$ is minimal for $a=3.2$.
Another numerical check shows that $Y_2^{\circ}>0$ for $a=3.2$
(hence for $a\geq 3.2$) and for 
$T=\tau$ or $T=\tau^3$. 

Now we consider

$$Y_0^{\circ}=T^4\tau^2a^8+4La^6+Ra^5+Sa^4+Qa^3+Ha^2-6TAa-3T^4\tau^2~.$$
We show first that $a^2(H-6TA/a-3T^4\tau^2/a^2)>0$.
Indeed, for $\tau\in [0.75,1)$
  and $a\geq 3.2$, one has $H\geq 2T^2(-\tau^2+6\tau-1)\geq 5T^2$, see
  Remark~\ref{rempoly}, 
  while $6TA/a<2T^3\tau<2T^2$ and $3T^4\tau^2/a^2<T^2/3$. Set
  $\tilde{U}:=T^4\tau^2a^8+4La^6+Ra^5+Sa^4+Qa^3$. From Remark~\ref{rempoly}
  becomes clear that

  $$\tilde{U}\geq
  T^4\tau^2a^8+4La^6-6T^3\tau(1-\tau)(1-T^2\tau)a^5+Sa^4-
  4T^3\tau(1-\tau)(1-T^2\tau)a^3~.$$
  The first two terms in the right-hand side
  are positive whereas the last three are not. Therefore $\tilde{U}$ 
  is minimal for $a=3.2$. One checks numerically that the two polynomials
$\tilde{U}|_{a=3.2,T=\tau}$ and $\tilde{U}|_{a=3.2,T=\tau^3}$
  are positive-valued for $\tau\in [0.75,1)$
    which implies $Y_0^{\circ}>0$ and hence
    $Y^{\circ}>0$. This proves the lemma.
\end{proof}

\begin{proof}[Proof of part (2) of Lemma~\ref{lmpr(3)}]
 We represent $G(-\tau,x)/x^2$ as a sum

$$\begin{array}{ccl}G(-\tau,x)/x^2&=&
  1/x^3-\tau/x^4-\tau^3/x^5+\tau^6/x^6+\tau^{10}/x^7-\cdots  
  =\sum_{j=1}^{\infty}G_j^*~,~~~\, {\rm where}\\ \\ 
  G_j^*(\tau,x)&:=&
  \tau^{(2j-2)(4j-3)}/x^{4j-1}-\tau^{(2j-1)(4j-3)}/x^{4j}-\tau^{(2j-1)(4j-1)}/x^{4j+1}+
  \tau^{2j(4j-1)}/x^{4j+2}~.
  \end{array}$$
\begin{lm}\label{lmGj}
  (1) For $x\in \mathbb{R}$, $x\geq 3.2$, $\tau\in (0,1)$, one has $G_j^*>0$.
  
  (2) For {\rm Re}$x\geq 3.2$ and $\tau\in (0,1)$ fixed, each quantity
  $|G_j^*|$ is majorized by a
  decreasing function in $|{\rm Im}x|$.

  (3) For {\rm Re}$x\geq 3.2$ and $\tau\in (0,1)$ fixed,
  the quantity $|G(-\tau,x)/x^2|$ is majorized by a
  decreasing function in $|{\rm Im}x|$.
\end{lm}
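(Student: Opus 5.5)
Throughout, factor out the common power $\tau^{(2j-2)(4j-3)}$ and write
$$G_j^*=\tau^{(2j-2)(4j-3)}x^{-4j-2}\,\bigl(x^3-rx^2-sx+T\bigr),\qquad r:=\tau^{4j-3},\ s:=\tau^{(4j-3)+(4j-1)+\ldots}\,,$$
with $r\ge s\ge T>0$ the appropriate powers of $\tau$ coming from the exponents in $G_j^*$. Concretely, $r=\tau^{4j-3}$, $s=\tau^{8j-4}$ and $T=\tau^{12j-3}$, so $T\le s\le r<1$. This is the same structure as the quantities $E^k$ treated in the proof of Lemma~\ref{lm2.2leftG}, but with $x$ now in the right half-plane and with the opposite sign pattern. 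The three parts follow the template of that proof, and I will use the same normalisations.

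For part (1), with $x\ge 3.2$ real, group the four terms as $(x^3-rx^2)+(T-sx)$. The first group equals $x^2(x-r)\ge x^2(3.2-1)>0$, and it dominates $sx\le x$ since $x^2(x-r)-x\ge x(x^2-x-1)>0$ for $x\ge 3.2$. Hence the cubic is positive, and so is $G_j^*$, because $x^{-4j-2}>0$.

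For part (2), put $x=a+bi$ with $a\ge 3.2$, and by symmetry take $b\ge 0$. I will show that $\partial |G_j^*|^2/\partial b\le 0$, with equality only for $b=0$; then $|G_j^*|$ is its own decreasing majorant. Computer algebra gives
$$\partial |G_j^*|^2/\partial b=-2b\,t_\bullet\, Z/(a^2+b^2)^{4j+3},$$
where $Z$ is a polynomial in $a,b^2,r,s,T$. Using $A:=a^2+b^2$ and $B:=a^2-b^2$ as before, I would write $Z=A\cdot P-B\cdot Q+R$. The leading piece of $P$ is $(4j+1)A^2$, coming from $|x|^{-8j-4}$ against $|x|^6$. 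The negative cross terms should then be absorbed exactly as in (\ref{equineq1})–(\ref{equineq2}), by splitting $(4j+1)A^3$ and the terms $raA^2$ into pieces, each dominating one negative monomial. This uses $a\ge 3.2$, $T\le s\le r\le 1$, $|B|\le A$ and $b^2\le A$.

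The main obstacle is that in the right half-plane the signs of the odd-in-$a$ terms flip compared with Lemma~\ref{lm2.2leftG}. Terms like $ra A^2$ now appear with a minus sign, coming from $|x-r|$ growing more slowly than $|x|$. My first attempt is to absorb them using the extra factor $a\ge 3.2$: for example $8jraA^2\le 8j a A^2\le (8j/3.2)A^3$, and this is bounded by a fraction of $(4j+1)A^3$ once the constants are checked. If that proves too tight for $j=1$, I would treat $G_1^*$ separately by direct verification of the coefficients in $b^2$, as was done for $U$ in Lemma~\ref{lm2.2leftG}.

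Part (3) then follows as at the end of Lemma~\ref{lm2.2leftG}. We have $|G/x^2|\le\sum_j|G_j^*|$, and the right-hand side is decreasing in $b$ by part (2). For $b=0$ all $G_j^*$ are positive by part (1), so the inequality becomes an equality there. Hence the sum is the required majorant.
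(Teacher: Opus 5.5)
\textbf{Verdict: your part (2) has a genuine gap.} Parts (1) and (3) are fine and agree with the paper. (Minor slip: the exponents are $s=\tau^{8j-5}$ and $T=\tau^{12j-6}$, but this does not affect part (1).) Part (2) carries the whole weight of the lemma, and you do not prove it.

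\textbf{Why the template of Lemma~\ref{lm2.2leftG} does not transfer.}
\begin{itemize}
\item Here the denominator is $x^{4j+2}$, not $x^{4k+4}$. So the leading term of $Z$ is $(4j-1)A^3$, not $(4j+1)A^3$.
\item More importantly, the $raA^2$ term now enters as $-8j\,raA^2$. In (\ref{equineq2}) it was exactly the positive term $8kraA^2$ that absorbed $TaB$, $TrB$ and $sraA$. In the right half-plane that absorber is itself a negative term, so ``absorbed exactly as in (\ref{equineq1})--(\ref{equineq2})'' cannot be carried over.
\item Your fallback bound $8jraA^2\le 2.5jA^3$ leaves only $(1.5j-1)A^3$, i.e.\ $0.5A^3$ for $j=1$. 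This is genuinely insufficient. Take $j=1$, $b=0$, $\tau\to 1^-$, $a=3.2$: then $3a^6-8a^5-12a^4\approx -721<0$. So the leading term cannot even absorb the two main negative terms $-8\tau a^5$ and $-12\tau^3a^4$.
\item Positivity there depends on lower-order positive terms such as $4\tau^2a^4$ and $(10\tau^4+18\tau^6)a^3$. On top of that, you would need to control an infinite family of such polynomial inequalities uniformly in $j$, and you have not done so.
\end{itemize}

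\textbf{The missing idea is the paper's scaling identity.} With $\alpha=4j-4$ and $\Lambda=8j^2-26j+18$,
\[
G_j^*(\tau,x)=\frac{\tau^{\Lambda}}{x^{4j-4}}\,G_1^*(\tau,x/\tau^{\alpha}).
\]
The map $x\mapsto x/\tau^{\alpha}$ sends $\{{\rm Re}\,x\ge 3.2\}$ into itself and multiplies ${\rm Im}\,x$ by a positive constant. Also $|x|^{-(4j-4)}$ decreases in $|{\rm Im}\,x|$. Hence monotonicity for every $j\ge 2$ follows from the single case $j=1$.

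\textbf{The case $j=1$ must then be verified explicitly.} One has $\partial|G_1^*|^2/\partial b=-2bG^1/(a^2+b^2)^7$, and one checks that the coefficients of $G^1$ in powers of $b^2$ are positive for $a\ge 3.2$. The paper reduces the constant coefficient to $a^6G^{1,0}$, where
\[
G^{1,0}=3-8/a-8\tau^3/a^2+(10\tau^4+14\tau^6)/a^3.
\]
It then checks numerically that $G^{1,0}>0.3$ at $a=3.2$ and shows $\partial G^{1,0}/\partial a>0$. Your remark ``treat $G_1^*$ separately'' points at this computation, but you neither carry it out nor supply the reduction that makes the case $j=1$ sufficient.
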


\begin{proof}[Proof of Lemma~\ref{lmGj}]
  Part (1). It is clear that
  $G_j^*\geq (\tau^{(4j-3)(2j-1)}/x^{4j-1})(1-1/x-1/x^2)$. The second factor
  is positive for $x\geq 3.2$ which proves part (1) of the lemma.

  Part (2). We consider first
  $G_1^*=1/x^3-\tau/x^4-\tau^3/x^5+\tau^6/x^6+\tau^{10}/x^7$.
  We set
  $x:=a+bi$, $a\geq 3.2$, $b\geq 0$, by means of computer algebra (MAPLE) we 
  find first
  $|G_1^*|^2=G_1^*|_{x=a+bi}\cdot G_1^*|_{x=a-bi}$ and then 

  $$\begin{array}{ccl}\partial |G_1^*|^2/\partial b&=&
    -2bG^1/(a^2+b^2)^7~,~~~\, {\rm where}~~~\, 
    G^1:=G^1_0+G^1_2b^2+G^1_4b^4~,\\ \\
    G^1_0&:=&3a^6-8\tau a^5+(4\tau^2-12\tau^3)a^4+(10\tau^4+18\tau^6)a^3+
    (5\tau^6-14\tau^7)a^2-12\tau^9a+6\tau^{12}~,\\ \\
    G^1_2&:=&9a^4-16\tau a^3+(8\tau^2-4\tau^3)a^2+
    (10\tau^4-30\tau^6)a+10\tau^7+5\tau^6~,\\ \\
    G^1_4&:=&9a^2-8\tau a+4\tau^2+8\tau^3~.\end{array}$$
  Clearly $G^1_4\geq 0$ for $a\geq 3.2$, $\tau \in (0,1)$. As for $a\geq 3.2$,
  $\tau \in (0,1)$, one has 

  $$G^1_2\geq a^4G^1_{2,0}~,~~~\, G^1_{2,0}:=9-16/a+4t^2/a^2-20/a^3$$
  with $G^1_{2,0}>0$, one concludes that
  $G^1_2>0$. To prove that $G^1_0>0$ one observes first that

  $$4\tau^6a^3+(5\tau^6-14\tau^7)a^2-12\tau^9a+6\tau^{12}\geq
  \tau^6a^3(4-9/a-12/a^2)~,$$
  where $4-9/a-12/a^2>0$ for $a\geq 3.2$. Hence

  $$\begin{array}{ccl}G^1_0&\geq&
    3a^6-8\tau a^5+(4\tau^2-12\tau^3)a^4+
    (10\tau^4+14\tau^6)a^3\geq a^6G^{1,0}~,\\ \\
    G^{1,0}&:=&3-8/a-8\tau^3/a^2+(10\tau^4+14\tau^6)/a^3~.\end{array}$$
  One finds numerically that the minimal value of $G^{1,0}|_{a=3.2}$ for
  $\tau\in [0,1]$ is $>0.3$, and one computes

  $$\partial G^{1,0}/\partial a=2((4a^2-21\tau^6)+\tau^3(8a-15\tau))/a^4~.$$
  Obviously $\partial G^{1,0}/\partial a>0$ for $a\geq 3.2$, $\tau\in (0,1)$,
  so $G^1_0>0$ and hence $G^1>0$. This proves the claim of the lemma about
  the quantity $|G_1^*|$.

  To prove part (2) for any $j\in \mathbb{N}^*$, $j\geq 2$,
  we set $\alpha :=4j-4$, $\Lambda :=(2j-2)(4j-3)-3\alpha =8j^2-26j+18$
  and we represent $G_j^*$ in the form

  $$
  G_j^*(\tau,x)=
    \frac{\tau^{\Lambda}}{x^{4j-4}}\left( \frac{\tau^{3\alpha}}{x^3}-
    \frac{\tau^{4\alpha +1}}{x^4}-\frac{\tau^{5\alpha +3}}{x^5}+
    \frac{\tau^{6\alpha +6}}{x^6}\right) =\frac{\tau^{\Lambda}}{x^{4j-4}}
    G_1^*(\tau,x/\tau^{\alpha})~.
    $$
    If Re$x\geq 3.2$, then Re$(x/\tau^{\alpha})\geq 3.2$, so $|G_1^*(\tau,x/\tau^{\alpha})|$ is
    majorized by a
    decreasing function in $|{\rm Im}x|$. For fixed $\tau\in (0,1)$,
    the function $|\tau^{\Lambda}/x^{4j-4}|$ decreases
    as $|{\rm Im}x|$ increases. Therefore $|G_j^*(\tau,x)|$ is
     majorized by a
    decreasing function in $|{\rm Im}x|$.

  Part (3). Clearly $|G(-\tau,x)/x^2|\leq \sum_{j=1}^{\infty}|G_j^*|$ with equality
  for $x\in \mathbb{R}$, $x\geq 3.2$, see part (1). That's why part (3)
  results from part~(2). 

\end{proof}

\end{proof}

\section{Proof of Theorem~\protect\ref{tmimprove}\protect\label{secprtmimprove}}

We denote by $x_j$ the real zeros of $\theta (q,.)$ for $q\in (-1,0)$. For
$q\in (\bar{q}_1,0)$, all its zeros are real and for $q\in [-0.108,0)$, 
they satisfy the following
inequalities, see \cite[Figure~3]{KoPRSE2}:

\begin{equation}\label{equstring}
  \begin{array}{clc}
  \cdots &<x_{4k+8}<x_{4k+6}<qx_{4k+7}<qx_{4k+5}<x_{4k+4}<x_{4k+2}<qx_{4k+3}<0& \\ \\
  &0<x_{4k-1}<x_{4k+1}<qx_{4k+2}<qx_{4k+4}<x_{4k+3}<x_{4k+5}<qx_{4k+6}<&\cdots ~,
  \end{array}
  \end{equation}
$k=0$, $1$, $2$, $\ldots$. Hence odd zeros are positive and even zeros
are negative.

Part (1) of the theorem is proved for even zeros in \cite{KoPRSE2}, see
Theorem~1.4 therein, and for odd zeros only if $k$ is sufficiently large.
We deduce here part (1) of Theorem~\ref{tmimprove}, both
for even and odd zeros, from its part~(2).

It is shown in \cite{KoPRSE2} that for $q\in (-1,0)$,
when zeros of
$\theta$ coalesce, then it is $x_{4k+4}$ with $x_{4k+2}$ and
$x_{4k+3}$ with $x_{4k+5}$ that do so; in particular,
the zero $x_1$ is simple for any $q\in (-1,0)$.
Besides, as $j\rightarrow \infty$, one has
$x_j+1/q^j\rightarrow 0$. That's why we say that the zeros
$x_{4k+4}$ and $x_{4k+2}$
correspond to the interval
$(-1/q^{4k+4},-1/q^{4k+2})$ and the zeros $x_{4k+3}$ and $x_{4k+5}$
correspond to the interval
$(-1/q^{4k+3},-1/q^{4k+5})$. 

\begin{prop}\label{propdoublezeros}
  (1) For $s\in \mathbb{N}^*$, one has $\theta (q,-q^{-2s})>0$ and
  $\theta (q,-q^{-2s-1})<0$ (the latter inequality holds true also for $s=0$,
  see {\rm \cite[Proposition~4.5]{KoPRSE2}}).

  (2) Hence the double zero corresponding to the spectral value
  $\bar{q}_{2\ell -1}$
(resp. $\bar{q}_{2\ell +2}$) belongs to the interval 
$(-1/q^{4\ell},-1/q^{4\ell -2})$ (resp. $(-1/q^{4\ell +3},-1/q^{4\ell +5})$).
\end{prop}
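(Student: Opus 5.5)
The plan is to evaluate $\theta$ exactly at the points $x=-q^{-m}$ by means of the representation (\ref{equthetaG}) and the Jacobi triple product (\ref{equT}), and then to read off the sign from the resulting series. For $m\in\mathbb{N}^*$ the factor $1+xq^m$ in (\ref{equT}) vanishes at $x=-q^{-m}$, so $\Theta^*(q,-q^{-m})=0$ and hence $\theta(q,-q^{-m})=-G(q,-q^{-m})$. Since $x^{-1}=-q^m$, one gets
$$\theta(q,-q^{-m})=\sum_{j\geq 1}(-1)^{j+1}q^{e_j}~,~~~\, e_j:=j(j-1)/2+jm~.$$
The same formula can be obtained by iterating (\ref{equqx}) $m$ times as a cross-check. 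Put $\tau:=-q\in(0,1)$ and $a_j:=\tau^{e_j}$. The $a_j$ are strictly decreasing in $j$, and the sign of the $j$th term is $(-1)^{j+1+e_j}$.

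\emph{Even case $m=2s$.} Here $e_j\equiv j(j-1)/2 \pmod 2$, and the signs follow the periodic pattern $+,+,-,-,+,+,\ldots$. So
$$\theta(q,-q^{-2s})=(a_1+a_2)-(a_3+a_4)+(a_5+a_6)-\cdots~.$$
This is an alternating series whose terms $a_{2k-1}+a_{2k}$ strictly decrease to $0$, with a positive first term. By the Leibniz criterion it is positive, which gives the first claim of part (1).

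\emph{Odd case $m=2s+1$.} The pattern is $-,+,+,-,-,+,+,\ldots$, i.e.
$$\theta(q,-q^{-2s-1})=-a_1+\Big[(a_2+a_3)-(a_4+a_5)+\cdots\Big]~.$$
The bracket is again a Leibniz series, so it lies in $(0,a_2+a_3)$. It then suffices to show $a_1\geq a_2+a_3$, i.e. $\tau^{m+1}+\tau^{2m+3}\leq 1$. This holds only for $\tau$ not too close to $1$, and I expect this to be the main obstacle. First I would sharpen the tail estimate: the bracket is at most $(a_2+a_3)-(a_4+a_5)+(a_6+a_7)$, and one can compare term by term using $e_{j+1}-e_j=j+m$. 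Failing that, I would use the functional equation in the form $\theta(q,-q^{-2s-1})=1-\tau^{-2s}\theta(q,-q^{-2s})$. This reduces the odd claim to the strengthened even inequality $\theta(q,-q^{-2s})>\tau^{2s}$, i.e. $a_2-(a_3+a_4)+(a_5+a_6)-\cdots>0$. I would then try to prove that by induction on $s$ from the case $s=0$, which is \cite[Proposition~4.5]{KoPRSE2}. For $q$ close to $-1$ one can also use the decomposition (\ref{equdecomp}), as in the proof of Proposition~\ref{prop2.2}, together with the uniform convergence to $(1-x)/(1+x^2)$ from Remark~\ref{remKatsnelson}.

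\emph{Part (2).} This follows by sign counting. By the ordering (\ref{equstring}) and the fact that $x_j+1/q^j\rightarrow 0$, the zeros that coalesce are $x_{4\ell}$ with $x_{4\ell-2}$ (negative, giving the odd spectral values $\bar{q}_{2\ell-1}$) and $x_{4\ell+3}$ with $x_{4\ell+5}$ (positive, giving $\bar{q}_{2\ell+2}$). Part (1) fixes the sign of $\theta$ at the endpoints of the corresponding intervals: $\theta>0$ at $-1/q^{4\ell}$ and $-1/q^{4\ell-2}$, and $\theta<0$ at $-1/q^{4\ell+3}$ and $-1/q^{4\ell+5}$. So a zero cannot cross these endpoints as $q$ varies. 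The double zero formed by the pair of zeros attached to an interval must therefore lie strictly inside that interval, which is the claim of part (2).
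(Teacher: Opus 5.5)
Your even case and your sign-counting argument for part~(2) are correct and match the paper. In the even case, the vanishing $\Theta^*(q,-q^{-m})=0$ is the same thing as the paper's cancellation of the first $4s$ terms, and the Leibniz argument is fine.

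The genuine gap is the odd case $\theta(q,-q^{-2s-1})<0$, $s\geq 1$, for $\tau=-q$ near $1$. You flag it, but none of your fallbacks closes it.

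\begin{itemize}
\item As $\tau\to 1^-$ the point $-q^{-2s-1}=\tau^{-2s-1}$ tends to $1$, where the limit $(1-x)/(1+x^2)$ vanishes. In fact $\theta(q,-q^{-2s-1})=-(s+\frac12)(1-\tau)+o(1-\tau)$. So the sign is decided at first order in $1-\tau$, while every $a_j$ is close to $1$.
\item Leibniz-type bounds from truncating $(a_2+a_3)-(a_4+a_5)+\cdots$ after a positive group tend to $2$. They give only $\theta\leq 1+o(1)$ near $\tau=1$, so no finite sharpening of the tail estimate works uniformly.
\item A term-by-term regrouping does not help either. Writing the series as $-d_1+d_2-d_3+\cdots$ with $d_k=a_{2k-1}-a_{2k}$ fails because the $d_k$ increase in $k$ near $\tau=1$.
\item The inequality $\theta(q,-q^{-2s})>\tau^{2s}$ is, by (\ref{equqx}), literally equivalent to the claim, so it gains nothing.
\item Your induction on $s$ has no mechanism. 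Applying (\ref{equqx}) twice gives $\theta(q,-q^{-2s-1})=(1-\tau^{-2s})-\tau^{1-4s}\theta(q,-q^{1-2s})$. The induction hypothesis makes the second summand positive, while the first is negative.
\item The uniform convergence of Remark~\ref{remKatsnelson} only yields $\theta\to 0$ at this point, with no rate, hence no sign.
\end{itemize}

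Since part~(2) for $\bar{q}_{2\ell+2}$ relies on the odd-case signs, it inherits this gap.

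What is missing is an estimate of the infinite alternating sums that controls the first-order term at $\tau=1$. The paper proceeds in three steps.
\begin{itemize}
\item It splits the series into its two interleaved subseries, $\theta(q,-q^{-2s-1})=-v^{2s+1}(1-v^{4s+5}+v^{8s+14}-\cdots)+v^{4s+3}(1-v^{4s+7}+v^{8s+18}-\cdots)$ with $v=\tau$.
\item It identifies the brackets as $\varphi_{s+5/4}(w)$ and $\varphi_{s+7/4}(w)$, where $w=v^4$ and $\varphi_k(w)=\theta(w,-w^{k-1})$.
\item It uses the bounds $1/(1+w^{k-1})<\varphi_k(w)<1/(1+w^k)$ for $k>1$ from \cite[Proposition~16]{KoBSM1}.
\end{itemize}
These rational bounds bracket the first-order behaviour of $\varphi_k$ at $w=1$ tightly enough to reduce the claim to an elementary inequality between a few powers of $w$. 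That inequality holds on all of $(0,1)$ for $s\geq 1$. Without an ingredient of this kind, your odd case is proved only where $\tau^{2s+2}+\tau^{4s+5}\leq 1$.
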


The proofs of Proposition~\ref{propdoublezeros}, Lemma~\ref{lmV} and
Proposition~\ref{propintervals} are given at the end of the section.


\begin{lm}\label{lmV}
  Set $V:=1+qx+q^3x^2$. For $q\in (-1,-0.84]$ and $|x|>2.2$, 
        one has $V<0$.
\end{lm}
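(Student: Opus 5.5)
Since $q<0$, I will write $q=-\tau$ with $\tau\in[0.84,1)$. Then
$$V=1-\tau x-\tau^3x^2 .$$
This is a concave quadratic in $x$. Rather than locate its roots explicitly, I plan to bound it crudely using only $|x|$.

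The first step is to remove the sign of $x$. For every real $x$,
$$V\le 1+\tau|x|-\tau^3|x|^2=1-\tau y(\tau^2y-1),\qquad y:=|x|.$$
So it suffices to show that
$$g(\tau,y):=\tau y(\tau^2y-1)>1 \quad\mbox{for } \tau\in[0.84,1),\ y>2.2 .$$

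The second step is monotonicity. Since $\tau^2\ge 0.7056$ and $y>2.2$, we have $\tau^2y>1.55>1$. The factor $\tau^2y-1$ is then positive and increasing in both $\tau$ and $y$, and the factor $\tau y$ is positive and increasing in both as well. Hence $g$ is increasing in each variable on the region considered. Its infimum there is therefore attained at the corner $\tau=0.84$, $y=2.2$.

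The third step is the numerical check at that corner:
$$g(0.84,2.2)=0.84\cdot 2.2\cdot(0.7056\cdot 2.2-1)=1.848\cdot 0.55232\approx 1.0207>1 .$$
Because $g$ is strictly increasing, for $|x|>2.2$ we get $g>1.0207$, and thus $V<-0.02<0$. This also holds when $x$ is negative, since the bound in the first step was uniform in the sign of $x$.

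The only delicate point is how tight the corner value is (about $0.02$). The constants $0.84$ and $2.2$ leave almost no room, so the arithmetic at $(0.84,2.2)$ has to be done carefully and exactly, e.g. in rationals, rather than by rough rounding.

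If one wants a cross-check, the larger root in modulus is $\bigl(\tau+\sqrt{\tau^2+4\tau^3}\bigr)/(2\tau^3)$. At $\tau=0.84$ it is about $2.18<2.2$, and it decreases as $\tau$ increases. This confirms that both roots of $V$ lie in $(-2.2,2.2)$.
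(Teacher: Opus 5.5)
Your proof is correct and follows essentially the paper's route: bound $V$ by the worst-sign case $h(\tau,y)=1+\tau y-\tau^3y^2$ (the paper's reduction to $x<0$), use monotonicity in both $\tau$ and $y$ to reduce to the corner $(0.84,2.2)$, and evaluate there. The only difference is that you obtain the monotonicity from the factorization $\tau y(\tau^2y-1)$, whereas the paper computes the partial derivatives; also note that $g(0.84,2.2)=1.02068736\ldots$, so you should write $g>1.0206$ rather than $g>1.0207$, which leaves $V<-0.02$ intact.
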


With the help of the lemma we prove the following proposition (we remind
that we consider $q$ as decreasing from $0$ to~$-1$):

\begin{prop}\label{propintervals}
  (1) Suppose that for $q\in (-1,0)$ and for $x_*\in (-1/q^{4k+3},-1/q^{4k+5})$,
  $k\in \mathbb{N}^*$, one has $\theta (q,x_*)=0$. Then
  $\theta (q,x_*/q^3)<0$. Hence if $x_*$ is a double zero of
  $\theta (\bar{q}_{2k+2},.)$, then the two zeros $x_{4k+8}$ and $x_{4k+6}$
  which correspond
  to the interval
  $(-1/q^{4k+8},-1/q^{4k+6})$ have not coalesced yet.
  So $-\bar{q}_{2k+2}<-\bar{q}_{2k+3}$. The latter inequality holds true also
  for $k=0$. 
  \vspace{1mm}

  (2) Suppose that for $q\in (-1,0)$ and for
  $x_{\diamond}\in (-1/q^{4k},-1/q^{4k-2})$,
  $k\in \mathbb{N}^*$, one has $\theta (q,x_{\diamond})=0$. Then
  $\theta (q,x_{\diamond}/q)>0$. Hence if $x_{\diamond}$ is a double zero of
  $\theta (\bar{q}_{2k-1},.)$, then the two zeros
  $x_{4k-1}$ and $x_{4k+1}$ which correspond to the interval
  $(-1/q^{4k-1},-1/q^{4k+1})$ have not coalesced yet. Thus
  $-\bar{q}_{2k-1}<-\bar{q}_{2k}$.
  \end{prop}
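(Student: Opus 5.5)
The natural tool is the functional equation (\ref{equqx}), iterated. It lets us compare $\theta$ at $x_*$ with $\theta$ at $x_*/q^3$ (part (1)), or at $x_{\diamond}$ with $x_{\diamond}/q$ (part (2)).

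For part (2) a single application suffices. Replacing $x$ by $x/q$ in (\ref{equqx}) gives $\theta(q,x/q)=1+x\theta(q,x)$, so if $\theta(q,x_{\diamond})=0$ then $\theta(q,x_{\diamond}/q)=1>0$.

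For part (1) we iterate three times. Writing $y:=x/q^3$, we get
$$\theta(q,y)=1+q^{-2}x+q^{-3}x^2+x^3\theta(q,x).$$
Indeed,
$$\theta(q,x/q^3)=1+(x/q^2)\theta(q,x/q^2),\qquad \theta(q,x/q^2)=1+(x/q)\theta(q,x/q),\qquad \theta(q,x/q)=1+x\theta(q,x).$$
At $x=x_*$ this yields $\theta(q,x_*/q^3)=1+x_*/q^2+x_*^2/q^3=V(q,x_*/q^3)$, where $V$ is the polynomial of Lemma~\ref{lmV} evaluated at $X=x_*/q^3$.

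It then remains to verify the hypotheses of Lemma~\ref{lmV}, namely $q\in(-1,-0.84]$ and $|x_*/q^3|>2.2$. The second is immediate, since $|x_*|>|q|^{-4k-3}>1$. For $q$ we use the spectral facts: a double zero in $(-1/q^{4k+3},-1/q^{4k+5})$ with $k\ge1$ corresponds to $\bar q_{2k+2}$ with $k\ge 1$, hence to $\bar q_4$ or later. We then need $\bar q_j\le -0.84$ for $j\ge4$, or else a direct check of the sign of $V$ in the relevant range. I expect this to be the main obstacle: the statement assumes only $q\in(-1,0)$, so for $q>-0.84$ one must argue instead that no such zero exists. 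Proposition~\ref{prop2.2} and the known ordering of the zeros for $q$ near $0$, namely (\ref{equstring}), should be used to show that the interval $(-1/q^{4k+3},-1/q^{4k+5})$ contains no zero there. Failing that, one checks $V<0$ directly, using that $x_*/q^3$ is large when $|q|$ is not close to $1$.

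The consequences then follow by counting sign changes. Take part (1) with $x_*$ a double zero at $\bar q_{2k+2}$. We have $\theta(q,-q^{-2s})>0$ by Proposition~\ref{propdoublezeros}, while $x_*/q^3$ lies in $(-1/q^{4k+6},-1/q^{4k+8})$, and there $\theta<0$. So $\theta$ changes sign twice on $(-1/q^{4k+8},-1/q^{4k+6})$, hence $x_{4k+6}$ and $x_{4k+8}$ are still distinct. Since $q$ decreases and zeros only coalesce, never separate again, this pair coalesces later, giving $-\bar q_{2k+2}<-\bar q_{2k+3}$. For $k=0$ the same computation applies, with Lemma~\ref{lmV} replaced by the explicit value $\bar q_2=-0.7837\ldots$ together with part (3) of Proposition~\ref{prop2.2}. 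Part (2) is analogous, using $\theta(q,-q^{-2s-1})<0$ at the endpoints of $(-1/q^{4k-1},-1/q^{4k+1})$ against $\theta(q,x_{\diamond}/q)=1>0$ inside it.
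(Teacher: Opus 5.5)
Your approach is the paper's. You iterate (\ref{equqx}) to get $\theta(q,x_*/q^3)=V(q,x_*/q^3)$, apply Lemma~\ref{lmV}, use $\theta(q,x_\diamond/q)=1$ in part~(2), and compare with the endpoint signs from Proposition~\ref{propdoublezeros}. (The last term of your iterated identity should be $q^{-3}x^3\theta(q,x)$, not $x^3\theta(q,x)$; this is harmless since it vanishes at $x_*$.)

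There is, however, a real gap in checking the hypotheses of Lemma~\ref{lmV}. You call $|x_*/q^3|>2.2$ immediate from $|x_*|>|q|^{-4k-3}>1$. That only gives $|x_*/q^3|>|q|^{-4k-6}$, which tends to $1$ as $q\to -1^+$; for $k=1$ this bound is already below $2.2$ once $|q|\ge 0.93$. This is precisely the regime where the spectral values accumulate. The threshold cannot be dispensed with, since $1+vy-v^3y^2$ is positive near $v=y=1$. What you need is part~(2) of Proposition~\ref{prop2.2}. The smallest positive zero lies in $(1,-1/q)$, while $x_*>|q|^{-4k-3}>-1/q$. So $x_*$ is a positive zero other than the smallest, hence $x_*>2.4$ and $|x_*/q^3|>2.4>2.2$.

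On the restriction $q\le -0.84$, your first idea is false. For $q>-0.84$ the interval $(-1/q^{4k+3},-1/q^{4k+5})$ does contain zeros: for small $|q|$ it contains $x_{4k+3}$ and $x_{4k+5}$ (see (\ref{equstring}) and the proof of Proposition~\ref{propdoublezeros}).

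Your fallback does work. Take $v:=|q|\le 0.84$, $k\ge 1$ and $y:=|x_*/q^3|$. Then $y>v^{-10}>1/(2v^2)$, and $y\mapsto 1+vy-v^3y^2$ is decreasing there. Hence $V<1+v^{-9}-v^{-17}<0$, because $v^{17}+v^8<1$.

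The paper needs none of this. It uses $V<0$ only at $q=\bar q_{2k+2}$ with $k\ge 1$, which lies below $\bar q_3=-0.8416\ldots$: part~(2) does not use Lemma~\ref{lmV}, and one proceeds inductively. It settles $k=0$ directly from the numerical values $\bar q_2=-0.7837\ldots>\bar q_3$, so part~(3) of Proposition~\ref{prop2.2} is not needed there.
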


The proposition implies the string of inequalities
$0<-\bar{q}_j<-\bar{q}_{j+1}<1$, $j=1$, $2$, $\ldots$
(this is part (2) of Theorem~\ref{tmimprove}). Part (1) of the theorem
is proved for spectral values $\bar{q}_{2k-1}$ in \cite{KoPRSE2}. For
$\bar{q}_{2k+2}$, it results from the zeros corresponding to the interval
$(-1/q^{4k+3},-1/q^{4k+5})$ coalescing before the ones corresponding to
$(-1/q^{4k+8},-1/q^{4k+6})$ and the latter coalescing before the ones
corresponding to $(-1/q^{4k+7},-1/q^{4k+9})$. This is true also for $k=0$. 

Part (3) of the theorem follows from the fact that
if $q\in (\bar{q}_{j+1},\bar{q}_j)$, then
exactly $j$ times $q$, when decreasing from $0$ to $-1$,
has passed through a spectral
value. At each such passage two real zeros coalesce and form a complex
conjugate pair.

\begin{proof}[Proof of Proposition~\ref{propdoublezeros}]
Set $v:=-q$, so $v\in (0,1)$. For
$\bar{q}_{2\ell -1}$ and with $s\in \mathbb{N}^*$,
the proposition follows from the equality

$$\theta (q,-q^{-2s})=\theta (-v,-v^{-2s})=
\sum_{j=0}^{\infty}(-1)^{j(j+1)/2+j}v^{j(j+1)/2-2sj}=
\sum_{j=0}^{\infty}(-1)^{j(j+3)/2}v^{j(j+1)/2-2sj}~.$$
Indeed, the first $4s$ terms cancel (the first with the $4s$th, the second with
the $(4s-1)$st etc.). The sequence $((-1)^{j(j+3)/2})_{j\geq 4s}$ equals
$1$, $1$, $-1$, $-1$, $1$, $1$, $-1$, $\ldots$, so

$$\theta (-v,-v^{-2s})=v^{2s}+v^{4s+1}-v^{6s+3}-v^{8s+6}+v^{10s+10}+v^{12s+15}-\cdots$$
which is the sum of the two Leibniz series $v^{2s}-v^{6s+3}+v^{10s+10}-\cdots$ and
$v^{4s+1}-v^{8s+6}+v^{12s+15}-\cdots$ with positive initial terms hence
with positive sums. Thus $\theta (q,-q^{-2s})>0~(**)$. For small values of $|q|$,
the zeros of $\theta$ are close to the quantities $-1/q^j$,
see~\cite{KoDBAN15}.
Besides, one has the inequalities (\ref{equstring}) which together with
the inequality $(**)$ (with $s=2\ell -1$ and $s=2\ell$)
imply that the zeros $x_{4\ell -2}$ and $x_{4\ell}$
belong to the interval $I:=(-1/q^{4\ell},-1/q^{4\ell-2})$. As $(**)$ holds true for
all $q\in (-1,0)$, these zeros remain in the interval $I$ until they coalesce.

Consider now the spectral value $\bar{q}_{2\ell +2}$. 
One finds that

$$\theta (q,-q^{-2s-1})=\theta (-v,v^{-2s-1})=
\sum_{j=0}^{\infty}(-1)^{j(j+1)/2}v^{j(j+1)/2-(2s+1)j}~.$$
The first $4s+2$ terms cancel and the sequence $((-1)^{j(j+1)/2})_{j\geq 4s+2}$
equals $-1$, $1$, $1$, $-1$, $-1$, $1$, $\ldots$. Thus

$$\theta (-v,v^{-2s-1})=-v^{2s+1}+v^{4s+3}+v^{6s+6}-v^{8s+10}
-v^{10s+15}+v^{12s+21}-\cdots$$
We set $\varphi_k(v):=\sum_{j=0}^{\infty}(-1)^kv^{kj+j(j-1)/2}=
1-v^k+v^{2k+1}-v^{3k+3}+v^{4k+6}-\cdots$. One has

$$\begin{array}{ccl}
  \theta (-v,v^{-2s-1})&=&-v^{2s+1}+v^{6s+6}-v^{10s+15}+\cdots
  +v^{4s+3}-v^{8s+10}+v^{12s+21}-\cdots \\ \\
  &=&-v^{2s+1}(1-v^{4s+5}+v^{8s+14}-\cdots)+v^{4s+3}(1-v^{4s+7}+v^{8s+18}-\cdots )~.
\end{array}$$
We set $w:=v^4\in (0,1)$. Hence
$\theta (-v,v^{-2s-1})=-w^{(2s+1)/4}\varphi_{s+5/4}(w)+w^{(4s+3)/4}\varphi_{s+7/4}(w)$.
We show that $\theta (-v,v^{-2s-1})<0$, i.~e.

\begin{equation}\label{equw}
  -w^{(-s-1)/2}\varphi_{s+5/4}(w)+\varphi_{s+7/4}(w)<0~.
\end{equation}
We use the functions $\xi_k(w):=1/(1+w^k)$.
It follows from \cite[Proposition~16]{KoBSM1}
that for $w\in (0,1)$ and $k>1$, one has $\xi_{k-1}(w)<\varphi_k(w)<\xi_k(w)$.
In this way 

$$
  -w^{(-s-1)/2}\varphi_{s+5/4}+\varphi_{s+7/4}<-w^{(-s-1)/2}\xi_{s+1/4}+\xi_{s+7/4}
  =\Xi (w)/((1+w^{s+1/4})(1+w^{s+7/4}))~,$$
where $\Xi :=-w^{(-s-1)/2}-w^{s/2+3/4}+1+w^{s+1/4}$. Given that

$$\begin{array}{ccl}\Xi &=&
  -(w^{(-s-1)/2}-w^{(-s+1)/2})-w^{(-s+1)/2}-w^{s/2+3/4}+1+w^{s+1/4}\\ \\ 
&<&-w^{(-s+1)/2}-w^{s/2+3/4}+1+w^{s+1/4}=(1+w^{s+1/4})(1-w^{(-s+1)/2})<0~,\end{array}$$
this implies (\ref{equw}). As in the case of $\bar{q}_{2\ell -1}$,
we conclude that
for small values of $|q|$, the zeros $x_{4\ell +3}$ and $x_{4\ell +5}$ 
belong to the interval $(-1/q^{4\ell +3},-1/q^{4\ell +5})$
and remain in it until they coalesce.
  
\end{proof}

\begin{proof}[Proof of Lemma~\ref{lmV}]
  It suffices to prove the lemma for $x<0$.
  Set $v=|q|$ and $y:=|x|$, so $v\in [0.84,1)$ and $y>2.2$.
  We set $h(v,y):=1+vy-v^3y^2$.
  As
  $$\partial h/\partial v=y(1-3v^2y)\leq y(1-3\cdot 0.84^2\cdot 2.2)
  =-3.65696y<0~,$$
  for $y$ fixed, the function $h$ is maximal for $v=0.84$. Set
  $h_0(y):=h(0.84,y)$. Since

  $$h_0'=0.84\cdot (1-2\cdot 0.84^2y)\leq
  0.84\cdot (1-2\cdot 0.84^2\cdot 2.2)=-1.7678976\ldots <0~,$$
  the maximal value of $h$ is $h(0.84,2.2)=-0.02$, so
  $V\leq -0.02\ldots <0$.
    
  \end{proof}

\begin{proof}[Proof of Proposition~\ref{propintervals}]
It is true that
  $-\bar{q}_1=0.72\ldots <-\bar{q}_2=0.78\ldots <-\bar{q}_3=0.84\ldots$ 
(the values of $-\bar{q}_j$ for $j\leq 8$ can be found in~\cite{KoPRSE2}).
So we need to prove the inequality $-\bar{q}_{2k+2}<-\bar{q}_{2k+3}$
only for $k\geq 1$.
All statements
  of part (1) follow from

  $$\theta (q,x_*/q^3)=1+x_*/q^2+x_*^2/q^3+(x_*^3/q^3)\theta(q,x_*)=V(q,x_*/q^3)+
  (x_*^3/q^3)\theta(q,x_*)~,$$
  with $V(q,x_*/q^3)<0$, see Lemma~\ref{lmV}. Indeed, the partial theta function
satisfies the differential equation 

\begin{equation}\label{equpartder}2q\cdot \partial \theta /\partial q
=2x\cdot \partial \theta /\partial x+x^2\cdot \partial ^2\theta /\partial x^2~,
\end{equation}
so when
$q$ decreases from $0$ to $-1$, local minima go up and local maxima go down.
Thus if $x_*$ is a double zero of $\theta (\bar{q}_{2k+2},.)$ and
$\theta (q,x_*/q^3)<0$, then at the local minimum of $\theta$ on the interval
$(-1/q^{4k+8},-1/q^{4k+6})$ the value of $\theta$ is negative, so a double zero
of $\theta$ on this interval occurs for $q=\bar{q}_{2k+3}<\bar{q}_{2k+2}$. Hence
$-\bar{q}_{2k+2}<-\bar{q}_{2k+3}$.

The same reasoning allows to deduce part (2) from the equalities
and inequality
  $\theta (q,x_{\diamond}/q)=1+qx_{\diamond}\theta (q,x_{\diamond})=1>0$,
  see~(\ref{equqx}). 
  \end{proof}

\section{Proofs of Propositions~\protect\ref{propsecond}, 
  \protect\ref{propincr}, \protect\ref{prop2kincrease} and
  Lemma~\protect\ref{lmkgeq1}
  \protect\label{secprpropsecond}}

We prove first two lemmas which are of independent interest.
In order to avoid ambiguity we sometimes use the notation
$\theta (u,v)=\sum_{j=0}^{\infty}u^{j(j+1)/2}v^j$, so 
$(\partial \theta /\partial v)(q,x)=\sum_{j=0}^{\infty}jq^{j(j+1)/2}x^{j-1}$ etc.

\begin{lm}\label{lm1/2}
  For $q\in (0,1)$ and $x=-q^{-1/2}$, one has $\partial \theta /\partial q<0$.
\end{lm}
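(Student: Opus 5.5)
We compute $\partial\theta/\partial q$ termwise at $x=-q^{-1/2}$ and reduce its sign to the sign of two explicit $q$-series. Differentiating $\theta=\sum_{j\ge 0}q^{j(j+1)/2}x^j$ in $q$ and substituting $x=-q^{-1/2}$ gives the exponent $j(j+1)/2-1-j/2=j^2/2-1$. Hence, with $p:=q^{1/2}\in(0,1)$,
$$\frac{\partial\theta}{\partial q}\Big|_{x=-q^{-1/2}}=q^{-1}\sum_{j\ge1}(-1)^j\frac{j(j+1)}{2}\,p^{j^2}=\frac{q^{-1}}{2}\bigl(S_2(p)+S_1(p)\bigr),$$
where
$$S_2(p):=\sum_{j\ge1}(-1)^jj^2p^{j^2},\qquad S_1(p):=\sum_{j\ge1}(-1)^jj\,p^{j^2}.$$
So it suffices to show $S_2+S_1<0$ on $(0,1)$. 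We treat the two sums separately.

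\emph{The sum $S_2$.} We handle it through the Jacobi triple product. One has $\sum_{j\in\mathbb{Z}}(-1)^jp^{j^2}=\prod_{m\ge1}(1-p^{2m})(1-p^{2m-1})^2=:\vartheta(p)$; this is the product (\ref{equT}) specialised suitably, equivalently the formula for $\Theta$ with $x^2=-1$. Since the summand is even in $j$, we get $S_2(p)=\tfrac12\,p\,\vartheta'(p)$. Every factor of $\vartheta$ is positive and strictly decreasing in $p$, so $\vartheta'<0$ and therefore $S_2<0$ on $(0,1)$. This part is routine.

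\emph{The sum $S_1$.} This is the genuine partial-theta piece, and we split the range of $p$.
\begin{enumerate}
\item For $p$ not close to $1$ (say $q\le 0.95$, as in the proof of part (1) of Proposition~\ref{prop6}), we do not need the splitting at all. We bound $\sum_{j\ge1}(-1)^j\frac{j(j+1)}{2}p^{j^2}$ directly by a polynomial truncation, checked by computer algebra to be negative and bounded away from $0$. The tail is controlled by a geometric progression exactly as in Propositions~\ref{prop6} and~\ref{prop2.2}. Alternatively, pairing the terms $j=2k-1,2k$ shows each pair is negative as long as $2k/(2k-1)<p^{-(4k-1)}$, which holds for all $k$ when $p$ is small enough.
\item For $p$ close to $1$, we note that $S_2\to0$ (the Abel sum of $(-1)^jj^2$ is $0$), while $S_1\to-1/4$, the Abel sum of $(-1)^jj$. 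So the negativity must come from $S_1$, and we aim to prove $S_1(p)\le -c<0$ uniformly on $[p_0,1)$. The approach is to write $(-1)^jj\,p^{j^2}$ as $g(j)$ with $g(t)=t\,e^{-\varepsilon t^2}\cos(\pi t)$ and $\varepsilon=-\log p$, and apply the Euler--Maclaurin (or Boole) formula for alternating sums: $\sum_{j\ge1}(-1)^jh(j)=-\tfrac12h(1)\cdots$, with $h(t)=te^{-\varepsilon t^2}$. This should give $S_1=-\tfrac14+O(\varepsilon)$ with an explicit constant. The remainder involves $\int|h''|$, which is $O(\sqrt{\varepsilon})$ and can be made explicit.
\end{enumerate}
Combining the two regimes yields $S_1<0$ on $(0,1)$, hence $S_2+S_1<0$ and $\partial\theta/\partial q<0$ at $x=-q^{-1/2}$.

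The main obstacle is this uniform estimate for $S_1$ as $p\to1^-$. Termwise pairing fails there, and the two regimes must overlap. So the explicit error constant in the alternating Euler--Maclaurin bound has to be small enough that $-1/4+C\sqrt{\varepsilon}<0$ already at the $p_0$ where the numerical truncation check stops. If the constant is too large, the fallback is to push the computer check closer to $1$ using more terms of the truncated series.
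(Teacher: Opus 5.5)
Your decomposition is the paper's own argument, written out as series. Set $\nu(q):=\theta(q,-q^{-1/2})=\sum_{j\geq 0}(-1)^jq^{j^2/2}$. Then $\frac{q^{-1}}{2}S_2=\nu'(q)$ and $S_1=-q^{-1/2}\,(\partial\theta/\partial x)(q,-q^{-1/2})$. So your identity is just the chain rule $\nu'=\partial\theta/\partial q+\frac{1}{2}q^{-3/2}\,\partial\theta/\partial x$ used in the paper, and your treatment of $S_2$ via the monotone Jacobi product is the paper's step. What you missed is that $S_1<0$ needs no new work. It is the known inequality $\partial\theta/\partial x>0$ for $x>-q^{-1}$ (\cite[Part~(3) of Proposition~4.6]{KoPRSE2}), evaluated at $x=-q^{-1/2}>-q^{-1}$. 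This is how the paper closes the proof in one line.

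Your substitute proof of $S_1<0$ has a genuine gap as $p\to 1^-$, which is where you expected trouble. The claim $\int_0^\infty|h''|\,dt=O(\sqrt{\varepsilon})$ is false. With $w=\varepsilon t^2$, this integral equals the total variation of $w\mapsto e^{-w}(1-2w)$ on $[0,\infty)$, which is $1+4e^{-3/2}\approx 1.89$ for every $\varepsilon>0$. The second-order alternating Euler--Maclaurin (Boole) remainder is therefore only bounded by about $\frac14\cdot 1.89\approx 0.47>\frac14$, so it gives $S_1<0$ for no $p$ at all. Pushing the numerical check toward $p=1$ cannot replace a bound that is uniform up to $1$.

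The repair is to go one order further. Since $h(0)=h''(0)=0$, $h'(0)=1$, and the Euler polynomial $E_2(x)=x^2-x$ vanishes at $0$ with $|E_2|\leq\frac14$ on $[0,1]$, Boole's formula gives $S_1=-\frac14+R_3$. Here $|R_3|\leq\frac1{16}\int_0^\infty|h'''|\,dt=\frac{\sqrt{\varepsilon}}{16}\int_0^\infty|g'''(u)|\,du\approx 0.31\sqrt{\varepsilon}$, with $g(u)=ue^{-u^2}$. This yields $S_1<0$ for $p>0.53$. Your pairing argument works whenever $p^3<1/2$, so it covers the rest, and no computer check is needed. Note also that a truncation check cannot be ``bounded away from $0$'' near $p=0$, where the sum behaves like $-p$; the pairing has to handle that end.

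With this fix, your route is a self-contained proof of the special case of \cite{KoPRSE2} that the paper imports. Without it, the argument for $S_1$ is incomplete.
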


\begin{proof}
  One considers the function

  $$\nu (q):=\theta (q,-q^{-1/2})=
  1-q^{-1/2}+q^2-q^{-9/2}+q^8-\cdots =\sum _{k=0}^{\infty}(-1)^kq^{k^2/2}~.$$
  This
  function is decreasing. Indeed, it is shown in
  \cite[Chapter~1, Problem~56]{PoSz} that

 $$   1+2\sum _{k=1}^{\infty}(-1)^kq^{k^2}=\prod_{k=1}^{\infty}((1-q^k)/(1+q^k))~,$$
  where all factors are decreasing functions. On the other hand

  $$0>\nu '(q)=\partial \theta (q,x)/\partial q|_{x=-q^{-1/2}}+
  ((\partial \theta (q,x)/\partial x)|_{x=-q^{-1/2}})\cdot (1/2q^{3/2})~.$$
    The latter term is positive for $x\geq -q^{-1/2}$, because
    $\partial \theta /\partial x>0$ for $x>-q^{-1}$ hence for
    $x=-q^{-1/2}$, see
    \cite[Part~(3) of Proposition~4.6]{KoPRSE2}. Thus 
$\partial \theta (q,x)/\partial q|_{x=-q^{-1/2}}<0$. 
\end{proof}

\begin{lm}
  The following two equalities hold true:

  \begin{equation}\label{equ2equ}
    x(\partial ^2\theta /\partial v^2)(q,x)=
  2q^2(\partial \theta /\partial u)(q,qx)~~~\, {\rm and}~~~\, 
  x^2(\partial ^4\theta /\partial v^4)(q,x)=
  4q^5(\partial ^2\theta /\partial u^2)(q,q^2x)~.
  \end{equation}
\end{lm}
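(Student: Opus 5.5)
Both identities follow from comparing coefficients of the power series in $x$; the functional equation (\ref{equqx}) is not needed. We write $\theta(u,v)=\sum_{j\geq 0}u^{j(j+1)/2}v^j$ and differentiate term by term, which is legitimate because for fixed $|q|<1$ the series and all its derivatives converge absolutely and locally uniformly in $x$.

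For the first equality, the left-hand side is
$$x\,\frac{\partial^2\theta}{\partial v^2}(q,x)=\sum_{j\geq 2}j(j-1)q^{j(j+1)/2}x^{j-1}.$$
For the right-hand side we use $\partial\theta/\partial u=\sum_{j\geq 1}\frac{j(j+1)}{2}u^{j(j+1)/2-1}v^j$. Substituting $u=q$, $v=qx$ gives
$$2q^2\,\frac{\partial\theta}{\partial u}(q,qx)=\sum_{j\geq 1}j(j+1)q^{j(j+1)/2+1+j}x^j.$$
We shift the index $j\mapsto j-1$. The generic term becomes $(j-1)j\,q^{(j-1)j/2+j}x^{j-1}$, and the exponent of $q$ is $(j-1)j/2+j=j(j+1)/2$. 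This matches the left-hand side term by term. In both sums the summation starts at $j=2$, since the $j=1$ terms vanish.

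For the second equality, the left-hand side is
$$x^2\,\frac{\partial^4\theta}{\partial v^4}(q,x)=\sum_{j\geq 4}j(j-1)(j-2)(j-3)q^{j(j+1)/2}x^{j-2}.$$
For the right-hand side we compute
$$\frac{\partial^2\theta}{\partial u^2}=\sum_j \frac{j(j+1)}{2}\Bigl(\frac{j(j+1)}{2}-1\Bigr)u^{j(j+1)/2-2}v^j,$$
and note that $\frac{j(j+1)}{2}-1=\frac{(j+2)(j-1)}{2}$. Substituting $u=q$, $v=q^2x$ and multiplying by $4q^5$ gives
$$\sum_j j(j+1)(j+2)(j-1)\,q^{j(j+1)/2+3+2j}x^j.$$
Shifting $j\mapsto j-2$ turns the coefficient into $(j-2)(j-1)j(j-3)$, and the exponent of $q$ into $(j-2)(j-1)/2+3+2(j-2)=(j^2+j)/2$. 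Both sides therefore agree.

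The only points that need care are the exponent bookkeeping and the factorization $\frac{j(j+1)}{2}-1=\frac{(j+2)(j-1)}{2}$; there is no analytic difficulty.
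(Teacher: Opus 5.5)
Your proof is correct and follows essentially the same route as the paper, which also compares the series term by term. The paper matches the $j$th monomial $A_j=u^{j(j+1)/2}v^j$ on the left with $A_{j-1}$ (resp.\ $A_{j-2}$) on the right, which is exactly your index shift $j\mapsto j-1$ (resp.\ $j\mapsto j-2$), and your exponent bookkeeping and the factorization $\frac{j(j+1)}{2}-1=\frac{(j+2)(j-1)}{2}$ check out.
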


\begin{proof}
  We set $A_j(u,v):=u^{j(j+1)/2}v^j$. One checks directly that
  for $j\in \mathbb{N}$, 

  $$\begin{array}{lc}
    x(\partial^2A_j/\partial v^2)(q,x)=j(j-1)q^jA_{j-1}(q,x)&{\rm and}\\ \\
    (\partial A_{j-1}/\partial u)(q,qx)=(j(j-1)/2)q^{j-2}A_{j-1}(q,x)~,&{\rm so}
    \\ \\ 2q^2(\partial A_{j-1}/\partial u)(q,qx)=j(j-1)q^jA_{j-1}(q,x)
  \end{array}$$
  which proves the first equality of the lemma. Next,

  $$\begin{array}{lc}
    x^2(\partial ^4A_j/\partial v^4)(q,x)=j(j-1)(j-2)(j-3)q^{j(j+1)/2}x^{j-2}
    &{\rm and}\\ \\ 
    (\partial ^2 A_{j-2}/\partial u^2)(q,x)=\frac{(j-2)(j-1)}{2}\cdot \left( 
    \frac{(j-2)(j-1)}{2}-1\right) \cdot q^{((j-2)(j-1)/2)-2}x^{j-2}~,&{\rm so}\\ \\
      4q^5(\partial^2 A_{j-2}/\partial u^2)(q,q^2x)=j(j-1)(j-2)(j-3)q^{j(j+1)/2}x^{j-2}
  \end{array}$$
  from which the second equality follows.
    
\end{proof}

\begin{proof}[Proof of Proposition~\ref{propsecond}]
  By the first of equalities (\ref{equ2equ}) the function
  $\partial^2 \theta /\partial x^2$ is positive for $x=-q^{-3/2}$, see
Lemma~\ref{lm1/2}. 
Suppose first that
$q>0$ is sufficiently small. Then the function $\theta (q,.)$ belongs to the
Laguerre-P\'olya class $\mathcal{LP}I$
(see \cite[part~(3) of Theorem~1]{KoDBAN14}) and its derivatives w.r.t. $x$ 
  of any order are also in $\mathcal{LP}I$, so all zeros of
  $\partial^2 \theta /\partial x^2$ are real negative.

  For $q>0$ close to $0$ and for $j=0$, $1$ and $2$,
  the rightmost of the zeros of $(\partial ^j\theta /\partial x^j)(q,.)$ are
  equivalent respectively to $-q^{-1}$, $-q^{-2}/2$ and $-q^{-3}/3<-q^{-3/2}$.
  Hence for $q>0$ sufficiently small, all zeros of
  $\partial^2 \theta /\partial x^2$ are smaller than $-q^{-3/2}$.

  We show that as $q$ increases from $0$ to $1$, the function
  $\partial^2 \theta /\partial x^2$ can lose real zeros, but not acquire such.
  This fact together with $\partial^2 \theta /\partial x^2|_{x=-q^{-3/2}}>0$ for
  $q>0$ sufficiently small 
  implies then that it has no real zeros for $x\geq -q^{-3/2}$.

  All coefficients of $\partial^2 \theta /\partial x^2$ are positive,
  so it has no positive zeros. Suppose that for some $q_*\in (0,1)$, the
  function $\partial^2 \theta /\partial x^2$ has a zero
  $x_*\in [-q_*^{-3/2},0)=:I_*$.
  Suppose that $q_*$ is the smallest value of $q$ for which this happens.
  Then this zero cannot be of odd multiplicity. Indeed, if this is the case,
  then there exists an interval $[\alpha ,\beta ]\subset I_*$,
  $\alpha <x_*<\beta$, such that the quantities 

  $$(\partial^2 \theta /\partial x^2)(q_*,\alpha )~~~\, {\rm and}~~~\,
  (\partial^2 \theta /\partial x^2)(q_*,\beta )$$
  are non-zero and of opposite signs. But then by continuity this is true also 
  for $q<q_*$ sufficiently close to $q_*$ which contradicts the minimality of
  $q_*$.

  Suppose that the zero $x_*$ is of even multiplicity; if there are several such zeros, we choose the rightmost one. Then given that
  $(\partial^2 \theta /\partial x^2)(q,0)=2q^3>0$, the
  function $\partial^2 \theta /\partial x^2$ has a local minimum at $x_*$.
  Two successive defferentiations w.r.t. $x$ of equality (\ref{equpartder}) 
  yield the equality

  $$2q\cdot \partial ^3\theta /\partial q\partial x^2=6\cdot
  \partial ^2\theta /\partial x^2
  +6x\cdot \partial ^3\theta /\partial x^3
  +x^2\cdot \partial ^4\theta /\partial x^4~.$$
  For $q<q_*$ and close to $q_*$, at the minimum of the function
  $\partial^2 \theta /\partial x^2$ on the interval $I_*$ one has

  $$\partial ^2\theta /\partial x^2>0~,~~~\, 
  \partial ^3\theta /\partial x^3=0~~~\, {\rm and}~~~\,
  \partial ^4\theta /\partial x^4\geq 0~;$$
  the latter inequality holds true in some rectangle $[q_*-\varepsilon ,q_*+\varepsilon ]\times [x_*-\varepsilon ,x_*+\varepsilon ]$. Hence $\partial ^3\theta /\partial q\partial x^2>0$, i.~e. the minimal
  value of
  $\partial ^2\theta /\partial x^2$ on $I_*$, increases as $q$ increases and one
  cannot have $(\partial ^2\theta /\partial x^2)(q_*,x_*)=0$ which
  contradiction proves the first claim of the proposition. The second claim is
  an immediate corollary of the first of equalities (\ref{equ2equ}) (for $x=0$,
  one has $\partial \theta /\partial q=0$).
  
  \end{proof}

\begin{proof}[Proof of Lemma~\ref{lmkgeq1}]
  Suppose that $k\geq 1$. One has

  $$\varphi_k'=\partial \theta /\partial q+
  (1-k)\cdot \partial \theta /\partial x|_{x=-q^{k-1}}\cdot q^{k-2}~.$$
  The first term in
  the right-hand side is negative, see Proposition~\ref{propsecond}.
  Having in mind that
  $\partial \theta /\partial x>0$ for $x\in (-q^{-1},\infty )$,
  see \cite[Part~(3) of Proposition~4.6]{KoPRSE2}, one concludes that the second
  term is non-positive for $q>0$ (negative for $k>1$ and zero for $k=1$).
  For $q=0$ and $k>1$, one deduces from
  the series of $\theta$ that $\varphi_k'(0)=0$; one finds that
  $\varphi_1'(0)=-1$.  

  For $k=1/2$, the lemma follows from $\nu '(q)<0$, see the proof of
  Lemma~\ref{lm1/2}; in this case one has
  $\varphi_{1/2}'(0)=-\infty$.
  
  \end{proof}

\begin{proof}[Proof of Proposition~\ref{propincr}]
  We use the Jacobi triple product, see (\ref{equT}), and the equality
  (\ref{equthetaG}). Set $j_0:=[a]$
  (the integer part of~$a$) and $s:=\sum_{j=1}^{j_0}(a-j)=j_0(2a-j_0-1)/2>0$. 
  Hence

  $$\begin{array}{rcl}\Theta^0:=\Theta^*(q,-q^{-a})&=&
    \prod_{j=1}^{\infty}(1-q^j)(1-q^{j-a})(1-q^{j+a-1})\\ \\
      &=&-q^{-s}\prod_{j=1}^{\infty}((1-q^j)(1-q^{j+a-1}))
      \prod_{j=j_0+1}^{\infty}(1-q^{j-a})\prod_{j=1}^{j_0}(1-q^{a-j})~.\end{array}$$
  The factors in the three products are positive and decreasing
  in $q\in (0,1)$, so the function $q^s\Theta^0$ is a minus product of
  positive and strictly decreasing (from $1$ to $0$)
  functions hence $\Theta^0$ increases
  from $-\infty$ to~$0$. At the same time $-G(q,-q^{-a})=(1-\varphi_a)(q)$
  is a function which increases
  from $0$ to $1/2$. Thus the sum $\theta (q,-q^{-a})=\Theta^0-G(q,-q^{-a})$
  strictly increases from $-\infty$ to $1/2$ and hence equals $0$
  exactly for one value of $q\in (0,1)$.

  If $a\in \mathbb{N}$, then there is a zero factor in $\Theta^0$ and
  $\theta (q,-q^{-a})=-G(q,-q^{-a})=q^a\varphi_{a+1}>0$. If $a>0$, 
  $a\not\in K^{\dagger}$, then there is an even number
  of negative factors in $\Theta^*(q,-q^{-a})$, so $\Theta^*(q,-q^{-a})>0$
  and $-G(q,-q^{-a})>0$ hence $\theta (q,-q^{-a})>0$.
  
  \end{proof}

\begin{proof}[Proof of Proposition~\ref{prop2kincrease}]
  Consider the zeros $\xi_{2k}$ and $\xi_{2k-1}$ as functions in
  $q\in (0,\tilde{q}_k]$. The union of their graphs is a smooth curve in
  the plane of the variables $(q,x)$, see \cite[Theorem~3]{KoSe}. 
Their derivatives
tend to $+\infty$ and $-\infty$ respectively as $q\rightarrow \tilde{q}_k^-$.
For $q=\tilde{q}_k$,
there exists a unique curve $x=-q^{-a_1}$, $a_1>0$, 
such that
$\theta (\tilde{q}_k,-(\tilde{q}_k)^{-a_1})=0$, see
Proposition~\ref{propincr}. As $q$ increases in a small neighbourhood of
$\tilde{q}_k$, the values of
$\theta (q,-q^{-a_1})$ pass from negative to positive.

Suppose that there
  exists $q_{\natural}\in (0,\tilde{q}_k]$ such that $\xi_{2k}'(q_{\natural})=0$.  
Choose the largest
possible such number $q_{\natural}$, so $\xi_{2k}'>0$ for $q\in (q_{\natural},\tilde{q}_k)$.
There exists a unique curve $x=-q^{-a_2}$, $a_2>0$, 
such that $\theta (q_{\natural},-(q_{\natural})^{-a_2})=0$. For $q\in (q_{\natural},\tilde{q}_k)$ close
to $q_{\natural}$, one has $-q^{-a_2}>\xi_{2k}$. But then 
as $q$ increases up from $q_{\natural}$, the values of
$\theta (q,-q^{-a_2})$ pass from positive to negative which contradicts
Proposition~\ref{propincr}. Besides, 
$\theta (q,-q^{-a_2})\rightarrow -\infty$ as $q\rightarrow 0^+$, so the curve
$x=-q^{-a_2}$ intersects the zero set of $\theta$ at least thrice, for at least
one value of $q$ larger, for at least one value of $q$ smaller than $q_{\natural}$
and for $q=q_{\natural}$.
Hence such a number $q_{\natural}$ does not exist and
one has $\xi_{2k}'>0$ on $(0,\tilde{q}_k)$.
\end{proof}


\begin{thebibliography}{40}
\bibitem{AnBe} G. E. Andrews, B. C. Berndt,  
Ramanujan's lost notebook. Part II. Springer, NY, 2009.
\bibitem{BeKi} B. C. Berndt, B. Kim, 
Asymptotic expansions of certain partial theta functions. 
Proc. Amer. Math. Soc. 139:11 (2011), 3779-3788.
\bibitem{BFM} K.~Bringmann, A.~Folsom and A.~Milas, 
Asymptotic behavior 
of partial and false theta functions arising from Jacobi forms and regularized 
characters. 
J. Math. Phys. 58:1 (2017), 011702, 19 pp.
\bibitem{BrFoRh} K. Bringmann, A. Folsom, R. C. Rhoades, 
Partial theta functions 
and mock modular forms as $q$-hypergeometric series, 
Ramanujan J. 29:1-3 (2012), 
295-310, 
http://arxiv.org/abs/1109.6560
\bibitem{CMW} T.~Creutzig, A.~Milas and S.~Wood, 
On regularised quantum 
dimensions of the singlet vertex operator algebra and false theta functions. 
Int. Math. Res. Not. 5 (2017), 1390-1432.
\bibitem{FGM} R. Flores and J. Gonz\'alez-Meneses,
  On the growth of Artin-Tits monoids and the partial theta function. 
J. Combin. Theory Ser. A 190 (2022), Paper No. 105623, 39 pp.
\bibitem{Ha} G.~H.~Hardy, 
On the zeros of a class of integral functions, 
Messenger of Mathematics, 34 (1904), 97-101.
\bibitem{Hu} J. I. Hutchinson, 
On a remarkable class of entire functions, 
Trans. Amer. Math. Soc. 25 (1923), 325-332.
\bibitem{KaLoVi} O.M. Katkova, T. Lobova and A.M. Vishnyakova, 
On power series 
having sections with only real zeros. 
Comput. Methods Funct. Theory 3:2 (2003), 
425-441.
\bibitem{Ka} B. Katsnelson, On summation of the Taylor series 
of the function 
$1/(1-z)$ by the theta summation method,
Complex analysis and dynamical systems VI. Part 2, 141–157.
Contemp. Math., 667
Israel Math. Conf. Proc.
American Mathematical Society, Providence, RI, 2016
\bibitem{KoFAA} V.P. Kostov, 
On the multiple zeros of a partial theta function, 
Funct. Anal. Appl. 
(Russian version 50, No. 2 (2016) 84-88, 
English version 50, No. 2 (2016) 153-156).
\bibitem{KoDBAN14} V.P. Kostov, A property of a partial theta function,
  Comptes Rendus Acad. Sci. 
  Bulgare 67, No. 10 (2014) 1319-1326.
\bibitem{KoDBAN15} V.P. Kostov, Asymptotic expansions of zeros of a partial
  theta function, Comptes Rendus Acad. Sci. Bulgare 68, No. 4 (2015) 419-426.
\bibitem{KoFAA19} V.P. Kostov, On the complex conjugate zeros of the partial
  theta function, Funct. 
  Anal. Appl. (English version 2019, 53:2, 149-152,
  Russian version 2019, 53:2, 87-91).
\bibitem{KoBSM1} V.P. Kostov, 
On the zeros of a partial theta function, 
Bull. 
Sci. Math. 137:8 (2013), 1018-1030.
\bibitem{KoBSM2}  V.P. Kostov, On the double zeros of a partial theta function, 
Bull. Sci. Math. 140, No. 4 (2016) 98-111.
\bibitem{KoPRSE2}V.P. Kostov,  
On a partial theta function and its spectrum, 
Proc. Royal Soc. Edinb. A 146:3 (2016), 609-623.
\bibitem{KoPMD} V.P. Kostov, 
A domain containing all zeros of the partial 
theta function, 
Publ. Math. Debrecen 93:1-2 (2018), 189-203.
\bibitem{KoAM} V.P. Kostov, 
A separation in modulus property of the 
zeros of a partial theta function, 
Analysis Mathematica 44:4 (2018), 
501-519.
\bibitem{KoSe}V.P. Kostov, On the zero set of the partial theta function,
  Serdica Math. J. 45 (2019), 225-258.
\bibitem{KoMatStud} V.P. Kostov, A domain free of the zeros of the
  partial theta function.
  Mat. Stud. 58 (2022), no. 2, 142-158.
\bibitem{KoAnn24} V.P. Kostov, No zeros of the partial theta function in the
  unit disk, Annual of Sofia University ``St. Kliment Ohridski'', Faculty
  of Mathematics and Informatics 111 (2024) 129-137.
  DOI: 10.60063/gsu.fmi.111.129-137
\bibitem{KoArxiv} V.P. Kostov, On the location of the complex conjugate zeros
  of the partial theta function, Serdica Math. J. (to appear) arXiv:2501.15866. 
\bibitem{KoSh} V.P. Kostov and B. Shapiro, 
Hardy-Petrovitch-Hutchinson's 
problem and partial theta function, 
Duke Math. J. 162:5 (2013), 825-861. 
\bibitem{LuSa} D.S. Lubinsky, E. Saff, Convergence of Pad\'e approximants
  of partial theta functions and the Rogers-Szeg\H{o}
polynomials, Constructive Approximation, 3 (1987), 331-361.
\bibitem{EM} E.T. Mortenson, On the dual nature of partial theta functions
  and Appell-Lerch sums, Adv. Math. 264 (2014), 236-260.
\bibitem{Ost} I.~V.~Ostrovskii, 
On zero distribution of sections and tails 
of power series, 
Israel Math. Conf. Proceedings, 15 (2001), 297-310.
\bibitem{Pe} M.~Petrovitch, 
Une classe remarquable de s\'eries enti\`eres, 
Atti del IV Congresso Internationale dei Matematici, Rome (Ser. 1), 2 
(1908), 36-43. 
\bibitem{PoSz} G. P\'{o}lya, G. Szeg\H{o}, 
  Problems and Theorems in Analysis, Vol. 1, Springer, Heidelberg 1976.
\bibitem{Pr} T.~Prellberg, The combinatorics of the leading root of the 
partial theta function, http://arxiv.org/pdf/1210.0095.pdf
\bibitem{So} A.~Sokal, 
The leading root of the partial theta function, 
Adv. Math. 229:5 (2012), 2603-2621. 
arXiv:1106.1003.
\bibitem{Sun} L. H. Sun, An extension of the Andrews-Warnaar partial theta
  function identity. 
Adv. in Appl. Math. 115 (2020), 101985, 20 pp.
\bibitem{WPG} B.~Walter, G.~Perfetto and A.~Gambassi,
  Thermodynamic phases in first detected return times
  of quantum many-body systems, Physical Review A 111, L040202 (2025).
\bibitem{WM} J. Wang and X. Ma, On the Andrews-Warnaar identities
  for partial theta functions. Adv. in Appl. Math. 97 (2018), 36-53.
\bibitem{Wa} S. O. Warnaar, 
Partial theta functions. I. Beyond 
the lost notebook, 
Proc. London Math. Soc. (3) 87:2 (2003), 363-395.
\bibitem{Wei} C. Wei, Partial theta function identities from Wang and Ma's
  conjecture.
J. Difference Equ. Appl. 26 (2020), no. 4, 532-539.
\end{thebibliography}
\end{document}